\newtheorem{theorem}{Theorem}[section]
\newtheorem{lemma}[theorem]{Lemma}
\newtheorem{proposition}[theorem]{Proposition}
\newtheorem{corollary}[theorem]{Corollary}
\newcommand{\tref}[1]{Theorem~\textup{\ref{thm:#1}}}
\newcommand{\pref}[1]{Proposition~\textup{\ref{prop:#1}}}
\newcommand{\cref}[1]{Corollary~\textup{\ref{cor:#1}}}
\newcommand{\eref}[1]{Equation~\textup{\ref{eq:#1}}}
\newcommand{\lref}[1]{Lemma~\textup{\ref{lem:#1}}}
\newcommand{\Z}{\mathbb Z}
\newcommand{\lcm}{\textrm{lcm}}
\newcommand{\s}{\sigma}
\newcommand{\os}{\hat{\sigma}}
\newcommand{\orho}{\overline{\rho}}
\newcommand{\G}{\Gamma}
\newcommand{\mix}{\diamond}
\newcommand{\comix}{\boxempty}
\newcommand{\comment}[1]{}
\newcommand{\eps}{1}
\newcommand{\D}{\Delta}
\newcommand{\Core}{\operatorname{Core}}
\newcommand{\calP}{\mathcal P}
\newcommand{\calQ}{\mathcal Q}
\begin{document}

\title{Classification of tight regular polyhedra}

\author{Gabe Cunningham \\
University of Massachusetts Boston\\
Boston, Massachusetts 02125,  USA \\
and \\
Daniel Pellicer\\
National University of Mexico\\
Morelia, Michoacan, Mexico
}

\date{ \today }
\maketitle

\begin{abstract}
A regular polyhedron of type $\{p, q\}$ has at least $2pq$ flags, and it is called \emph{tight}
if it has exactly $2pq$ flags. The values of $p$ and $q$ for which there exist tight orientably regular polyhedra were previously known.
We determine for which values of $p$ and $q$ there is a tight non-orientably regular polyhedron of type $\{p, q\}$.
Furthermore, we completely classify tight regular polyhedra in terms of their automorphism groups.

\vskip.1in
\medskip
\noindent
Key Words: abstract regular polytope, tight polyhedron, tight polytope, flat polyhedron, flat polytope.

\medskip
\noindent
AMS Subject Classification (2000):  Primary: 52B15.  Secondary:  51M20, 05E18, 52B70.

\end{abstract}

\section{Introduction}\label{s:introd}
\emph{Abstract polyhedra} are combinatorial objects that generalize the face-lattice of convex polyhedra. Those possessing the highest degree of symmetry are called \emph{regular polyhedra}.
The face-lattices of platonic solids, known since antiquity, are all regular in this sense, and there
are infinitely many more regular abstract polyhedra.

In \cite{smallest-regular}, Marston Conder
introduced the idea of a \emph{tight} regular polyhedron: any regular polyhedron
with $q$-valent vertices and $p$-gons as faces has at least $2pq$ automorphisms, and the polyhedron is called tight if it has
precisely this number of automorphisms. Tight polyhedra were also studied by the first author
in \cite{tight-polytopes}. In \cite{tight2}, Conder and the first author completely characterized
the values of $p$ and $q$ of tight orientably regular polyhedra, and further generalized to
higher-dimensional analogues.

In the present paper we characterize the degree $q$ of the vertices and the number $p$ of edges in a face of tight non-orientably regular polyhedra. We also take the work from \cite{tight2} one
step further in the following direction. For many values of $p$ and $q$ there are multiple non-isomorphic tight
orientably regular polyhedra with $p$-gonal faces and $q$-valent vertices; here we determine the number of such polyhedra
and describe their automorphism groups.

One of our main results is the following:

\begin{theorem}\label{t:schltypes}
There is a tight regular polyhedron of type $\{p, q\}$ (that is, with $p$-gonal faces and $q$-valent vertices) if and only if one of the following is true:
\begin{enumerate}
\item $p$ and $q$ are both even.
\item $p$ is odd and $q$ is an even divisor of $2p$.
\item $q$ is odd and $p$ is an even divisor of $2q$.
\item $p = 4$ and $q$ is an odd multiple of $3$.
\item $q = 4$ and $p$ is an odd multiple of $3$.
\end{enumerate}
In the second and third cases, there is one such polyhedron up to isomorphism, and it is orientably regular.
In the fourth and fifth cases, there is one such polyhedron up to isomorphism, and it is non-orientably regular.
\end{theorem}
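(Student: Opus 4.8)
Here is the strategy I would follow. Write $\Gamma=\langle\rho_0,\rho_1,\rho_2\rangle$ for the automorphism group; then $\langle\rho_0,\rho_1\rangle$ is dihedral of order $2p$, $\langle\rho_1,\rho_2\rangle$ is dihedral of order $2q$, they meet in $\langle\rho_1\rangle$, and tightness is exactly the statement $\Gamma=\langle\rho_0,\rho_1\rangle\langle\rho_1,\rho_2\rangle$, equivalently $\langle\rho_0,\rho_1\rangle$ has index $q$. Put $\sigma=\rho_0\rho_1$, $\tau=\rho_1\rho_2$; the rotation subgroup $\Gamma^+=\langle\sigma,\tau\rangle$ has index $1$ or $2$, and $\mathcal P$ is orientably regular iff the index is $2$. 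First I would invoke \cite{tight2}: the orientably regular tight polyhedra are exactly those of types (a), (b), (c) (and, by an easier version of the analysis below, there is a unique one in cases (b), (c)). Next, duality (swapping $\rho_0\leftrightarrow\rho_2$, $p\leftrightarrow q$) shows $\{p,q\}$ admits a tight regular polyhedron iff $\{q,p\}$ does and preserves (non-)orientability, consistent with the symmetry of (a)--(e). Finally, if $p,q$ are both odd then a tight $\Gamma$ has order $2pq$ with $pq$ odd, so its set of odd-order elements is a normal subgroup of index $2$ onto whose nontrivial element every involution $\rho_i$ maps; hence $\rho_i\mapsto 1\in\mathbb Z/2$ is a well-defined epimorphism, $\mathcal P$ is orientably regular, and cases (b)/(c) then force one of $p,q$ even --- contradiction. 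So at least one of $p,q$ is even, and the whole theorem reduces to the following claim plus \cite{tight2} and duality: \emph{a tight non-orientably regular polyhedron of type $\{p,q\}$ with $p$ even and $q$ odd exists only for $p=4$ and $3\mid q$, and then it is unique (and such a polyhedron does exist).}

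To attack this claim, write $p=2^am$ with $m$ odd, $a\ge1$. Then $\sigma^m$ has order $2^a$ and is inverted by $\rho_1$, so $\langle\sigma^m,\rho_1\rangle$ is dihedral of order $2^{a+1}$, which is the full $2$-part of $|\Gamma|=2pq=2^{a+1}(mq)$; thus $\Gamma$ has dihedral Sylow $2$-subgroups. By the Gorenstein--Walter theorem, $\Gamma/O(\Gamma)$ (with $O(\Gamma)$ the largest normal subgroup of odd order) is (i) dihedral of order $2^{a+1}$, (ii) $A_7$, or (iii) a group between $\mathrm{PSL}_2(\ell)$ and $\mathrm{P\Gamma L}_2(\ell)$ for some odd prime power $\ell$. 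Case (i) is ruled out at once: since $q$ is odd, the image of $\tau$ in the $2$-group $\Gamma/O(\Gamma)$ is trivial, so $\Gamma^+/O(\Gamma)=\langle\bar\sigma\rangle$ is cyclic; but non-orientability gives $\Gamma^+=\Gamma$, while a dihedral group of order $\ge4$ is not cyclic. (An alternative to Gorenstein--Walter here is to study the transitive action of $\Gamma$ on the $q$ cosets of $\langle\rho_0,\rho_1\rangle$ and use the classification of the $2$-transitive groups that can occur; either way a classification-level input enters.)

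The technical heart --- and the step I expect to be hardest --- is eliminating cases (ii), (iii) and pinning $\Gamma/O(\Gamma)$ down. Here one uses that $\Gamma/O(\Gamma)=\langle\bar\sigma,\bar\tau\rangle$ with $(\bar\sigma\bar\tau)^2=1$, that $\bar\sigma$ has order $2^a\cdot d$ for some odd $d\mid m$ (since $\langle\sigma\rangle\cap O(\Gamma)$ has odd order dividing $m$) and $\bar\tau$ has odd order dividing $q$, that $\bar\sigma$ cannot have order $2$ (else $\Gamma/O(\Gamma)=\langle\bar\sigma,\bar\sigma\bar\tau\rangle$ would be dihedral), and that the image of $\tau$ has order at least $|\Gamma/O(\Gamma)|/2^{a+1}$ yet at most the exponent of $\Gamma/O(\Gamma)$. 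Running these constraints against the list of groups with dihedral Sylow $2$-subgroups forces $\Gamma/O(\Gamma)\cong S_4\cong\mathrm{PGL}_2(3)$; in particular $2^{a+1}=8$, and since the only way an order-$4$ and an odd-order element with involutory product generate $S_4$ is as a $4$-cycle and a $3$-cycle, we get $3\mid q$. It remains to exclude $m>1$: then $\sigma^m$ is an order-$4$ element projecting to a $4$-cycle and centralizing the nontrivial element $\sigma^4$ of $O(\Gamma)$, and by analysing the extension $1\to O(\Gamma)\to\Gamma\to S_4\to1$ --- noting $O(\Gamma)=\langle\sigma^4\rangle\langle\tau^3\rangle$ and that $S_4$ acts on the abelianization of $O(\Gamma)$ only through its sign quotient --- one shows no element of order exactly $4m$ can project to a $4$-cycle, so $\sigma^4=1$, i.e.\ $m=1$ and $p=4$. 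I anticipate this extension analysis is where most of the labor goes.

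Finally, for construction and uniqueness: with $p=4$ and $q=3k$ ($k$ odd) the argument forces $O(\Gamma)=\langle\tau^3\rangle$ cyclic of order $k$ with $S_4$ acting on it by inversion through its sign quotient, which determines $\Gamma$ and the generating triple up to isomorphism (so the polyhedron is unique); one then checks directly that this $\Gamma$ is a string C-group of type $\{4,3k\}$ with the intersection property --- concretely it is a $C_k$-cover of the hemicube $S_4$, and for $k=1$ it is the hemicube itself --- giving existence, and case (e) follows by duality. Assembling: (a), (b), (c) supply the orientable families via \cite{tight2}, (d), (e) supply the non-orientable ones just built; conversely any tight regular polyhedron is either orientably regular, hence of type (a)/(b)/(c) by \cite{tight2}, or non-orientably regular, hence of type (a) (both even), or (d)/(e) by the claim and duality, while both $p,q$ odd is impossible; so the types are exactly (a)--(e). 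For the uniqueness/orientability statements: in case (b) one has $p$ odd and $q\ne4$ (as $q\mid2p$ with $p$ odd excludes $q=4$), so none of (a),(d),(e) holds and there is no non-orientable tight polyhedron, leaving the unique orientable one of \cite{tight2}; case (c) is dual. In cases (d),(e) the "$4$" coordinate does not divide twice the other (odd) coordinate, so no orientable tight polyhedron exists, and the unique tight polyhedron is the non-orientable one constructed above.
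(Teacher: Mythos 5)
Your reduction is sound: quoting \cite{tight2} for the orientable half, using duality, excluding $p,q$ both odd, and isolating the key claim (a tight non-orientably regular polyhedron with $p$ even and $q$ odd forces $p=4$, $3\mid q$, with uniqueness and existence) does assemble into the theorem, and your route to that claim --- dihedral Sylow $2$-subgroups, Gorenstein--Walter, then extension analysis over $S_4$ --- is genuinely different from the paper, which stays elementary: it shows every such group is $\D(p,q)_{(i,j,a,b)}$ (\tref{second-rel}), pins down the parameters by relation manipulation and normality of $\langle \s_1^4\rangle$ and $\langle\s_2^6\rangle$ (\lref{s14-s26-normal}, \tref{parameters}), checks the base cases $\{8,3\},\{8,6\},\{4,6\}$ with GAP/the Atlas, and gets existence from covers together with \cite[Thm.~5.1]{tight2}.

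However, the heart of your argument is asserted rather than proved, and at least one asserted ingredient is not justified as stated. First, ``running these constraints against the list \dots forces $\Gamma/O(\Gamma)\cong S_4$'' is the crux and is not carried out; it requires checking $A_7$ and every group between $\mathrm{PSL}_2(\ell)$ and $\mathrm{P\Gamma L}_2(\ell)$ against the tightness bound $|\Gamma/O(\Gamma)|\le 2\,o(\overline\sigma)\,o(\overline\tau)$ (plausible, but a real case analysis, including the field-automorphism subgroups). Second, and more seriously, your exclusion of $m>1$ rests on the claim that $S_4$ acts on the abelianization of $O(\Gamma)$ only through its sign quotient. That is automatic only if this abelianization is cyclic (so that its automorphism group is abelian); you have not shown this, and $O(\Gamma)=\langle\sigma^4\rangle\langle\tau^3\rangle$ is a priori only a product set whose structure you do not control, e.g.\ when $\gcd(m,q/3)>1$. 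Even granting the sign-action claim, the step ``no element of order exactly $4m$ can project to a $4$-cycle'' is not derived; what is really needed is that conjugation (by $\s_2$, say) inverts $\s_1^4$, which is exactly what the paper extracts from the defining relations in \lref{s14-s26-normal} and which your setup gives you no access to. Third, uniqueness for $p=4$, $q=3k$ is asserted via ``the extension data determines $\Gamma$ and the generating triple up to isomorphism''; since $3$ may divide $k$, triviality of the relevant $H^2(S_4,C_k)$ and conjugacy of generating triples are not automatic and need an argument (the paper gets uniqueness for free because the presentation parameters $(i,j,a,b)$ are literally forced), and existence is likewise deferred to an unverified check, where the paper invokes \cite[Thm.~5.1]{tight2}. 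Until these three points are filled in, the non-orientable half of the classification --- which is the new content of the theorem --- is not established.
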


In Sections \ref{s:background} and \ref{s:groups} we review basic concepts and results on tight abstract regular polytopes and their automorphism groups. The classification of orientably regular and non-orientably regular polyhedra are obtained in Sections \ref{s:orientable} and \ref{s:nonorientable}, respectively. Theorem \ref{t:schltypes} follows directly from the results in these two sections.

\section{Background}\label{s:background}
Our definitions are mostly taken from \cite[Chs. 2, 4]{arp}, with some minor modifications.

\subsection{Definition of an abstract polyhedron}

	Let $\calP$ be a ranked partially-ordered set with elements of rank $0$, called \emph{vertices},
	elements of rank $1$, called \emph{edges}, and elements of rank $2$, called \emph{faces}.
	Let us say that two elements $F$ and $G$ are \emph{incident} if $F \leq G$ or $G \leq F$.
	By a \emph{flag} we will mean a maximal chain (totally ordered set). The {\em vertex-figure} at a vertex $F_0$ is $\{G \mid G > F_0\}$. Then, $\calP$ is an \emph{(abstract) polyhedron} if it
	satisfies the following properties:

	\begin{enumerate}
	\item[(1)] Every flag of $\calP$ consists of a vertex, an edge, and a face (all mutually incident).
	\item[(2)] Each edge is incident on exactly two vertices and two faces.
	\item[(3)] The graph determined by the vertex and edge sets is connected.
	\item[(4)] The vertex-figure at every vertex is isomorphic to the vertex and edge lattice of a connected 2-regular graph.
	\end{enumerate}

When considering finite polyhedra, the last property can be interpreted as vertex-figures being (finite) polygons, whereas the second and fourth properties imply that the faces are also polygons.
As a consequence of the second and fourth properties above, given any flag $\Phi$ and $i \in \{0, 1, 2\}$,
there is a unique flag $\Phi^i$ that differs from $\Phi$ only in its element of rank $i$. We say that $\Phi^i$ is
\emph{$i$-adjacent} to $\Phi$ (or simply \emph{adjacent} to $\Phi$ if the rank $i$ is unimportant).

	In the remainder of the paper, let us drop the qualifier ``abstract'' and simply refer to
	polyhedra.

	Given a face of a polyhedron, if it is incident to $p$ edges, then it must also be
	incident to $p$ vertices. These edges and vertices occur in a single cycle, and we say
	that the face is a \emph{$p$-gon}. Similarly, if a vertex is incident to $q$ edges, then it
	is also incident to $q$ faces, occurring in a single cycle. In this case we say that
	the vertex-figure is a $q$-gon. If $\calP$ is a polyhedron whose
	faces are all $p$-gons and whose vertex-figures are all $q$-gons, then
	we say that $\calP$ has \emph{Schl\"afli symbol $\{p, q\}$}, or that it is of \emph{type $\{p, q\}$}.
	A polyhedron that has a Schl\"afli symbol is said to be \emph{equivelar}.
	
	If $\calP$ is a polyhedron, then the \emph{dual} of $\calP$, denoted $\calP^{\delta}$,
	is the polyhedron we obtain by reversing the partial order. If $\calP$ is of type
	$\{p, q\}$, then $\calP^{\delta}$ is of type $\{q, p\}$.

	Given any convex polyhedron, the partially-ordered set of its vertices, edges, and faces, ordered
	by the usual geometric incidence, is an abstract polyhedron. Similarly, any face-to-face tessellation
	of the plane yields an (infinite) abstract polyhedron. Indeed, every abstract polyhedron with finite faces and vertex-figures corresponds
	to a face-to-face tiling of some surface, which may or may not be orientable. The tiling, also called a {\em map}, can be constructed by taking a topological $p$-gon (topological disk with its boundary divided in $p$ segments) for each face $F$ containing $p$ edges. The $p$ segments of the $p$-gon are labeled with the edges incident to $F$, in such a way that if two segments intersect in a point, the corresponding edges in the partial order have a vertex in common. The point of intersection is labeled by the common vertex. Since every edge belongs to two faces, it only remains to identify segments of the $p$-gons corresponding to the same edge in such a way that vertices with the same label are also identified.

On the other hand, some tilings fail to satisfy property (4) above, and therefore they do not correspond to abstract polyhedra. An example of this is the map $\{4,4\}_{(1,1)}$ on the torus (see \cite{coxeter-moser}).

\subsection{Regularity and orientability}
		
	If $\calP$ and $\calQ$ are polyhedra, then a \emph{homomorphism}
	from $\calP$ to $\calQ$ is a function that preserves incidence.
	We say that $\calP$ \emph{covers} $\calQ$ if there is a surjective homomorphism $\varphi$
	from $\calP$ to $\calQ$ that also preserves rank and has the property that
	if flags $\Phi$ and $\Psi$ are $i$-adjacent, then so are their images under $\varphi$.
	An \emph{isomorphism} from $\calP$ to $\calQ$ is an incidence- and rank-preserving bijection.
	An isomorphism from $\calP$ to itself is an \emph{automorphism} of $\calP$, and the group of
	all automorphisms of $\calP$ is denoted by $\G(\calP)$. There is a natural action of
	$\G(\calP)$ on the flags of $\calP$, and due to the connectivity of $\calP$,
	the action of each automorphism is completely determined by its action on any
	given flag.
	
	We say that $\calP$ is \emph{regular} if the natural action of $\G(\calP)$ on the flags
	of $\calP$ is transitive (and hence regular, in the sense of being sharply-transitive).
	Indeed, for convex polyhedra, this definition is equivalent to any of the usual definitions of
	regularity.

	Since each automorphism of $\calP$ is completely determined by its action on any particular
	flag, let us choose a \emph{base flag} $\Phi$ of $\calP$. Then the automorphism group
	$\G(\calP)$ is generated by the \emph{abstract reflections} $\rho_0, \rho_1, \rho_2$,
	where each $\rho_i$ maps $\Phi$ to $\Phi^i$.
	These generators satisfy (at least) the relations $\rho_i^2 = \eps$ for all $i$ and $(\rho_0 \rho_2)^2 = \eps$.
	A regular polyhedron must be equivelar, and if its type is $\{p, q\}$, then $\langle \rho_0, \rho_1 \rangle$ is
	dihedral of order $2p$, and $\langle \rho_1, \rho_2 \rangle$ is dihedral of order $2q$.
	In other words, if $\calP$ is a regular polyhedron of type
	$\{p, q\}$, then $\G(\calP)$ is a smooth quotient of the string Coxeter group $[p, q]$, with presentation
	\[ [p, q] := \langle x, y, z \mid x^2 = y^2 = z^2 = 1, (xy)^p = (yz)^q = (xz)^2 = 1 \rangle. \]

Let $\G = \langle \rho_0, \rho_1, \rho_2 \rangle$ be
	a group such that the generators $\rho_i$ each have order $2$ and such that $(\rho_0 \rho_2)^2 = \eps$.
	Then we say that $\G$ is a \emph{string group generated by involutions} of rank 3, which we will abbreviate
	to \emph{sggi}. Now, for any $I \subseteq \{0, 1, 2\}$, we define $\G_I = \langle \rho_i \mid i \in I \rangle$.
	We say that $\G$ is a \emph{string C-group} of rank 3 if it satisfies the following \emph{intersection condition}:
	\begin{equation}
	\label{eq:int-cond}
	\G_I \cap \G_J = \G_{I \cap J}
	\;\; \textrm{ for all } I,J \subseteq \{0,1, 2\}.
	\end{equation}

If $\Gamma$ is the automorphism group of a polyhedron, the group $\Gamma_I$ corresponds to the stabilizer under the automorphism group of the subset of the base flag consisting of elements with ranks not in $I$. In particular, $\langle \rho_0, \rho_1 \rangle$ is the stabilizer of the base face, and $\langle \rho_1, \rho_2 \rangle$ is the stabilizer of the base vertex. The intersection condition for $\Gamma$ is a consequence of the definition of abstract polyhedron.

	The automorphism group of regular polyhedron is a string C-group of rank 3. Furthermore, there is a natural way to reconstruct a regular polyhedron from its automorphism group and the generators $\rho_i$.
	Indeed, regular polyhedra are in one-to-one correspondence with string C-groups of rank 3. Hence, every string C-group
	is the automorphism group of a (unique) regular polyhedron (see \cite[Thm. 2E11]{arp}).

	We will frequently encounter a group that is clearly an sggi, but where it is unclear whether it is
	a string C-group. The \emph{quotient criterion} below is often useful (\cite[Thm. 2E17]{arp}):

	\begin{proposition}
	\label{prop:quo-crit}
	Let $\G = \langle \rho_0, \rho_1, \rho_{2} \rangle$ be an sggi, and $\Lambda = \langle \lambda_0, \lambda_1,
	\lambda_{2} \rangle$ a string C-group. If there is a homomorphism
	$\pi: \G \to \Lambda$ sending each $\rho_i$ to $\lambda_i$, and if $\pi$ is one-to-one on the subgroup
	$\langle \rho_0, \rho_{1} \rangle$ or the subgroup $\langle \rho_1, \rho_{2} \rangle$,
	then $\G$ is a string C-group.
	\end{proposition}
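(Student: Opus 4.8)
The plan is to verify the intersection condition \eref{int-cond} for $\G$ directly, using two facts: that $\pi$ carries each standard subgroup $\G_I := \langle \rho_i \mid i \in I\rangle$ \emph{onto} $\Lambda_I := \langle \lambda_i \mid i \in I\rangle$ (the image of a generated subgroup is generated by the images of the generators), and that $\Lambda$ itself satisfies \eref{int-cond}. Since reversing the generating triples to $(\rho_2,\rho_1,\rho_0)$ and $(\lambda_2,\lambda_1,\lambda_0)$ preserves all the hypotheses (this amounts to passing to the dual polyhedron), I may assume without loss of generality that $\pi$ is injective on $\langle\rho_0,\rho_1\rangle$.

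First I would record the easy structural consequences. The generators of the rank-$3$ string C-group $\Lambda$ are pairwise distinct involutions, hence so are the $\rho_i$ (as $\pi(\rho_i) = \lambda_i$); thus $\langle\rho_0,\rho_1\rangle$ and $\langle\rho_1,\rho_2\rangle$ are honest dihedral groups, $\langle\rho_0,\rho_2\rangle$ is a Klein four-group, and all three are string C-groups of rank $2$. Moreover $\pi$ restricted to $\langle\rho_0,\rho_2\rangle$ is a surjection from a group of order at most $4$ onto the order-$4$ group $\langle\lambda_0,\lambda_2\rangle$, hence an isomorphism. So $\pi$ is in fact injective on \emph{both} $\langle\rho_0,\rho_1\rangle$ and $\langle\rho_0,\rho_2\rangle$.

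The crux is a uniform treatment of the remaining instances of \eref{int-cond}. Let $I, J \subseteq \{0,1,2\}$ with neither contained in the other; then $I \cap J$ is empty or a singleton, and a short inspection of the finitely many such pairs shows that one of $I, J$ — call it $K$ — satisfies $K \subseteq \{0,1\}$ or $K \subseteq \{0,2\}$, so that $\G_K$ lies inside a subgroup on which $\pi$ is injective. Take $g \in \G_I \cap \G_J$. Since $\Lambda$ is a string C-group, $\pi(g) \in \Lambda_I \cap \Lambda_J = \Lambda_{I \cap J}$, which is trivial or cyclic of order $2$ generated by $\lambda_m$ where $\{m\} = I \cap J$; in either case $\pi(g) = \pi(h)$ for some $h \in \G_{I \cap J}$ (namely $h = 1$ or $h = \rho_m$). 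Now $h \in \G_{I \cap J} \subseteq \G_K$ and $g \in \G_K$, so injectivity of $\pi$ on $\G_K$ forces $g = h \in \G_{I \cap J}$; as the reverse inclusion $\G_{I\cap J}\subseteq\G_I\cap\G_J$ is immediate, \eref{int-cond} holds and $\G$ is a string C-group.

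I expect the only genuinely delicate point to be the inspection in the previous paragraph: one must check that for \emph{every} relevant pair $(I,J)$ at least one of $\G_I,\G_J$ lies in a subgroup on which $\pi$ is injective. The tight case is $I = \{0,2\}$, $J = \{1,2\}$, which is exactly why the automatic injectivity of $\pi$ on the Klein four-group $\langle\rho_0,\rho_2\rangle$ — rather than on $\langle\rho_1,\rho_2\rangle$, where $\pi$ need not be injective — is what rescues the argument, alongside the hypothesised injectivity on $\langle\rho_0,\rho_1\rangle$ (or, dually, on $\langle\rho_1,\rho_2\rangle$). Everything else is bookkeeping over the small index set $\{0,1,2\}$.
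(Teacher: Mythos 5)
Your argument is correct and complete. Note that the paper itself offers no proof of this proposition --- it is quoted from \cite[Thm.\ 2E17]{arp} --- so there is nothing internal to compare against; what you have written is a valid self-contained proof. Relative to the standard argument, the structure is the same at its core (push an element of $\G_I \cap \G_J$ down to $\Lambda$, apply the intersection condition there, and pull back using injectivity), but you verify every instance of the intersection condition exhaustively, whereas the usual route (and the one this paper follows elsewhere, e.g.\ in the proof of \pref{simple-quo}) first invokes \cite[Prop.\ 2E16(a)]{arp} to reduce everything in rank $3$ to the single identity $\langle\rho_0,\rho_1\rangle \cap \langle\rho_1,\rho_2\rangle = \langle\rho_1\rangle$, which is then checked by exactly your pull-back argument with $K=\{0,1\}$. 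Your exhaustive version costs a little bookkeeping but buys independence from 2E16; the two observations that make it go through --- that the conclusion is self-dual so one may assume injectivity on $\langle\rho_0,\rho_1\rangle$, and that $\pi$ is automatically injective on the Klein four-group $\langle\rho_0,\rho_2\rangle$ so that every relevant pair $(I,J)$ has a member inside a subgroup where $\pi$ is injective --- are exactly the right ones, and the case inspection is accurate.
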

	
We next state another criterion to determine that some {\em sggi}'s are string C-groups:
	
	\begin{proposition}
	\label{prop:simple-quo}
	Let $\G = \langle \rho_0, \rho_1, \rho_2 \rangle$ be a string C-group. Let $N = \langle (\rho_0 \rho_1)^k \rangle$ or $\langle (\rho_1 \rho_2)^k \rangle$
	for some $k \ge 2$. If $N$ is normal in $\G$, then $\G/N$ is a string C-group.
	\end{proposition}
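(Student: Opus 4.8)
The plan is to reduce the claim to the intersection condition \eref{int-cond} for the quotient $\bar\G := \G/N$, using the single structural fact that $N$ is contained in the parabolic $\G_{\{0,1\}}$. We treat $N=\langle(\rho_0\rho_1)^k\rangle$; the case $N=\langle(\rho_1\rho_2)^k\rangle$ is dual. Write $r=\rho_0\rho_1$, so $N=\langle r^k\rangle$, and write $\bar g$ for the image of $g\in\G$ in $\bar\G$. Conjugation by $\rho_0$ or $\rho_1$ inverts $r$, so $N$ is automatically normal in $\G_{\{0,1\}}$; the hypothesis $N\trianglelefteq\G$ is precisely what makes $\bar\G$ a group, and it is then clearly an sggi generated by $\bar\rho_0,\bar\rho_1,\bar\rho_2$. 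One first records that these are distinct nontrivial involutions: $\rho_0$ and $\rho_1$ are reflections of the dihedral group $\G_{\{0,1\}}$ while every element of $N$ is a rotation, so $\rho_0,\rho_1\notin N$; and $\rho_2\notin\G_{\{0,1\}}\supseteq N$ by the intersection condition for $\G$. The same remarks give $\bar\rho_0\ne\bar\rho_2$ and $\bar\rho_1\ne\bar\rho_2$, while $\bar\rho_0\ne\bar\rho_1$ holds exactly when $r\notin N$ (see the final paragraph).

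The key point is that $N$ meets the remaining parabolics trivially. Since $N\le\G_{\{0,1\}}$, the intersection condition for $\G$ gives $N\cap\G_{\{1,2\}}\le\G_{\{0,1\}}\cap\G_{\{1,2\}}=\langle\rho_1\rangle$, and $\rho_1\notin N$, so $N\cap\G_{\{1,2\}}=\eps$; likewise $N\cap\G_{\{0,2\}}=\eps$. Hence the quotient map is injective on $\G_{\{1,2\}}$ and on $\G_{\{0,2\}}$, so $\bar\G_{\{1,2\}}\cong\G_{\{1,2\}}$ is dihedral of order $2q$ and $\bar\G_{\{0,2\}}\cong\G_{\{0,2\}}\cong\Z_2\times\Z_2$, while $\bar\G_{\{0,1\}}=\G_{\{0,1\}}/N$.

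It remains to verify $\bar\G_I\cap\bar\G_J=\bar\G_{I\cap J}$ for all $I,J\subseteq\{0,1,2\}$. Given that the $\bar\rho_i$ are distinct nontrivial involutions, every instance reduces to a short lifting argument using $N\le\G_{\{0,1\}}$ and the intersection condition for $\G$; the representative case is $\bar\G_{\{0,1\}}\cap\bar\G_{\{1,2\}}$. If $gN$ lies in both, then $g\in\G_{\{0,1\}}$ (because $N\le\G_{\{0,1\}}$) and $g=hn$ with $h\in\G_{\{1,2\}}$ and $n\in N$, so $h=gn^{-1}\in\G_{\{0,1\}}$, whence $h\in\G_{\{0,1\}}\cap\G_{\{1,2\}}=\langle\rho_1\rangle$ and $gN\in\langle\bar\rho_1\rangle$; the reverse inclusion is immediate. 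The case $\bar\G_{\{0,1\}}\cap\bar\G_{\{0,2\}}$ is identical with $\rho_0$ in place of $\rho_1$, and the instances pairing a two-element set with a one-element set (which amount to statements $\bar\rho_i\notin\bar\G_J$) go the same way. The one remaining case, $\bar\G_{\{1,2\}}\cap\bar\G_{\{0,2\}}=\langle\bar\rho_2\rangle$, is the most delicate: one lifts an element to $h_1\in\G_{\{1,2\}}$ and $h_2\in\G_{\{0,2\}}$ with $h_1N=h_2N$, enumerates the four possibilities for $h_2$ in $\G_{\{0,2\}}\cong\Z_2\times\Z_2$, and in each uses $N\cap\G_{\{1,2\}}=\eps$ together with $r,\rho_0\notin N$ to force $h_1\in\langle\rho_2\rangle$. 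This establishes the intersection condition, so $\bar\G$ is a string C-group.

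I expect the last case, $\bar\G_{\{1,2\}}\cap\bar\G_{\{0,2\}}$ --- the only one in which neither parabolic is $\G_{\{0,1\}}$ --- to be the main obstacle, since it is here that one must genuinely use that $N$ consists of rotations of the dihedral group $\G_{\{0,1\}}$, not merely that it is an abstract normal subgroup. One degeneracy should be flagged: if $r\in N$ --- possible only when $r$ has finite order coprime to $k$ --- then $\bar\rho_0=\bar\rho_1$ and $\bar\G$ collapses to rank at most $2$; when $k$ divides the order of $r$ (the case relevant to our applications) this does not arise, and $\bar\G$ is a genuine rank-$3$ string C-group, namely the automorphism group of a regular polyhedron of type $\{k,q\}$.
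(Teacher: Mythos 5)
Your proof is correct, and its engine is the same as the paper's: the one nontrivial verification in both arguments is $\overline{\G}_{\{0,1\}} \cap \overline{\G}_{\{1,2\}} = \langle \overline{\rho_1} \rangle$, proved by the identical lifting argument (use $\langle\rho_0,\rho_1\rangle N = \langle\rho_0,\rho_1\rangle$ to pull $g$ into $\langle\rho_0,\rho_1\rangle$, write $g = hn$ with $h \in \langle\rho_1,\rho_2\rangle$, and apply the intersection condition of $\G$ to $h$). Where you differ is in how the remaining instances of the intersection condition are dispatched: the paper invokes \cite[Prop. 2E16(a)]{arp}, which says that for rank $3$ this single intersection suffices once the two rank-$2$ parabolics are dihedral, whereas you verify every instance by hand, including the genuinely more delicate case $\overline{\G}_{\{1,2\}} \cap \overline{\G}_{\{0,2\}} = \langle\overline{\rho_2}\rangle$, which forces you to use that $N$ consists of rotations of $\langle\rho_0,\rho_1\rangle$ (so $\rho_0, \rho_1, \rho_0\rho_1 \notin N$) rather than merely that $N$ is normal. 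Your approach buys self-containedness at the cost of length; the paper's buys brevity at the cost of an external citation. One genuine contribution of your write-up is the observation that when $\gcd(k, p) = 1$ (so $\rho_0\rho_1 \in N$) the quotient collapses to $\overline{\rho_0} = \overline{\rho_1}$ and the intersection condition for singletons fails, so the proposition as literally stated needs the implicit hypothesis that $\rho_0\rho_1 \notin N$; the paper's proof glosses over this (its appeal to ``dihedral'' subgroups and to 2E16(a) tacitly assumes the generators remain distinct involutions), though the hypothesis does hold in every application made of the proposition, where $N$ is always a proper subgroup of $\langle\rho_0\rho_1\rangle$.
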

	
	\begin{proof}
Let $N = \langle (\rho_0 \rho_1)^k \rangle$ and suppose that $N$ is normal.
Let us write $\overline{\rho_i}$ for the image of $\rho_i$ under the canonical projection.
Clearly $\langle \overline{\rho_0}, \overline{\rho_1} \rangle$ and $\langle \overline{\rho_1}, \overline{\rho_2} \rangle$
are both dihedral, and so by \cite[Prop 2E16(a)]{arp}, it suffices to show that
$\langle \overline{\rho_0}, \overline{\rho_1} \rangle \cap \langle \overline{\rho_1}, \overline{\rho_2} \rangle = \langle \overline{\rho_1} \rangle$.
Consider an element in $\langle \overline{\rho_0}, \overline{\rho_1} \rangle \cap \langle \overline{\rho_1}, \overline{\rho_2} \rangle$. We may write that element as $\overline{g}$, where $g \in \G = \langle \rho_0, \rho_1, \rho_2 \rangle$. Then since $\overline{g} \in \langle \overline{\rho_0}, \overline{\rho_1} \rangle$, it follows that $g \in \langle \rho_0, \rho_1 \rangle N$,
and the latter is the same as simply $\langle \rho_0, \rho_1 \rangle$. Similarly, since $\overline{g} \in \langle \overline{\rho_1}, \overline{\rho_2} \rangle$, it follows that $g = h (\rho_0 \rho_1)^{mk}$ for some $h \in \langle \rho_1, \rho_2 \rangle$ and some $m$. But then $g (\rho_0 \rho_1)^{-mk} = h$ is an element of $\langle \rho_0, \rho_1 \rangle$, and so $h$ belongs to the intersection $\langle \rho_0, \rho_1 \rangle \cap \langle \rho_1, \rho_2 \rangle$. Since $\G$ is a string C-group, this means that $h \in \langle \rho_1 \rangle$. Finally, $\overline{g} = \overline{h}$, so we see that $\overline{g} \in \langle \overline{\rho_1} \rangle$. Therefore, $\langle \overline{\rho_0}, \overline{\rho_1} \rangle \cap \langle \overline{\rho_1}, \overline{\rho_2} \rangle$ is contained in $\langle \overline{\rho_1} \rangle$, and the reverse inclusion is obvious.

A dual argument proves the result if $N = \langle (\rho_1 \rho_2)^k \rangle$.
	\end{proof}

	Given a regular polyhedron $\calP$ with automorphism group $\G(\calP) = \langle \rho_0, \rho_1, \rho_2 \rangle$,	
	we define the \emph{abstract rotations} $\s_1 := \rho_0 \rho_1$ and $\s_2 := \rho_1 \rho_2$.
	Then the subgroup $\langle \s_1, \s_2 \rangle$ of $\G(\calP)$ is denoted by $\G^+(\calP)$,
	and called the \emph{rotation subgroup of $\calP$}.
	The index of $\G^+(\calP)$ in $\G(\calP)$ is at most $2$, and when the index is exactly $2$,
	then we say that $\calP$ is \emph{orientably regular}. Otherwise, if $\G^+(\calP) = \G(\calP)$,
	then we say that $\calP$ is \emph{non-orientably regular}.  Indeed, a regular polyhedron is
	orientably or non-orientably regular in accordance with whether the underlying surface
	is orientable or not (when viewing the polyhedron as a map).
	A regular polyhedron $\calP$ is orientably regular if and only if $\G(\calP)$ has a presentation
	in terms of the generators $\rho_0, \rho_1, \rho_2$ such that all of the relators have even length. As
	a consequence, we have the following:
		
	\begin{proposition}
	\label{prop:quos-nonor}
	Let $\calP$ be a non-orientably regular polyhedron. If $\calP$ covers a regular polyhedron $\calQ$, then $\calQ$ is also non-orientably regular.
	\end{proposition}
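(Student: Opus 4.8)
The plan is to translate the statement into a statement about rotation subgroups and to exploit the fact that a covering between regular polyhedra descends to a surjection of automorphism groups carrying distinguished generators to distinguished generators. First I would record the relevant reformulations of ``non-orientably regular'': for a regular polyhedron $\calR$ with $\G(\calR) = \langle \rho_0, \rho_1, \rho_2 \rangle$ we have $\G^+(\calR) = \langle \rho_0\rho_1, \rho_1\rho_2 \rangle$, and $\calR$ is non-orientably regular iff $\G^+(\calR) = \G(\calR)$ iff $\rho_0 \in \G^+(\calR)$ (the last equivalence is immediate, since $\rho_0 \in \G^+(\calR)$ forces $\rho_1 = \rho_0(\rho_0\rho_1) \in \G^+(\calR)$ and then $\rho_2 = \rho_1(\rho_1\rho_2) \in \G^+(\calR)$). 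Equivalently, and this is the form I would actually use, $\calR$ is \emph{orientably} regular iff the assignment $\rho_i \mapsto 1$ extends to a homomorphism $\G(\calR) \to \Z/2\Z$ (whose kernel is then exactly $\G^+(\calR)$); this is just a restatement of the ``all relators have even length'' criterion quoted above.

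Next I would invoke the correspondence between regular polyhedra and string C-groups: since $\calP$ and $\calQ$ are regular and $\calP$ covers $\calQ$, there is a surjective homomorphism $\pi \colon \G(\calP) \to \G(\calQ)$ sending each distinguished generator of $\G(\calP)$ to the corresponding distinguished generator of $\G(\calQ)$ (see \cite[Ch.~2]{arp}; concretely, a covering carries the base flag of $\calP$ to a flag of $\calQ$, which by regularity of $\calQ$ may be taken to be the base flag, and then the adjacency-preserving property of the covering forces the images of the $\rho_i$). Because $\pi$ is surjective and respects the generators, it maps $\G^+(\calP) = \langle \rho_0\rho_1, \rho_1\rho_2 \rangle$ onto $\langle \pi(\rho_0)\pi(\rho_1), \pi(\rho_1)\pi(\rho_2) \rangle = \G^+(\calQ)$.

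Now I would combine the two ingredients. Since $\calP$ is non-orientably regular, $\rho_0 \in \G^+(\calP)$; applying $\pi$ gives $\pi(\rho_0) \in \pi(\G^+(\calP)) = \G^+(\calQ)$, and $\pi(\rho_0)$ is the first distinguished generator of $\G(\calQ)$, so $\G^+(\calQ) = \G(\calQ)$, i.e.\ $\calQ$ is non-orientably regular. (Equivalently, in contrapositive form: if $\calQ$ were orientably regular, composing the parity homomorphism $\G(\calQ) \to \Z/2\Z$ with $\pi$ would yield a homomorphism $\G(\calP) \to \Z/2\Z$ sending every $\rho_i$ to $1$, contradicting non-orientable regularity of $\calP$.) The only step I expect to need more than a line is the first ingredient: that the covering genuinely descends to a surjection of automorphism groups matching up distinguished generators, and in particular that $\calQ$ — being covered by a regular polyhedron in the strong, adjacency-preserving sense of the definition — is itself regular. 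This is exactly the place where one leans on the standard structure theory of regular polytopes rather than a bare-hands argument. (One can also see the whole statement topologically: a covering of maps restricts to a covering of the underlying surfaces, and a covering space of an orientable surface is orientable, so a non-orientable $\calP$ can only cover a non-orientable $\calQ$; but the group-theoretic route keeps everything within the framework already set up.)
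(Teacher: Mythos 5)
Your proof is correct and is essentially the paper's argument: the paper simply notes that some odd relator holds in $\G(\calP)$ and therefore also in $\G(\calQ)$, which is exactly your contrapositive via the parity homomorphism to $\Z/2\Z$ (whose kernel is $\G^+$). You spell out the induced surjection $\pi\colon \G(\calP)\to\G(\calQ)$ on distinguished generators explicitly where the paper leaves it implicit, but the underlying mechanism is the same.
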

	
	\begin{proof}
	If $\calP$ is non-orientably regular, then some odd relation holds in $\G(\calP)$, and the same relation must hold in $\G(\calQ)$.
	\end{proof}

	From the properties of the automorphism groups of regular polyhedra and the definitions of $\sigma_1$ and $\sigma_2$ it follows that the rotation subgroups of orientably regular polyhedra satisfy
\begin{equation}\label{eq:sinvolution}
(\sigma_1 \sigma_2)^2 = 1
\end{equation}
and the intersection condition $\langle \sigma_1 \rangle \cap \langle \sigma_2 \rangle = \{1\}$. Indeed, $\langle \sigma_1 \rangle$, $\langle \sigma_1 \sigma_2 \rangle$ and $\langle \sigma_2 \rangle$ are the stabilizers in $\Gamma^+({\cal P})$ of the base face, base edge and base vertex of $\cal P$, respectively.
	Just as we can reconstruct a regular polyhedron from its automorphism group, we can also reconstruct
	an orientably regular polyhedron from its rotation subgroup and specified generators $\sigma_1$ and $\sigma_2$ \cite[Thm. 1]{chiral}.

	Let us say that $\calP$ has \emph{multiple edges} if the underlying graph of $\calP$ has multiple edges with
	the same vertex-set. (In other words, if there is a pair of vertices with more than one edge between them.)
	By regularity, if some pair of vertices has $r$ edges between them, then every pair of vertices has either
	$0$ or $r$ edges between them. Polyhedra without multiple edges are particularly nice to work with
	combinatorially, in part because of the following property.
	
	\begin{proposition}\label{prop:FaithfulAction}
	If $\cal P$ is an orientably regular polyhedron with no multiple edges, then $\G^+(\calP)$ acts faithfully on the vertex set of $\cal P$.
	\end{proposition}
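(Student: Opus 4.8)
The plan is to show that the kernel $K$ of the action of $\G^+(\calP)$ on the vertex set of $\calP$ is trivial. First, every $\varphi \in K$ fixes the base vertex $F_0$, and since $\langle \s_2 \rangle$ is exactly the stabilizer of $F_0$ in $\G^+(\calP)$ (as recalled just before \eqref{eq:sinvolution}), we get $K \le \langle \s_2 \rangle$. So it suffices to show that no power $\s_2^k$ other than the identity fixes every vertex.

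Next I would bring in the hypothesis that $\calP$ has no multiple edges. Let $v$ denote the vertex of the base edge $F_1$ other than $F_0$. If $\s_2^k \in K$, then $\s_2^k$ fixes $v$ as well as $F_0$; since $\calP$ has no multiple edges, $F_1$ is the \emph{only} edge joining $F_0$ to $v$, and therefore any automorphism fixing both $F_0$ and $v$ must also fix $F_1$. Hence $\s_2^k$ fixes the chain $\{F_0, F_1\}$, which means $\s_2^k$ lies in the stabilizer of that chain, namely $\G_{\{2\}} = \langle \rho_2 \rangle$ (equivalently, $\rho_2$ is the only nontrivial automorphism fixing both $F_0$ and $F_1$, so $\s_2^k \in \{1, \rho_2\}$).

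Finally, $\s_2^k$ lies in $\G^+(\calP) \cap \langle \rho_2 \rangle$. Because $\calP$ is orientably regular, $\G^+(\calP)$ consists precisely of the even-length words in $\rho_0, \rho_1, \rho_2$, so $\rho_2 \notin \G^+(\calP)$ and this intersection is trivial. Thus $\s_2^k = 1$, giving $K = \{1\}$, so $\G^+(\calP)$ acts faithfully on the vertices. As an alternative to the last two paragraphs, one can observe directly that $\s_2$ permutes the $q$ edges incident to $F_0$ in a single $q$-cycle, hence (there being no multiple edges) permutes the $q$ neighbours of $F_0$ in a single $q$-cycle; so $\s_2^k$ fixes a neighbour of $F_0$ only when $q \mid k$, i.e.\ only when $\s_2^k = 1$.

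The one step requiring care — and the place where the hypotheses genuinely enter — is the passage from ``$\s_2^k$ fixes $F_0$ and $v$'' to ``$\s_2^k$ fixes $F_1$'': this is exactly the no-multiple-edges assumption, since in general $\s_2$ carries $F_1$ to another edge at $F_0$. Orientability is then what forces $\langle \rho_2 \rangle \cap \G^+(\calP)$ to be trivial. Everything else is routine bookkeeping with the standard correspondence between the subgroups $\G_I$ and stabilizers of sub-chains of the base flag.
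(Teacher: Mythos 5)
Your proof is correct and follows essentially the same route as the paper's: both reduce a kernel element to one fixing the base vertex and (via the no-multiple-edges hypothesis) the base edge, and then kill it by an intersection-of-stabilizers argument. The paper finishes with $\langle \s_2 \rangle \cap \langle \s_1\s_2 \rangle = \{1\}$ inside $\G^+(\calP)$, while you finish with $\langle \rho_2 \rangle \cap \G^+(\calP) = \{1\}$ via parity; these are interchangeable.
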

	
	\begin{proof}
	Assume to the contrary that there is a non-trivial automorphism $\gamma$ fixing each vertex. Since $\cal P$ has no multiple edges,
	$\gamma$ must also fix every edge. In particular, $\gamma$ fixes the base edge. Since $\gamma$ fixes the base vertex and the base
	edge, that means that $\gamma \in \langle \s_2 \rangle \cap \langle \s_1 \s_2 \rangle$, and by the intersection
	condition, it follows that $\gamma$ is the identity.
	\end{proof}

Note that for $p \ge 3$, the polyhedron with Schl\"afli type $\{p,2\}$ has no multiple edges and the reflection $\rho_2$ acts like the identity on the vertex set. These are the only polyhedra $\cal P$ with no multiple edges for which the full automorphism group $\Gamma({\cal P})$ does not act faithfully on the vertex set.

	The dual of a regular polyhedron is itself regular. Furthermore, if $\G(\calP) = \langle \rho_0, \rho_1, \rho_2 \rangle$
	and $\G(\calP^{\delta}) = \langle \rho_0', \rho_1', \rho_2' \rangle$, then to obtain the defining relations of $\G(\calP^{\delta})$,
	we can simply change the defining relations of $\G(\calP)$ by replacing each $\rho_i$ with $\rho_{2-i}'$.
	This also has the effect of replacing each $\s_i$ with $(\s_{3-i}^{-1})'$.

\subsection{Tight and flat polyhedra}

	It was shown in \cite[Prop. 3.3]{tight-polytopes} that a finite polyhedron of type $\{p, q\}$ has at least $2pq$ flags.
	When it has exactly that many flags, the polyhedron is called \emph{tight} (a term introduced by Marston Conder
	in \cite{smallest-regular}). Proposition 4.1 of \cite{tight-polytopes} showed that every tight polyhedron
	is also \emph{flat}: every face is incident with every vertex. Furthermore, every flat polyhedron
	has a Schl\"afli symbol and is automatically tight as well.

	A regular polyhedron $\calP$ with $\G(\calP) = \langle \rho_0, \rho_1, \rho_2 \rangle$
	is flat
	if and only if $\G(\calP) = \langle \rho_0, \rho_1 \rangle \langle \rho_1, \rho_2 \rangle$ (\cite[Prop. 4E4]{arp}).
	Equivalently, $\calP$ is flat if and only if $\G(\calP) = \langle \s_1 \rangle \langle \rho_1 \rangle \langle \s_2 \rangle$.
	Let $w \in \G(\calP)$ and suppose that 
	\[ w = \s_1^i \rho_1^j \s_2^k = \s_1^{i'} \rho_1^{j'} \s_2^{k'}. \]
	Then
	\[ \s_1^{i-i'} \rho_1^j = \rho_1^{j'} \s_2^{k'-k}. \]
	Therefore, $w$ is in $\langle \rho_0, \rho_1 \rangle$ and $\langle \rho_1, \rho_2 \rangle$,
	and by the intersection condition (\eref{int-cond}), it follows that $w \in \langle \rho_1 \rangle$.
	So $\s_1^{i-i'} = \eps$ and $\s_2^{k'-k} = \eps$, and thus $\rho_1^j = \rho_1^{j'}$ as well.
	Therefore, the expression of $w$ as $\s_1^i \rho_1^j \s_2^k$ is essentially unique (except that we
	may, of course, change $i$ to $i+p$ and so on.)

	In the remainder of the paper, we will find it useful to use the generating set $\{\s_1, \rho_1, \s_2\}$
	instead of $\{\rho_0, \rho_1, \rho_2\}$.
	Let $\G = \langle \rho_0, \rho_1, \rho_2 \rangle$ be an sggi and let $\sigma_1=\rho_0 \rho_1$, $\sigma_2=\rho_1 \rho_2$. In analogy with regular polyhedra,
	let us say that the group $\G$ is \emph{tight} if $\G = \langle \s_1 \rangle \langle \rho_1 \rangle \langle \s_2 \rangle$.
	If the order of $\s_1$ is $p$ and the order of $\s_2$ is $q$, then we will say that the group $\G$ is \emph{of type $\{p, q\}$}.
	
	The following results all help us determine when a group (or polyhedron) is tight.
	
	\begin{proposition}
	\label{prop:tight-quo-means-tight}
	Suppose $\G = \langle \rho_0, \rho_1, \rho_2 \rangle$ is an sggi with $\sigma_1=\rho_0 \rho_1$ and $\sigma_2=\rho_1 \rho_2$ and with normal subgroup $N = \langle \s_1^m \rangle$ or $N = \langle \s_2^m \rangle$. If $\G / N$ is tight,
	then so is $\G$.
	\end{proposition}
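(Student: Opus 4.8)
The plan is to unwind the definition of tightness for the quotient $\G/N$ and push the resulting factorization back up to $\G$. Suppose first that $N = \langle \s_1^m \rangle$ is normal in $\G$, and write $\ch{g}$ for the image of $g \in \G$ under the canonical projection $\pi : \G \to \G/N$; note that $\ch{\s_1}$, $\ch{\rho_1}$, $\ch{\s_2}$ are exactly the images of $\s_1$, $\rho_1$, $\s_2$, and that $\G/N$ is an sggi of the same kind. Since $\G/N$ is tight, $\G/N = \langle \ch{\s_1} \rangle \langle \ch{\rho_1} \rangle \langle \ch{\s_2} \rangle$, so every coset $\ch{g} = Ng$ can be written as $\ch{\s_1}^{\,i}\ch{\rho_1}^{\,j}\ch{\s_2}^{\,k}$ for suitable integers $i,j,k$. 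Pulling this back along $\pi$, this says precisely that $g \in N\s_1^i\rho_1^j\s_2^k$, viewed as a subset of $\G$.

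The key observation is then simply that $N \le \langle \s_1 \rangle$, since $N$ is generated by $\s_1^m \in \langle \s_1 \rangle$. Consequently $N\s_1^i \subseteq \langle \s_1 \rangle$, because $\langle \s_1 \rangle$ is a subgroup containing both $N$ and $\s_1^i$. Therefore $g \in N\s_1^i\rho_1^j\s_2^k = (N\s_1^i)\,\rho_1^j\,\s_2^k \subseteq \langle \s_1 \rangle \langle \rho_1 \rangle \langle \s_2 \rangle$. As $g \in \G$ was arbitrary, $\G = \langle \s_1 \rangle \langle \rho_1 \rangle \langle \s_2 \rangle$, i.e., $\G$ is tight.

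For the case $N = \langle \s_2^m \rangle$, I would run the mirror-image argument. As before, $g \in N\s_1^i\rho_1^j\s_2^k$ for some $i,j,k$; now the stray $N$ sits on the wrong side of the ordered product, so I would use normality of $N$ to rewrite $N\s_1^i\rho_1^j = \s_1^i N\rho_1^j = \s_1^i\rho_1^j N$, obtaining $g \in \s_1^i\rho_1^j(N\s_2^k) \subseteq \s_1^i\rho_1^j\langle \s_2 \rangle \subseteq \langle \s_1 \rangle \langle \rho_1 \rangle \langle \s_2 \rangle$, where $N\s_2^k \subseteq \langle \s_2 \rangle$ since $N \le \langle \s_2 \rangle$. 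Alternatively, one may deduce this case from the first by the duality dictionary of Section~\ref{s:background}, which (formally, for any sggi) interchanges $\s_1$ with $\s_2^{-1}$, fixes $\rho_1$, and preserves both the hypothesis $N = \langle \s_2^m \rangle \leftrightarrow \langle (\s_1^{-1})^m\rangle$ and the (self-dual) property of being tight.

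I do not expect any real obstacle here: the argument is essentially a one-line coset computation. The only point requiring a little care is the bookkeeping about which side the leftover factor from $N$ occupies relative to the ordered product $\langle \s_1 \rangle \langle \rho_1 \rangle \langle \s_2 \rangle$ — this is why normality of $N$ (not merely $N$ being a subgroup of $\langle \s_1 \rangle$ or $\langle \s_2 \rangle$) is used in the second case. It is also worth emphasising that the proof neither needs nor establishes that $\G$ is a string C-group; for an sggi, ``tight'' is by definition just the factorization $\G = \langle \s_1 \rangle \langle \rho_1 \rangle \langle \s_2 \rangle$.
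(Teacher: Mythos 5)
Your proof is correct and follows essentially the same route as the paper's: lift the factorization $\ch{g}=\ch{\s_1}^i\ch{\rho_1}^j\ch{\s_2}^k$ to the coset $N\s_1^i\rho_1^j\s_2^k$ and observe that this coset lies in $\langle \s_1 \rangle \langle \rho_1 \rangle \langle \s_2 \rangle$ because $N$ is contained in $\langle\s_1\rangle$ (resp.\ $\langle\s_2\rangle$). The paper simply says ``without loss of generality'' for the second case; your explicit handling of where the leftover factor of $N$ sits is a harmless elaboration of the same argument.
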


	\begin{proof}
	Without loss of generality, assume that $N = \langle \s_1^m \rangle$. Let $g \in \G$, and let $\varphi: \G \to \G/N$ be
	the canonical map. Let $\overline{g} = \varphi(g)$. Then since $\G/N$ is tight, we may write $\overline{g}$ as
	$\overline{\s_1^i \rho_1^j \s_2^k}$ for some choice of $i$, $j$, and $k$. Then $g$ is
	in the coset $N(\s_1^i \rho_1^j \s_2^k)$, and every element there is in $\langle \s_1 \rangle \langle \rho_1 \rangle \langle \s_2 \rangle$.
	\end{proof}

	\begin{proposition}
	\label{prop:move-s1-s2}
	Let $\G = \langle \rho_0, \rho_1, \rho_2 \rangle$ be an sggi with $\sigma_1=\rho_0 \rho_1$ and $\sigma_2=\rho_1 \rho_2$. Then $\G$ is tight if and only if every expression of the form $\s_2^i \s_1^j$
	is equivalent to an expression of the form $\s_1^{i'} \s_2^{j'}$ or of the form $\s_1^{i'} \rho_1 \s_2^{j'}$.
	\end{proposition}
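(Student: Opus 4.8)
The plan is to establish the two implications separately. The forward direction is essentially a restatement of the definition: if $\G$ is tight then $\G = \langle\s_1\rangle\langle\rho_1\rangle\langle\s_2\rangle$, and since $\rho_1^2 = 1$ gives $\langle\rho_1\rangle = \{1,\rho_1\}$, every element of $\G$---in particular each element of the form $\s_2^i\s_1^j$---equals $\s_1^a\s_2^c$ or $\s_1^a\rho_1\s_2^c$ for suitable exponents; reading off $a$ and $c$ as functions $f_1(i,j)$, $f_2(i,j)$ yields exactly the two claimed forms.

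For the converse I would argue that, under the rewriting hypothesis, the set $S := \langle\s_1\rangle\langle\rho_1\rangle\langle\s_2\rangle = \{\s_1^a\rho_1^b\s_2^c : a,c\in\Z,\ b\in\{0,1\}\}$ is all of $\G$. Since $\rho_0 = \s_1\rho_1$ and $\rho_2 = \rho_1\s_2$, the group $\G$ is generated by the inverse-closed set $\{\s_1^{\pm1},\s_2^{\pm1},\rho_1\}$, so it suffices to check that $1\in S$ (clear) and that $S$ is closed under right multiplication by each of these generators; that set then generates $\G$ as a monoid, so starting from $1$ we reach every element of $\G$ inside $S$, whence $S = \G$, i.e.\ $\G$ is tight. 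Closure under right multiplication by $\s_2^{\pm1}$ is immediate, since the new factor is absorbed into $\s_2^c$. For $\rho_1$ and $\s_1^{\pm1}$ I would invoke the identities $\rho_1\s_1\rho_1 = \s_1^{-1}$ and $\rho_1\s_2\rho_1 = \s_2^{-1}$, which hold in any sggi because $\rho_i^2 = 1$; these rearrange to $\rho_1^b\s_1^a = \s_1^{(-1)^b a}\rho_1^b$ and $\s_2^c\rho_1 = \rho_1\s_2^{-c}$ (with exponents of $\rho_1$ read modulo $2$). The first makes closure under $\rho_1$ transparent: $\s_1^a\rho_1^b\s_2^c\cdot\rho_1 = \s_1^a\rho_1^{b+1}\s_2^{-c}\in S$. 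Closure under $\s_1^{\pm1}$ is the one step that actually uses the hypothesis: to compute $\s_1^a\rho_1^b\s_2^c\cdot\s_1^{\pm1}$ one must move $\s_1^{\pm1}$ leftward past $\s_2^c$, and the hypothesis says precisely that $\s_2^c\s_1^{\pm1}$ already lies in $S$, say $\s_2^c\s_1^{\pm1} = \s_1^{a'}\rho_1^{b'}\s_2^{c'}$; then $\s_1^a\rho_1^b\s_2^c\cdot\s_1^{\pm1} = \s_1^a\rho_1^b\s_1^{a'}\rho_1^{b'}\s_2^{c'} = \s_1^{a+(-1)^b a'}\rho_1^{b+b'}\s_2^{c'}\in S$, again via the commutation identity for $\rho_1$ and $\s_1$.

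The argument has no serious obstacle: the only nontrivial input is that $\s_2^c\s_1^{\pm1}\in S$, which is exactly the hypothesis (an instance of it with $j=\pm1$, or $j=p-1$ when $\s_1$ has finite order $p$), and everything else is the routine commutation bookkeeping of tracking the signs $(-1)^b$ and reducing $\rho_1$-exponents modulo $2$. The one point worth stating carefully is why verifying closure under right multiplication by a generating set suffices---namely that the chosen generating set $\{\s_1^{\pm1},\s_2^{\pm1},\rho_1\}$ is closed under taking inverses, so it generates $\G$ as a monoid.
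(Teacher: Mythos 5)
Your proposal is correct and follows essentially the same route as the paper: the necessity is immediate from the definition of tightness, and the sufficiency is the same rewriting argument using $\rho_1\s_1\rho_1=\s_1^{-1}$, $\rho_1\s_2\rho_1=\s_2^{-1}$, and the hypothesis to move powers of $\s_1$ leftward past powers of $\s_2$. The only difference is presentational — you formalize the paper's informal ``move every $\s_1$ to the left'' as closure of $\langle\s_1\rangle\langle\rho_1\rangle\langle\s_2\rangle$ under right multiplication by an inverse-closed generating set, which is a clean way to make the induction explicit.
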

			
	\begin{proof}
	The necessity is obvious. For sufficiency, we note that the assumption says that we may move any power of $\s_1$ left past
	any power of $\s_2$. Since we also have $\rho_1 \s_1^i = \s_1^{-i} \rho_1$, we see that in any expression of a word in
	the generators of $\G$, we may move every $\s_1$ to the left. Similarly, we may move every $\s_2$ to the right
	(since $\s_2^i \rho_1 = \rho_1 \s_2^{-i}$), and so any element of $\G$ can be written as $\s_1^i \rho_1^j \s_2^k$
	for some $i$, $j$, and $k$.
	\end{proof}

	\begin{proposition}
	\label{prop:tight-covers}
	If $\calP$ and $\calQ$ are polyhedra of type $\{p, q\}$ such that $\calP$ covers $\calQ$, and if $\calP$ is tight, then
	$\calP \simeq \calQ$.
	\end{proposition}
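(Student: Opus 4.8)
The plan is to run a flag count. Fix a covering $\varphi\colon\calP\to\calQ$. Since $\varphi$ preserves rank and incidence, it carries each flag of $\calP$ (a chain with one element of each rank $0,1,2$) to a flag of $\calQ$; write $\varphi_*$ for the induced map on flag sets. The first thing to check is that $\varphi_*$ commutes with the adjacency operators: if $\Phi$ is a flag of $\calP$, then $\Phi$ and $\Phi^i$ are $i$-adjacent, so $\varphi_*(\Phi)$ and $\varphi_*(\Phi^i)$ are $i$-adjacent, and because a flag of a polyhedron has a \emph{unique} $i$-adjacent flag this forces $\varphi_*(\Phi^i)=\varphi_*(\Phi)^i$. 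Hence the image of $\varphi_*$ is a non-empty set of flags of $\calQ$ closed under all three adjacency operators; since a polyhedron is flag-connected (a standard consequence of the polyhedron axioms), that image is the whole flag set of $\calQ$, i.e.\ $\varphi_*$ is surjective.

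Next I would bring in the counting bound. As $\calQ$ has type $\{p,q\}$ it has at least $2pq$ flags by \cite[Prop.~3.3]{tight-polytopes}, whereas $\calP$, being tight, has exactly $2pq$ flags. A surjection from a set of size $2pq$ onto a set of size at least $2pq$ is a bijection, so $\varphi_*$ is a bijection (and $\calQ$ is tight as well). In other words, $\varphi_*$ is a bijection commuting with each $i$-adjacency, i.e.\ an isomorphism of the flag graphs of $\calP$ and $\calQ$ respecting the labelling of edges by adjacency type.

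It then remains to upgrade this to an isomorphism of posets. For a fixed rank $i$, two flags of a polyhedron contain the same element of rank $i$ exactly when they are joined by a path of adjacent flags none of whose steps is an $i$-adjacency; in rank $3$ this is checked directly, since the flags through a given vertex, through a given edge, or through a given face each form a connected subgraph under the remaining two adjacency types (faces and vertex-figures are polygons, and each edge lies in two vertices and two faces). Because $\varphi_*$ is a label-preserving bijection of flag graphs, it transports this equivalence relation on flags of $\calP$ bijectively onto the analogous one on flags of $\calQ$; identifying rank-$i$ elements with these equivalence classes and noting that $\varphi$ induces exactly the resulting bijection of classes, we conclude that $\varphi$ restricts to a bijection on elements of rank $i$ for each $i$, hence is a bijection on all of $\calP$. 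Finally, if $\varphi(F)<\varphi(G)$, take a flag of $\calQ$ through both $\varphi(F)$ and $\varphi(G)$ and pull it back along $\varphi_*^{-1}$; injectivity of $\varphi$ on each rank shows the preimage flag contains $F$ and $G$, so $F<G$. Thus $\varphi^{-1}$ also preserves incidence, and $\varphi$ is an isomorphism.

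The routine parts (commutation of $\varphi_*$ with adjacencies, and the count) are short; the step that needs care is the last one, making sure a combinatorial isomorphism of flag graphs genuinely descends to an isomorphism of the underlying posets. If one wishes to avoid the explicit identification of elements with flag-classes, an alternative is to argue purely numerically: the flag-graph isomorphism forces the number of rank-$i$ elements of $\calP$ to equal that of $\calQ$ for each $i$, and since $\varphi$ maps onto each rank stratum it is bijective there, with incidence-reflection following as above.
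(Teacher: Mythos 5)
Your proof is correct and follows essentially the same route as the paper: the paper's argument is exactly the flag count ($\calP$ tight gives $2pq$ flags, the cover gives $\calQ$ at most $2pq$, and the type $\{p,q\}$ gives $\calQ$ at least $2pq$), after which it simply says ``the result then follows.'' Your additional work---checking that $\varphi_*$ commutes with the adjacency operators, that it is surjective by flag-connectivity, and that a label-preserving bijection of flag graphs descends to a poset isomorphism---is a careful and accurate filling-in of the step the paper leaves implicit.
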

	
	\begin{proof}
	Since $\calP$ is tight, it has $2pq$ flags, and thus $\calQ$ has at most $2pq$ flags. On the other hand, $\calQ$ itself
	has Schl\"afli symbol $\{p, q\}$, and so it has at least $2pq$ flags. The result then follows.
	\end{proof}

\comment{\subsection{Mixing regular polyhedra}

	It is frequently useful to find the minimal common cover of two regular polyhedra.
	The method for doing so is essentially algebraic, and is called \emph{mixing}.
	For our purposes, we will only need to mix orientably regular polyhedra, so we
	consider only that special case.
	
	Let $\calP$ and $\calQ$ be orientably regular polyhedra, with $\G^+(\calP) = \langle \s_1, \s_2 \rangle$
	and $\G^+(\calQ) = \langle \s_1', \s_2' \rangle$. Let $\alpha_i = (\s_i, \s_i') \in \G^+(\calP) \times \G^+(\calQ)$.
	Then we define the \emph{mix of $\G^+(\calP)$ and $\G^+(\calQ)$} to be the group
	\[ \G^+(\calP) \mix \G^+(\calQ) := \langle \alpha_1, \alpha_2 \rangle. \]
	This always yields the rotation group of another orientably regular polyhedron (see \cite[Cor. 3.7]{self-dual-chiral}), and we call
	that polyhedron the \emph{mix of $\calP$ and $\calQ$} and denote it $\calP \mix \calQ$.
	If $\calP$ has Schl\"afli symbol $\{p, q\}$ and $\calQ$ has Schl\"afli symbol $\{p', q'\}$,
	then $\calP \mix \calQ$ has Schl\"afli symbol $\{r, s\}$, where $r = \lcm(p, p')$ and
	$s = \lcm(q, q')$.
	
	In order to determine the number of flags of $\calP \mix \calQ$, it is useful to work with a dual
	operation. We define the \emph{comix of $\G^+(\calP)$ and $\G^+(\calQ)$}, denoted
	$\G^+(\calP) \comix \G^+(\calQ)$, to be the group obtained by adding the defining
	relations from $\G^+(\calQ)$ to the presentation for $\G^+(\calP)$, changing
	each $\s_i'$ to $\s_i$. Unlike the mix, there is no reason (in general) for the comix
	of $\G^+(\calP)$ and $\G^+(\calQ)$ to be the rotation group of an orientably
	regular polyhedron.
	
	Proposition 3.3 from \cite{self-dual-chiral} relates the size of $\G^+(\calP) \mix \G^+(\calQ)$ to
	the size of $\G^+(\calP) \comix \G^+(\calQ)$:
	
	\begin{proposition}
	\label{prop:mix-size}
	Let $\calP$ and $\calQ$ be finite orientably regular polyhedra. Then
	\[ |\G^+(\calP) \mix \G^+(\calQ)| \cdot |\G^+(\calP) \comix \G^+(\calQ)| = |\G^+(\calP)| \cdot |\G^+(\calQ)|. \]
	\end{proposition}
} 

\section{Automorphism groups of tight regular polyhedra}\label{s:groups}
Our goal is to find a complete classification of the tight regular polyhedra. In particular,
we want to find, for each Schl\"afli symbol $\{p, q\}$, how many tight regular polyhedra there are of
that type (up to isomorphism), and provide presentations for their automorphism groups.
We will proceed by showing that certain relations must hold, and then that these
relations suffice to define the group.

We will frequently use the following simple result:

\begin{proposition}
\label{prop:reverse-rel}
Let $\G = \langle \rho_0, \rho_1, \rho_2 \rangle$ be an sggi with $\sigma_1=\rho_0 \rho_1$ and $\sigma_2=\rho_1 \rho_2$. Suppose that $g_1 \cdots g_m = h_1 \cdots h_n$,
where each $g_i$ and $h_i$ is in the set $\{\s_1, \rho_1, \s_2\}$. Then
$g_m \cdots g_1 = h_n \cdots h_1$.
\end{proposition}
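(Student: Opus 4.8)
The plan is to produce an explicit anti-automorphism $\psi$ of $\G$ that fixes each of the three elements $\s_1$, $\rho_1$, $\s_2$, and then to obtain the conclusion by applying $\psi$ to both sides of the hypothesized equation: an anti-automorphism reverses products, so $\psi(g_1\cdots g_m) = \psi(g_m)\cdots\psi(g_1) = g_m\cdots g_1$ and likewise $\psi(h_1\cdots h_n) = h_n\cdots h_1$, whence $g_m\cdots g_1 = h_n\cdots h_1$.

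To build $\psi$, I would combine two standard maps. On any group, inversion $\iota\colon g\mapsto g^{-1}$ is an anti-automorphism, and conjugation $c\colon g\mapsto \rho_1 g \rho_1$ is an automorphism (recall $\rho_1^{-1}=\rho_1$ since $\rho_1$ is an involution). Hence the composite $\psi := c\circ\iota$, given explicitly by $\psi(g)=\rho_1 g^{-1}\rho_1$, is an anti-automorphism of $\G$. The key check is that $\psi$ fixes the three generators in the list. Using $\s_1=\rho_0\rho_1$, so $\s_1^{-1}=\rho_1\rho_0$, we get $\psi(\s_1)=\rho_1(\rho_1\rho_0)\rho_1=\rho_0\rho_1=\s_1$; similarly, using $\s_2=\rho_1\rho_2$, we get $\psi(\s_2)=\rho_1(\rho_2\rho_1)\rho_1=\rho_1\rho_2=\s_2$; and clearly $\psi(\rho_1)=\rho_1\rho_1\rho_1=\rho_1$. (Note that $\psi$ does \emph{not} fix the original generators $\rho_0,\rho_2$ individually, which is why it is cleaner to phrase everything via $\{\s_1,\rho_1,\s_2\}$.)

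There is essentially no obstacle here: the only points requiring a moment's care are verifying that $\psi$ genuinely fixes $\s_1$ and $\s_2$, and keeping track of the fact that $\psi$, being an anti-automorphism, reverses the order of a product, which is precisely the source of the reversal in the statement. An alternative, more pedestrian route would be an induction on $m+n$ using the rewriting identities $\rho_1\s_i=\s_i^{-1}\rho_1$, but this invites case analysis and the anti-automorphism argument subsumes it in one stroke.
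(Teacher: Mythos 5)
Your proof is correct and is essentially the paper's argument in different packaging: the anti-automorphism $\psi(g)=\rho_1 g^{-1}\rho_1$ you construct is exactly the composite of the paper's two steps (conjugate the relation by $\rho_1$, which inverts each of $\s_1,\rho_1,\s_2$ in place, then invert both sides to reverse the order). Both proofs rest on the same observation that this map fixes each element of $\{\s_1,\rho_1,\s_2\}$ while reversing products.
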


\begin{proof}
We note that conjugation by $\rho_1$ inverts $\s_1$ and $\s_2$, and it fixes $\rho_1$ (which is the same as
inverting $\rho_1$, since it is an involution). Therefore, conjugating the relation
$g_1 \cdots g_m = h_1 \cdots h_n$ by $\rho_1$, we obtain $g_1^{-1} \cdots g_m^{-1} = h_1^{-1} \cdots h_n^{-1}$.
Inverting both sides then gives the desired result.
\end{proof}

If $\calP$ is a tight regular polyhedron, then every element of $\G(\calP)$ can be written uniquely in the form
$\s_1^i \s_2^j$ or $\s_1^i \rho_1 \s_2^j$, with $i \in \Z_p$ and $j \in \Z_q$. In particular,
$\s_2^{-1} \s_1$ can be written this way. We make the following observation:

\begin{proposition}
\label{prop:i-and-j-subgps}
Let $\G = \langle \rho_0, \rho_1, \rho_2 \rangle$ be an sggi with $\sigma_1=\rho_0 \rho_1$ and $\sigma_2=\rho_1 \rho_2$.
\begin{enumerate}
\item If $\s_2^{-1} \s_1 = \s_1^i \s_2^j$, then $\s_1^{i+1}$ and $\s_2^{j-1}$ are each inverted when conjugating by
$\rho_0$, $\rho_1$, and $\rho_2$. In particular, $\langle \s_1^{i+1} \rangle$ and $\langle \s_2^{j-1} \rangle$
are normal subgroups of $\G$.
\item If $\s_2^{-1} \s_1 = \s_1^i \rho_1 \s_2^j$, then $\s_1^{i-2}$ is inverted when conjugating by $\rho_0$, $\rho_1$ and $\sigma_2$,
and commutes with $\rho_2$, whereas $\s_2^{j+2}$ is inverted when conjugating by $\rho_1$ and $\rho_2$ and $\sigma_1$, and commutes
with $\rho_0$. In particular, $\langle \s_1^{i-2} \rangle$ and $\langle \s_2^{j+2} \rangle$ are
normal subgroups of $\G$.
\end{enumerate}
\end{proposition}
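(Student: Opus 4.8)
The plan is to work directly from the hypothesis, using the relation $\rho_1 \s_1^k = \s_1^{-k} \rho_1$ and $\rho_1 \s_2^k = \s_2^{-k} \rho_1$ (conjugation by $\rho_1$ inverts both rotations), together with the string relation $(\rho_0\rho_2)^2 = 1$, i.e. $\rho_0$ and $\rho_2$ commute. The key auxiliary fact I would use repeatedly is \pref{reverse-rel}: if a word identity in the letters $\{\s_1,\rho_1,\s_2\}$ holds, so does the ``reversed'' identity. I would also record at the start the elementary conjugation formulas $\rho_0^{-1}\s_1\rho_0 = \s_1^{-1}$, $\rho_0^{-1}\s_2\rho_0 = \rho_0\rho_1\rho_2\rho_0 = \s_1^{-1}\s_2^{-1}\s_1$ (using $\rho_0\rho_2 = \rho_2\rho_0$), and dually $\rho_2^{-1}\s_2\rho_2 = \s_2^{-1}$, $\rho_2^{-1}\s_1\rho_2 = \s_2\s_1^{-1}\s_2^{-1}$. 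These let me translate ``$\rho_0$ inverts'' and ``$\rho_2$ inverts'' statements into statements about how $\s_2^{-1}\s_1$ interacts with powers of $\s_1$ and $\s_2$.

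For part (a): from $\s_2^{-1}\s_1 = \s_1^i\s_2^j$ we get $\s_1 = \s_2\s_1^i\s_2^j$. Conjugating this relation by $\rho_1$ (which inverts everything and reverses nothing since it is a single letter on each side after we move it through) and also applying \pref{reverse-rel} gives companion identities; combining them should yield $\s_2\s_1^{i+1} = \s_1^{-(i+1)}\s_2$ and similarly $\s_1^{-1}\s_2^{j-1} = \s_2^{j-1}\s_1$, which say precisely that conjugation by $\s_2$ inverts $\s_1^{i+1}$ and conjugation by $\s_1$ inverts $\s_2^{j-1}$. Since $\rho_1$ already inverts both $\s_1^{i+1}$ and $\s_2^{j-1}$, and since $\rho_0 = \s_1\rho_1$, $\rho_2 = \rho_1\s_2$, it follows that $\rho_0$ and $\rho_2$ also invert each of $\s_1^{i+1}$ and $\s_2^{j-1}$ (invert-then-invert via an element that inverts is again invert, tracking the bookkeeping carefully). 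As $\G = \langle\rho_0,\rho_1,\rho_2\rangle$, each of $\langle\s_1^{i+1}\rangle$, $\langle\s_2^{j-1}\rangle$ is normal.

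For part (b): from $\s_2^{-1}\s_1 = \s_1^i\rho_1\s_2^j$ the presence of the middle $\rho_1$ changes the arithmetic, which is why the exponents shift by $2$ rather than $1$ and why the symmetry between the four generators breaks. I would rewrite the hypothesis as $\s_1 = \s_2\s_1^i\rho_1\s_2^j = \s_2\s_1^i\s_2^{-j}\rho_1$, conjugate by $\rho_1$ and apply \pref{reverse-rel} to get a second relation, then eliminate $\rho_1$ between them to obtain a purely-rotation identity; manipulating it should show $\s_2\s_1^{i-2}\s_2^{-1} = \s_1^{-(i-2)}$ (so $\s_2$, hence also $\sigma_2$-conjugation, inverts $\s_1^{i-2}$) while the $\rho_2$-conjugate of $\s_1^{i-2}$ comes out equal to $\s_1^{i-2}$ itself (using $\rho_2 = \rho_1\s_2$ and the two facts just derived), explaining why $\s_1^{i-2}$ commutes with $\rho_2$ but is inverted by $\rho_0,\rho_1,\sigma_2$. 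The claims about $\s_2^{j+2}$ follow by the dual argument — replacing $\calP$ by $\calP^\delta$, which swaps $\rho_0\leftrightarrow\rho_2$ and sends $\s_i \mapsto (\s_{3-i}^{-1})'$ as recorded earlier, turns the hypothesis for $\s_2^{-1}\s_1$ into the corresponding hypothesis in the dual and carries the first half of (b) to the second. Normality of $\langle\s_1^{i-2}\rangle$ and $\langle\s_2^{j+2}\rangle$ again follows since each is inverted (hence normalized) by a generating set of $\G$.

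The main obstacle I anticipate is purely the bookkeeping in part (b): keeping straight the signs of exponents when a stray $\rho_1$ sits in the middle of the defining word, and in particular verifying the asymmetric conclusions (inverted by three of the four elements $\{\rho_0,\rho_1,\rho_2,\sigma_i\}$, commuting with the fourth). The cleanest route is probably to first prove everything about $\s_1^{i-2}$ from scratch and then invoke duality for the $\s_2^{j+2}$ half, rather than redo the symmetric computation; I would be careful that the duality dictionary sends $\{p,q\}$-type data to $\{q,p\}$-type data and matches the hypothesis correctly before quoting it.
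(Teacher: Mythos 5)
Your overall strategy---derive from the given relation and its reverse (\pref{reverse-rel}) how the rotations conjugate the relevant powers of $\sigma_1$ and $\sigma_2$, then propagate to the reflections---is exactly the paper's, and your part (b) is essentially sound: the identity $\sigma_2\sigma_1^{i-2}\sigma_2^{-1} = \sigma_1^{-(i-2)}$ is the correct target (the paper reaches it via the palindromic elements $\sigma_1^i\rho_1\sigma_2^j\rho_1\sigma_1^i$ and $\sigma_2^j\rho_1\sigma_1^i\rho_1\sigma_2^j$ together with $(\sigma_1\sigma_2)^2=1$), and the duality dictionary, which sends the hypothesis with parameters $(i,j)$ to the dual hypothesis with parameters $(-j,-i)$, does carry the first half of (b) to the second.

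Part (a), however, contains a genuine sign error that would derail the argument if carried out as written. From $\sigma_2^{-1}\sigma_1 = \sigma_1^i\sigma_2^j$ and its reverse $\sigma_1\sigma_2^{-1} = \sigma_2^j\sigma_1^i$ one gets
\[
\sigma_2^{-1}\sigma_1^{i+1} = (\sigma_2^{-1}\sigma_1)\sigma_1^i = \sigma_1^i\sigma_2^j\sigma_1^i = \sigma_1^i(\sigma_1\sigma_2^{-1}) = \sigma_1^{i+1}\sigma_2^{-1},
\]
so $\sigma_2$ \emph{centralizes} $\sigma_1^{i+1}$; it does not invert it, contrary to your claimed identity $\sigma_2\sigma_1^{i+1} = \sigma_1^{-(i+1)}\sigma_2$. (Likewise $\sigma_1$ centralizes $\sigma_2^{j-1}$; your identity $\sigma_1^{-1}\sigma_2^{j-1}=\sigma_2^{j-1}\sigma_1$ is neither a commuting nor an inverting statement.) This is not mere bookkeeping: the content of part (a) is precisely that all three reflections invert $\sigma_1^{i+1}$, and that follows because $\rho_1$ inverts it while $\sigma_2=\rho_1\rho_2$ fixes it under conjugation, which forces $\rho_2$ to invert it too. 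With your signs, $\rho_2$ would come out \emph{commuting} with $\sigma_1^{i+1}$, contradicting the statement; the heuristic ``invert-then-invert via an element that inverts is again invert'' is backwards, since composing two inverting conjugations gives the identity action. The asymmetry you correctly identify in part (b)---where $\sigma_2$ genuinely inverts $\sigma_1^{i-2}$ and therefore $\rho_2$ commutes with it---is exactly the behaviour you have mistakenly imported into part (a).
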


\begin{proof}
By \pref{reverse-rel}, if $\s_2^{-1} \s_1 = \s_1^i \s_2^j$, then also $\s_1 \s_2^{-1} =
\s_2^j \s_1^i$.
Therefore,
\begin{align*}
\s_2^{-1} \s_1^{i+1} &= \s_1^i \s_2^j \s_1^i \\
&= \s_1^{i+1} \s_2^{-1}.
\end{align*}
Thus, $\s_2^{-1}$ commutes with $\s_1^{i+1}$. Since $\s_2^{-1} = \rho_2 \rho_1$ and
conjugation by $\rho_1$ inverts $\s_1^{i+1}$, it follows that conjugation by $\rho_2$ also inverts $\s_1^{i+1}$, and $\langle \sigma_1^{i+1} \rangle$ is normal. A similar idea using $\sigma_2^j \sigma_1^i \sigma_2^j$ instead of $\sigma_1^i \sigma_2^j \sigma_1^i$ proves that $\langle \sigma_2^{j-1} \rangle$ is normal.

For the second part we use the elements $\sigma_1^i \rho_1 \sigma_2^j \rho_1 \sigma_1^i$ and $\sigma_2^j \rho_1 \sigma_1^i \rho_1 \sigma_2^j$ to show that $\sigma_2 \sigma_1^{i-1} = \sigma_1^{-i+1} \sigma_2$ and that $\sigma_1 \sigma_2^{-j-1} = \sigma_2^{j+1} \sigma_1^{-1}$, respectively. Using Equation (\ref{eq:sinvolution}) it can now be verified that $\sigma_2 \sigma_1^{i-2} = \sigma_1^{-i+2} \sigma_2$ and $\sigma_1 \sigma_2^{-j-2} = \sigma_2^{j+2} \sigma_1$, and the statement follows.
\end{proof}

\begin{theorem}
\label{thm:first-rels}
Let $\calP$ be a tight regular polyhedron of type $\{p, q\}$. If $\calP$ is orientably regular, then for some
$i$ and $j$, the group $\G(\calP)$ is the quotient of $[p, q]$ by the extra relation $\s_2^{-1} \s_1 = \s_1^i \s_2^j$.
If $\calP$ is non-orientably regular, then for some $i$ and $j$, the relation $\s_2^{-1} \s_1 = \s_1^i \rho_1 \s_2^j$ holds.
\end{theorem}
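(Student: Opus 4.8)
The plan is to use tightness to reduce $\s_2^{-1}\s_1$ to one of its two canonical forms, to single out the right form in the orientable case by a coset/parity argument, and to prove along the way the key claim $(\star)$: \emph{if $\calP$ is tight of type $\{p,q\}$ and $\s_2^{-1}\s_1=\s_1^i\s_2^j$ holds in $\G(\calP)$, then $\G(\calP)$ is exactly the quotient of $[p,q]$ by this one extra relation.} Recall (as noted just before \pref{i-and-j-subgps}) that tightness of type $\{p,q\}$ gives $|\G(\calP)|=2pq$ and a unique expression $\s_1^i\s_2^j$ or $\s_1^i\rho_1\s_2^j$ for each element, with $i\in\Z_p$, $j\in\Z_q$; in particular this applies to $\s_2^{-1}\s_1$. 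If $\calP$ is orientably regular then $\G^+(\calP)=\langle\s_1,\s_2\rangle$ has index $2$, so $\rho_1\notin\G^+(\calP)$ and every element $\s_1^i\rho_1\s_2^j$ lies in the nontrivial coset $\rho_1\G^+(\calP)$; since $\s_2^{-1}\s_1\in\G^+(\calP)$ it must have the first form, and $(\star)$ finishes this case. For the non-orientable case, observe that every defining relator in the presentation supplied by $(\star)$ (the relators of $[p,q]$, together with the relator $\s_2^{-1}\s_1\s_2^{-j}\s_1^{-i}$) has even length, so by the criterion recalled in Section~\ref{s:background} the hypothesis of $(\star)$ forces $\calP$ to be orientably regular; contrapositively, if $\calP$ is non-orientably regular then $\s_2^{-1}\s_1\neq\s_1^i\s_2^j$ for all $i,j$, so by uniqueness of the canonical form $\s_2^{-1}\s_1=\s_1^i\rho_1\s_2^j$ for some $i,j$.

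Everything thus reduces to $(\star)$. Let $\Lambda$ be the quotient of $[p,q]$ by the single relation $\s_2^{-1}\s_1=\s_1^i\s_2^j$, with images $\rho_0',\rho_1',\rho_2'$ of the generators and rotations $\s_1'=\rho_0'\rho_1'$, $\s_2'=\rho_1'\rho_2'$. Sending $\rho_k'\mapsto\rho_k$ defines a surjection $\Lambda\twoheadrightarrow\G(\calP)$, so it suffices to prove $|\Lambda|\le 2pq$. Since $(\s_1')^p=(\s_2')^q=(\rho_1')^2=1$ in $\Lambda$, it is enough to show $\Lambda=\langle\s_1'\rangle\langle\rho_1'\rangle\langle\s_2'\rangle$, i.e.\ that $\Lambda$ is tight; by \pref{move-s1-s2} this amounts to rewriting every $(\s_2')^a(\s_1')^b$ as $(\s_1')^{*}(\s_2')^{*}$ or $(\s_1')^{*}\rho_1'(\s_2')^{*}$.

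I would prove that $\Lambda$ is tight by induction on $p+q$, and I expect this step to be the main obstacle. By \pref{i-and-j-subgps}(a), applied to the sggi $\Lambda$, the cyclic subgroups $\langle(\s_1')^{i+1}\rangle$ and $\langle(\s_2')^{j-1}\rangle$ are normal in $\Lambda$. If $(\s_1')^{i+1}\neq 1$ in $\Lambda$, then $\Lambda/\langle(\s_1')^{i+1}\rangle$ is the analogous quotient of $[p',q]$ with $p'=\gcd(i+1,p)<p$, hence tight by the inductive hypothesis, so $\Lambda$ is tight by \pref{tight-quo-means-tight}; symmetrically, if $(\s_2')^{j-1}\neq 1$ we reduce $q$. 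In the remaining case $(\s_1')^{i+1}=1$ and $(\s_2')^{j-1}=1$, so $(\s_1')^i=(\s_1')^{-1}$ and $(\s_2')^j=\s_2'$, and the relation collapses to $(\s_2')^{-1}\s_1'=(\s_1')^{-1}\s_2'$; a short computation using this relation, its reverse form from \pref{reverse-rel}, and $(\s_1'\s_2')^2=1$ shows that $\langle(\s_1')^2\rangle$ is normal in $\Lambda$ with $\Lambda/\langle(\s_1')^2\rangle$ a quotient of $[2,q]$. Since $[2,q]\cong\Z_2\times D_q$ is tight and any quotient of a tight group (with its distinguished generators) is tight, $\Lambda/\langle(\s_1')^2\rangle$ is tight, and \pref{tight-quo-means-tight} shows $\Lambda$ is tight; the cases $p\le 2$ or $q\le 2$, where $\Lambda$ itself is already a quotient of $[2,q]$ or $[p,2]$, form the base of the induction. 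This establishes $(\star)$ and completes the proof.
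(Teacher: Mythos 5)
Your proposal is correct and follows essentially the same route as the paper: both identify the canonical form of $\s_2^{-1}\s_1$ forced by tightness, use the parity of the resulting relator to separate the orientable and non-orientable cases, and show that the single relation $\s_2^{-1}\s_1=\s_1^i\s_2^j$ already makes the quotient of $[p,q]$ tight by passing to the normal subgroups $\langle\s_1^{i+1}\rangle$ and $\langle\s_2^{j-1}\rangle$ and invoking \pref{tight-quo-means-tight}. The only cosmetic difference is that you package this reduction as an induction on $p+q$ and finish the terminal case $\s_2^{-1}\s_1=\s_1^{-1}\s_2$ by a further quotient onto a quotient of $[2,q]$, whereas the paper verifies tightness of that terminal case directly via \pref{move-s1-s2}.
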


\begin{proof}
Let $\G$ be the quotient of $[p, q]$ by the relation $\s_2^{-1} \s_1 = \s_1^i \s_2^j$. By \pref{i-and-j-subgps} (a), the subgroups
$\langle \s_1^{i+1} \rangle$ and $\langle \s_2^{j-1} \rangle$ are normal. In the quotient of $\G$ by both of these subgroups,
the relation $\s_2^{-1} \s_1 = \s_1^{-1} \s_2$ holds, and by  \pref{reverse-rel}, the relation $\sigma_1 \sigma_2^{-1} = \sigma_2 \sigma_1^{-1}$
also holds. Furthermore, $\s_2 \s_1 = \s_1^{-1} \s_2^{-1}$ in the automorphism group of any regular polyhedron. 
Therefore
\[ \s_2^r \s_1 = \s_2^{r-1} \s_1^{-1} \s_2^{-1} = \s_2^{r-2} \s_1 \s_2^{-2}, \]
and it follows that 
for any $r$, $\s_2^r \s_1 = \s_1^{(-1)^r} \s_2^{-r}$. Therefore, $\s_2^r \s_1^s = \s_1^{s(-1)^r} \s_2^{r(-1)^s}$,
and by \pref{move-s1-s2}, it follows that this quotient is tight. Then by two applications of \pref{tight-quo-means-tight}, we see that $\G$ is itself tight.
Furthermore, note that the relations of $\G$ are all even.

Now, let $\calP$ be a tight regular polyhedron of type $\{p, q\}$. Then for some $i$ and $j$, either the relation
$\s_2^{-1} \s_1 = \s_1^i \s_2^j$ holds or the relation $\s_2^{-1} \s_1 = \s_1^i \rho_1 \s_2^j$ holds.
If $\calP$ is orientably regular, it must be the former, since the latter relation is odd. The above analysis shows that
this relation alone is enough to guarantee tightness, and so by \pref{tight-covers}, $\G(\calP)$ must be this quotient of $[p, q]$.
On the other hand, if $\calP$ is non-orientably regular, then the relation $\s_2^{-1} \s_1 = \s_1^i \rho_1 \s_2^j$
must hold, since otherwise, $\G(\calP)$ would be the group $\G$ above, all of whose relations are even.
\end{proof}

So we see that for tight orientably regular polyhedra, their automorphism groups are single-relator quotients of
string Coxeter groups. The same is not true, in general, of tight non-orientably regular polyhedra. However,
two extra relations always suffice. We need the following lemmas.

\begin{lemma}
\label{lem:second-rel}
Let $\calP$ be a tight non-orientably regular polyhedron, with $\G(\calP) = \langle \s_1, \rho_1, \s_2 \rangle$
and $\G(\calP^{\delta}) = \langle \os_1, \orho_1, \os_2 \rangle$. Then either $\s_2^{-2} \s_1 = \s_1^a \s_2^b$ holds in $\G(\calP)$ (for some $a$ and $b$), or $\os_2^{-2} \os_1 = \os_1^a \os_2^b$ holds in $\G(\calP^{\delta})$.
\end{lemma}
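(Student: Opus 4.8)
The plan is to exploit the self-dual nature of the statement together with the structural constraints coming from Proposition~\ref{prop:i-and-j-subgps}(b) and Equation~(\ref{eq:sinvolution}). By \tref{first-rels} we may assume $\G(\calP)$ is generated by $\s_1, \rho_1, \s_2$ subject to (among other relations) $\s_2^{-1}\s_1 = \s_1^i \rho_1 \s_2^j$ for some $i,j$. The goal is to show that $\s_2^{-2}\s_1$ lies in $\langle \s_1 \rangle\langle \s_2 \rangle$ — i.e., has no $\rho_1$ in its (essentially unique) normal form $\s_1^a \rho_1^c \s_2^b$ — or that the dual statement holds, which by the dual dictionary ($\rho_i \leftrightarrow \rho_{2-i}'$, hence $\s_1 \leftrightarrow (\s_2^{-1})'$, $\s_2 \leftrightarrow (\s_1^{-1})'$) amounts to $\s_1^{2}\s_2^{-1}$, equivalently $\s_1\s_2^{-1}\s_1$, or a close variant, lying in $\langle \s_1\rangle\langle \s_2\rangle$.

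First I would compute $\s_2^{-2}\s_1$ directly. Writing $\s_2^{-2}\s_1 = \s_2^{-1}(\s_2^{-1}\s_1) = \s_2^{-1}\s_1^i \rho_1 \s_2^j$, I would then try to push the leading $\s_2^{-1}$ to the right. One relation to use is the consequence $\s_2 \s_1^{i-2} = \s_1^{-i+2}\s_2$ obtained in the proof of \pref{i-and-j-subgps}(b), together with $\s_1\s_2^{-j-2} = \s_2^{j+2}\s_1$; combining these with $(\s_1\s_2)^2 = 1$ gives enough commutation to move powers of $\s_2$ through powers of $\s_1$ at the cost of picking up controlled exponents. The upshot should be an identity expressing $\s_2^{-2}\s_1$ (or $\s_1^2\s_2^{-1}$) in the form $\s_1^{(\cdot)}\rho_1^{c}\s_2^{(\cdot)}$ where the parity of $c$ — i.e.\ whether a $\rho_1$ survives — is governed by the parities of $i$ and $j$, or by a divisibility condition relating $i-2$, $j+2$, $p$, and $q$. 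The point is that $\s_2^{-2}\s_1$ always contains an \emph{even} number of $\rho_1$'s from the bare relation since $\s_2^{-1}\s_1$ contributes one $\rho_1$ but conjugating another $\s_2^{-1}$ past it uses $\s_2\s_1^k = \s_1^{-k}\s_2$ type moves that may or may not introduce a second $\rho_1$ depending on how $\s_2^{-1}$ interacts with $\s_1^i\rho_1$; carefully $\s_2^{-1}(\s_1^i\rho_1) = (\s_2^{-1}\s_1^i)\rho_1$ and $\s_2^{-1}\s_1^i$ should be reducible using the relation $\s_2^{-2}\s_1 = \s_2^{-1}\s_1^i\rho_1\s_2^j$ itself, iterated — which is circular, so the real engine must be the normal subgroups $\langle \s_1^{i-2}\rangle$ and $\langle \s_2^{j+2}\rangle$ from \pref{i-and-j-subgps}(b) that let me reduce $i$ and $j$ modulo small numbers.

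The cleaner route, which I would actually pursue, is: pass to the quotient $\overline{\G}$ of $\G(\calP)$ by the normal subgroups $\langle \s_1^{i-2}\rangle$ and $\langle \s_2^{j+2}\rangle$. In $\overline\G$ the relation becomes $\s_2^{-1}\s_1 = \s_1^{2}\rho_1\s_2^{-2}$ (reading exponents mod the now-trivial powers, so $i \equiv 2$, $j\equiv -2$), and here the algebra is rigid enough to compute $\s_2^{-2}\s_1$ explicitly: using $(\s_1\s_2)^2=1$ and the two commutation relations (which in $\overline\G$ say $\s_2$ centralizes nothing new but $\s_2\s_1^{i-2}=\s_1^{-i+2}\s_2$ degenerates), one finds a short word, and I expect $\s_2^{-2}\s_1$ to reduce to an element of $\langle \s_1\rangle\langle \s_2\rangle$ outright, or else $\s_1^2\s_2^{-1}$ (the dual quantity) does. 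Then I lift: if $\overline{\s_2^{-2}\s_1} \in \langle\overline{\s_1}\rangle\langle\overline{\s_2}\rangle$, then in $\G(\calP)$ itself $\s_2^{-2}\s_1 = \s_1^{a}\s_2^{b}\cdot n$ where $n \in \langle\s_1^{i-2}\rangle\langle\s_2^{j+2}\rangle \subseteq \langle\s_1\rangle\langle\s_2\rangle$ (using normality to collect), so $\s_2^{-2}\s_1 \in \langle\s_1\rangle\langle\s_2\rangle$ as required; the dual case lifts identically in $\G(\calP^\delta)$.

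The main obstacle I anticipate is the bookkeeping of $\rho_1$-parity when commuting $\s_2^{-1}$ past the mixed word $\s_1^i\rho_1\s_2^j$: because $\rho_1$ conjugates $\s_1\mapsto\s_1^{-1}$ and $\s_2\mapsto\s_2^{-1}$, every time I move a $\rho_1$ across a power of $\s_i$ the exponents flip sign, and I must track these flips alongside the reductions mod $i-2$ and $j+2$ to be sure the final normal form has the claimed shape — and crucially to see why exactly one of $\calP$, $\calP^\delta$ works rather than neither. I expect this to come down to: the bare relation already forces $\s_2^{-2}\s_1$ to have $\rho_1$-exponent congruent to $0$ \emph{or} $\s_1^2\s_2^{-1}$ to have $\rho_1$-exponent congruent to $0$, a dichotomy that I will verify by writing both quantities in normal form in $\overline\G$ and observing that their $\rho_1$-exponents sum to an even number, so at least one is even.
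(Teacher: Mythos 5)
Your overall framing is right: by tightness the only question is whether the normal form of $\s_2^{-2}\s_1$ contains a $\rho_1$, and the lemma amounts to showing that $\s_2^{-2}\s_1$ and the dual quantity $\s_2^{-1}\s_1^2$ cannot both have $\rho_1$-exponent $1$. But the proposal stops short of the one computation that makes this true, and the two devices offered in its place do not close the gap. First, passing to the quotient by $\langle \s_1^{i-2}\rangle$ and $\langle \s_2^{j+2}\rangle$ only shrinks the exponents to $i\equiv 2$, $j\equiv -2$; the relation becomes $\s_2^{-1}\s_1=\s_1^2\rho_1\s_2^{-2}$, and evaluating $\s_2^{-2}\s_1=\s_2^{-1}\s_1^2\rho_1\s_2^{-2}$ there runs into exactly the circularity you flagged for the unquotiented group, since $\s_2^{-1}\s_1^2$ is precisely the dual unknown; no new relation appears in the quotient to break it. Second, the closing inference ``their $\rho_1$-exponents sum to an even number, so at least one is even'' is not valid ($1+1$ is even), and in a non-orientably regular group there is no sign homomorphism letting you read the $\rho_1$-exponent off word length, so the parity bookkeeping cannot be waved through.

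What is actually true, and what drives the paper's proof, is that the two parities are \emph{complementary}. Concretely,
\begin{equation*}
(\s_2^{-2}\s_1)^{-1}(\s_2^{-1}\s_1^2)=\s_1^{-1}\s_2\s_1^2=\s_1^{-2}(\s_2^{-1}\s_1)=\s_1^{i-2}\rho_1\s_2^{j},
\end{equation*}
using only $(\s_1\s_2)^2=1$ and the relation from \tref{first-rels}; then writing $\s_2^{-2}\s_1=\s_1^a\rho_1^c\s_2^b$ and pushing $\s_1^{i-2}$ left past $\s_2^b$ (legitimate because conjugation by $\s_2$ inverts $\s_1^{i-2}$, by \pref{i-and-j-subgps}(b)) shows that $\s_2^{-1}\s_1^2$ has $\rho_1$-exponent $c+1$. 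Hence exactly one of $\s_2^{-2}\s_1$, $\s_2^{-1}\s_1^2$ is $\rho_1$-free, which is the lemma after translating the second case to $\G(\calP^{\delta})$. The paper reaches the same cancellation from the other end: it assumes the bad case $\s_2^{-2}\s_1=\s_1^a\rho_1\s_2^b$, transports it to the dual as $\s_2^{-1}\s_1^2=\s_1^{-b}\rho_1\s_2^{-a}$, and substitutes this into the expansion of $\s_2^{-2}\s_1$ in $\G(\calP^{\delta})$ so that the two $\rho_1$'s cancel. Your plan is salvageable along these lines, but as written the decisive step is asserted rather than proved.
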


\begin{proof}
Since $\calP$ is tight, either $\s_2^{-2} \s_1 = \s_1^a \s_2^b$, or else $\s_2^{-2} \s_1 = \s_1^a \rho_1 \s_2^b$.
In the first case, we are done. Otherwise, consider $\calP^{\delta}$. Each relation of $\G(\calP)$ yields a relation
in $\G(\calP^{\delta})$ by sending $\rho_1$ to $\orho_1$ and $\s_k$ to $\os_{3-k}^{-1}$. So if the relation
$\s_2^{-2} \s_1 = \s_1^a \rho_1 \s_2^b$ holds in $\G(\calP)$, it follows that $\os_1^2 \os_2^{-1} = \os_2^{-a} \orho_1 \os_1^{-b}$
holds in $\G(\calP^{\delta})$, and from this it follows (by \pref{reverse-rel}) that
$\os_2^{-1} \os_1^2 = \os_1^{-b} \orho_1 \os_2^{-a}$.  Now, $\calP^{\delta}$ is also a tight non-orientably regular polyhedron, so
\tref{first-rels} implies that the relation $\os_2^{-1} \os_1 = \os_1^i \orho_1 \os_2^j$ holds in $\G(\calP^{\delta})$ for some $i$ and $j$.
Then, working in $\G(\calP^{\delta})$ and using \pref{i-and-j-subgps} (b), we get that:
\begin{align*}
\os_2^{-2} \os_1 &= \os_2^{-1} (\os_2^{-1} \os_1) \\
&= \os_2^{-1} \os_1^i \orho_1 \os_2^j \\
&= \os_2^{-1} \os_1^{i-2} \os_1^2 \orho_1 \os_2^j \\
&= \os_1^{2-i} \os_2^{-1} \os_1^2 \orho_1 \os_2^j \\
&= \os_1^{2-i} \os_1^{-b} \orho_1 \os_2^{-a} \orho_1 \os_2^j \\
&= \os_1^{2-i-b} \os_2^{a+j},
\end{align*}
and so a relation of the desired type holds in $\G(\calP^{\delta})$.

\end{proof}

\begin{lemma}
\label{lem:a-and-b-subgps}
Let $\G = \langle \rho_0, \rho_1, \rho_2 \rangle$ be an sggi with $\sigma_1 = \rho_0 \rho_1$ and $\sigma_2 = \rho_1 \rho_2$.
Suppose that $\G$ satisfies the relations $\s_2^{-1} \s_1 = \s_1^i \rho_1 \s_2^j$
and $\s_2^{-2} \s_1 = \s_1^a \s_2^b$. Then $\s_2^{-1} \s_1^2 = \s_1^{i-a} \rho_1 \s_2^{b-j-2}$.
Furthermore, conjugation by $\s_2$ inverts $\s_1^{i-3-a}$ and $\s_1$ commutes with $\s_2^{b-2}$, and in
particular, the subgroups $\langle \s_1^{i-3-a} \rangle$ and $\langle \s_2^{b-2} \rangle$ are normal.
\end{lemma}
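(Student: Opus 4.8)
The plan is to prove the three assertions by direct computation in $\G$, in the spirit of the proof of \pref{i-and-j-subgps}~(b): first establish the displayed identity for $\s_2^{-1}\s_1^2$, then read off the two conjugation statements from it together with the relation $(\s_1\s_2)^2 = 1$, which holds in $\G$ because $\s_1\s_2 = \rho_0\rho_2$, and finally deduce normality.

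First I would merge the two hypotheses. Writing $\s_2^{-2}\s_1 = \s_2^{-1}(\s_2^{-1}\s_1)$ and substituting $\s_2^{-1}\s_1 = \s_1^i\rho_1\s_2^j$ gives $\s_2^{-1}\s_1^i\rho_1\s_2^j = \s_1^a\s_2^b$; cancelling $\s_2^j$ and then $\rho_1$ on the right yields the auxiliary relation $\s_2^{-1}\s_1^i = \s_1^a\s_2^{b-j}\rho_1$. Then I would compute $\s_2^{-1}\s_1^2$ using the elementary identity $\s_2^{-1}\s_1^2 = (\s_2^{-1}\s_1)\,\s_2\,(\s_2^{-1}\s_1) = \s_1^i\rho_1\s_2^{j+1}\s_1^i\rho_1\s_2^j$, and simplify the inner factor $\s_2^{j+1}\s_1^i$ by writing $\s_2^{j+1} = \s_2^{j+2}\s_2^{-1}$, applying the auxiliary relation to $\s_2^{-1}\s_1^i$, and pushing the resulting power of $\s_1$ past $\s_2^{j+2}$ using that conjugation by $\s_1$ inverts $\s_2^{j+2}$ (\pref{i-and-j-subgps}~(b)). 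Commuting the $\rho_1$'s to the right via $\rho_1\s_1^k = \s_1^{-k}\rho_1$ and $\rho_1\s_2^k = \s_2^{-k}\rho_1$, and absorbing the leftover shifts with $(\s_1\s_2)^2 = 1$, this should collapse to $\s_1^{i-a}\rho_1\s_2^{b-j-2}$.

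With the identity in hand, write $I = i-a$ and $J = b-j-2$, so that $\s_2^{-1}\s_1^2 = \s_1^{I}\rho_1\s_2^{J}$ and, by \pref{reverse-rel}, $\s_1^2\s_2^{-1} = \s_2^{J}\rho_1\s_1^{I}$. Evaluating $\s_1^{I}\rho_1\s_2^{J}\rho_1\s_1^{I}$ in two ways --- as $(\s_2^{-1}\s_1^2)\,\rho_1\s_1^{I} = \s_2^{-1}\s_1^{2-I}\rho_1$ and as $\s_1^{I}\rho_1\,(\s_2^{J}\rho_1\s_1^{I}) = \s_1^{I}\rho_1\s_1^2\s_2^{-1} = \s_1^{I-2}\s_2\rho_1$ --- and cancelling $\rho_1$ gives $\s_2^{-1}\s_1^{2-I} = \s_1^{I-2}\s_2$; substituting $\s_2^{-1} = \s_1\s_2\s_1$ and cancelling a leading $\s_1$ then gives $\s_2\s_1^{3-I} = \s_1^{I-3}\s_2$, i.e.\ conjugation by $\s_2$ inverts $\s_1^{I-3} = \s_1^{i-3-a}$. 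An analogous computation starting from $\s_2^{J}\rho_1\s_1^{I}\rho_1\s_2^{J}$ yields $\s_1^2\s_2^{-J-1} = \s_2^{J+1}\s_1^{-2}$; combining this with $(\s_1\s_2)^2 = 1$ and the relation $\s_2^{-2}\s_1 = \s_1^a\s_2^b$ should give that $\s_1$ commutes with $\s_2^{b-2}$. For normality, note that $\rho_0 = \s_1\rho_1$ and $\rho_2 = \rho_1\s_2$; since $\rho_1$ inverts $\s_1$ and $\s_2$, the two facts just proved imply that each of $\rho_0,\rho_1,\rho_2$ normalises $\langle\s_1^{i-3-a}\rangle$ and $\langle\s_2^{b-2}\rangle$, so both subgroups are normal in $\G$.

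I expect the main obstacle to be the first computation: tracking the lone reflection $\rho_1$ through the word, and correctly handling the signs produced by conjugation --- recall that $\s_1^{i-2}$ and $\s_2^{j+2}$ are \emph{inverted}, not centralised, by the relevant generators --- so that the two hypotheses combine to yield precisely the exponents $i-a$ and $b-j-2$, and not merely a word that is equivalent to $\s_1^{i-a}\rho_1\s_2^{b-j-2}$ modulo relations already known to hold. The deduction of the conjugation statements and of the normality is then routine bookkeeping of the kind already carried out in \pref{i-and-j-subgps}.
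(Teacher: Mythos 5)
The second and third parts of your argument are sound and essentially reproduce the paper's: evaluating $\s_1^{I}\rho_1\s_2^{J}\rho_1\s_1^{I}$ in two ways and then substituting $\s_2^{-1}=\s_1\s_2\s_1$ is exactly how the paper extracts $\s_2\s_1^{3-I}=\s_1^{I-3}\s_2$, and your normality bookkeeping with $\rho_0=\s_1\rho_1$, $\rho_2=\rho_1\s_2$ is fine. (For the commuting claim you are overcomplicating matters: it follows directly from the second relation alone, since $\s_2^{b-2}\s_1=\s_2^{b}(\s_2^{-2}\s_1)=\s_2^b\s_1^a\s_2^b=(\s_1\s_2^{-2})\s_2^b=\s_1\s_2^{b-2}$ by \pref{reverse-rel}; your detour through $\s_1^2\s_2^{-J-1}=\s_2^{J+1}\s_1^{-2}$ ends in a ``should give'' that is never cashed out and is not needed.)

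The genuine problem is the obstacle you yourself flagged in the first computation, and it does not go away. Your decomposition forces you to move $\s_2^{j+2}$ past $\s_1^{a}$, and since conjugation by $\s_1$ \emph{inverts} $\s_2^{j+2}$, you get $\s_2^{j+2}\s_1^a=\s_1^a\s_2^{(-1)^a(j+2)}$. Carrying your computation through yields $\s_2^{-1}\s_1^2=\s_1^{i-a}\rho_1\s_2^{\,b+(-1)^a(j+2)}$, which is the claimed identity only when $a$ is odd (otherwise you would additionally need $\s_2^{2(j+2)}=1$, which you have not established). The lemma carries no parity hypothesis on $a$, and it is invoked in the proof of \tref{second-rel} before anything is known about $a$; the fact that $a=1+p/2$ with $4\mid p$ only emerges later, in \lref{a-and-j-restrictions}. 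So your first step, as proposed, proves the identity only in the odd-$a$ case. The repair is to use the shorter decomposition $\s_2^{-1}\s_1^2=(\s_2^{-1}\s_1)\s_1=\s_1^i\rho_1\s_2^j\s_1$ and then $\s_2^{j}\s_1=\s_2^{-2}(\s_2^{j+2}\s_1)=\s_2^{-2}\s_1\s_2^{-(j+2)}=\s_1^a\s_2^{b-j-2}$, which conjugates $\s_2^{j+2}$ by a single $\s_1$ and therefore never sees the parity of $a$; this is the paper's computation.
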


\begin{proof}
By \pref{i-and-j-subgps} (b), conjugation by $\s_1$ inverts $\s_2^{j+2}$. Therefore,
\begin{align*}
\s_2^{-1} \s_1^2 &= \s_1^i \rho_1 \s_2^j \s_1 \\
&= \s_1^i \rho_1 \s_2^{-2} \s_2^{j+2} \s_1 \\
&= \s_1^i \rho_1 \s_2^{-2} \s_1 \s_2^{-(j+2)} \\
&= \s_1^i \rho_1 \s_1^a \s_2^{b-j-2} \\
&= \s_1^{i-a} \rho_1 \s_2^{b-j-2}.
\end{align*}
It follows from \pref{reverse-rel} that $\s_1^2 \s_2^{-1} = \s_2^{b-j-2} \rho_1 \s_1^{i-a}$. Therefore,
\begin{align*}
\s_2^{-1} \s_1^2 \rho_1 \s_1^{i-a} &= \s_1^{i-a} \rho_1 \s_2^{b-j-2} \rho_1 \s_1^{i-a} \\
&= \s_1^{i-a} \rho_1 \s_1^2 \s_2^{-1}.
\end{align*}
Then $\s_2^{-1} \s_1^{2-i+a} = \s_1^{i-a-2} \s_2$.
Therefore, by (\ref{eq:sinvolution}),
\begin{align*}
\s_2 \s_1^{3-i+a} &= \s_1^{-1} \s_2^{-1} \s_1^{2-i+a}
= \s_1^{i-a-3} \s_2.
\end{align*}
It follows that $\langle \s_1^{i-a-3} \rangle$ is normal. Finally, since $\s_2^{-2} \s_1 = \s_1^a \s_2^b$, then also
$\s_1 \s_2^{-2} = \s_2^b \s_1^a$ (by \pref{reverse-rel}).
Therefore,
\begin{align*}
\s_2^{b-2} \s_1 &= \s_2^b \s_1^a \s_2^b \\
&= \s_1 \s_2^{b-2},
\end{align*}
and so $\s_2^{b-2}$ is normalized by $\s_1$.
\end{proof}

\begin{theorem}
\label{thm:second-rel}
Let $\calP$ be a tight non-orientably regular polyhedron of type $\{p, q\}$. Then either $\G(\calP)$ is the quotient of $[p, q]$
by the relations $\s_2^{-1} \s_1 = \s_1^i \rho_1 \s_2^j$ and
$\s_2^{-2} \s_1 = \s_1^a \s_2^b$ (for some choice of $i$, $j$, $a$, and $b$), or $\G(\calP^{\delta})$ is the quotient of $[q, p]$ by those relations.
\end{theorem}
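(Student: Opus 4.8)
The plan is to imitate the proof of \tref{first-rels}: show that the two stated relations, adjoined to the string Coxeter presentation, already force the group to be tight, and then a flag count identifies the resulting group with $\G(\calP)$. By \lref{second-rel}, after possibly replacing $\calP$ by its dual $\calP^{\delta}$ --- which is again a tight non-orientably regular polyhedron, now of type $\{q,p\}$ --- we may assume that the relation $\s_2^{-2}\s_1 = \s_1^a\s_2^b$ holds in $\G(\calP)$ itself for some $a,b$; and \tref{first-rels} supplies $i,j$ with $\s_2^{-1}\s_1 = \s_1^i\rho_1\s_2^j$ in $\G(\calP)$. Let $\G$ be the group with presentation $[p,q]$ together with these two relations. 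Since $\G(\calP)$ is a string C-group of type $\{p,q\}$ in which both relations hold, sending each $\rho_i$ to itself defines a surjection $\G \twoheadrightarrow \G(\calP)$.

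The core of the argument is to prove that $\G$ is tight. By \lref{a-and-b-subgps}, the subgroups $N_1 = \langle \s_1^{i-a-3} \rangle$ and $N_2 = \langle \s_2^{b-2} \rangle$ are normal in $\G$, so by two applications of \pref{tight-quo-means-tight} it suffices to show that $\overline{\G} := \G/N_1N_2$ is tight. In $\overline{\G}$ the powers $\s_1^{i-a-3}$ and $\s_2^{b-2}$ are trivial, so the two defining relations, together with the derived relation $\s_2^{-1}\s_1^2 = \s_1^{i-a}\rho_1\s_2^{b-j-2}$ from \lref{a-and-b-subgps}, become concrete rewriting rules. Using these, \pref{i-and-j-subgps}(b) (which governs how $\s_1$ conjugates $\s_2^{j+2}$), and the identities $\rho_1\s_k^n\rho_1 = \s_k^{-n}$, one computes every word of the form $\s_2^{\pm 1}\s_1^n$ as some $\s_1^*\rho_1^*\s_2^*$, and then, inductively, moves every power of $\s_1$ to the left past every power of $\s_2$. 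By \pref{move-s1-s2} this shows that $\overline{\G}$ is tight, hence $\G$ is tight.

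Finally, the counting. In $\G$ the element $\s_1$ has order dividing $p$ and $\s_2$ has order dividing $q$, since $(\s_1)^p$ and $(\s_2)^q$ are already trivial in $[p,q]$; but their images in $\G(\calP)$ have orders exactly $p$ and $q$ (as $\calP$ has type $\{p,q\}$), so $\s_1$ and $\s_2$ have orders exactly $p$ and $q$ in $\G$. Hence $\G$ is a tight group of type $\{p,q\}$, so $|\G| = |\langle\s_1\rangle\langle\rho_1\rangle\langle\s_2\rangle| \le 2pq$. On the other hand $\calP$ is tight of type $\{p,q\}$, so $|\G(\calP)| = 2pq$; therefore the surjection $\G \twoheadrightarrow \G(\calP)$ is an isomorphism. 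This is exactly the assertion of the theorem --- and if we dualized at the outset, we instead obtain that $\G(\calP^{\delta})$ is the stated quotient of $[q,p]$.

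I expect the rewriting step in $\overline{\G}$ to be the main obstacle. Unlike in \tref{first-rels}, where the analogous quotient collapsed everything to the single clean relation $\s_2^{-1}\s_1 = \s_1^{-1}\s_2$, here the relations in $\overline{\G}$ still carry a $\rho_1$ and a priori unbounded powers of $\s_2$, so interleaving the two defining relations into a usable normal form requires a careful induction --- in particular one must track how powers of $\s_2$ conjugate $\s_1$ --- rather than a one-line computation.
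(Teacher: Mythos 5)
Your overall skeleton matches the paper's: reduce via \lref{second-rel} and \tref{first-rels} to the case where both relations hold in $\G(\calP)$ itself, show that the two-relator quotient $\G$ of $[p,q]$ is tight, and conclude by comparing orders with the tight group $\G(\calP)$. The gap is in the middle step. You assert that after killing $N_1 = \langle \s_1^{i-a-3}\rangle$ and $N_2 = \langle\s_2^{b-2}\rangle$ the relations ``become concrete rewriting rules'' from which one computes every word $\s_2^{\pm1}\s_1^n$ in normal form and then inducts --- but you never carry this out, and you yourself flag it as the main obstacle. This is not a routine verification: after your two reductions the parameters $i$ and $j$ are still unconstrained, each application of $\s_2^{-1}\s_1 = \s_1^i\rho_1\s_2^j$ injects a $\rho_1$ and a fresh power of $\s_2$ into the middle of the word, and the only conjugation control available from \pref{i-and-j-subgps}(b) governs $\s_1^{i-2}$ and $\s_2^{j+2}$ rather than $\s_1$ and $\s_2$ themselves, so there is no evident reason the rewriting terminates. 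As written, the tightness of $\overline{\G}$ --- and hence the whole theorem --- is asserted, not proved.

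The paper sidesteps exactly this difficulty by pushing the reduction much further before attempting any verification: it quotients by all four normal subgroups $\langle\s_1^{i-2}\rangle$, $\langle\s_2^{j+2}\rangle$, $\langle\s_1^{i-3-a}\rangle$, $\langle\s_2^{b-2}\rangle$ to land in $\D(p',q')_{(2,-2,-1,2)}$, then shows by short explicit computations that $\langle\s_2^3\rangle$ and $\langle\s_1^4\rangle$ are normal there, reducing finally to the single finite group $\D(4,3)_{(2,-2,-1,2)}$ (the hemicube group), whose tightness is checked with GAP; repeated application of \pref{tight-quo-means-tight} then lifts tightness back up the entire chain. To complete your argument you would either need to actually exhibit the normal-form computation in $\overline{\G}$ --- which I expect to be unworkable without the additional reductions --- or adopt the paper's strategy of descending to a concrete base case that can be checked directly.
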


\begin{proof}
Let us define $\Delta(p,q)_{(i,j,a,b)}$ to be the quotient of $[p, q]$ by the relations $\s_2^{-1} \s_1 = \s_1^i \rho_1 \s_2^j$
and $\s_2^{-2} \s_1 = \s_1^a \s_2^b$. \lref{second-rel} implies that either $\G(\calP)$ is a quotient of $\Delta(p,q)_{(i,j,a,b)}$ or that $\G(\calP^{\delta})$ is a quotient of some $\Delta(q,p)_{(i,j,a,b)}$,
for some choice of $(i,j,a,b)$.
Without loss of generality, let us assume that the first is true. It remains to show that $\G(\calP)$ is equal to $\Delta(p,q)_{(i,j,a,b)}$,
and not to a proper quotient. For that, it suffices to show that $\Delta(p,q)_{(i,j,a,b)}$ is itself tight.
In light of \pref{tight-quo-means-tight}, we may take the quotient by any normal subgroup generated by a power
of $\s_1$ or $\s_2$, and if that quotient is tight, then so is $\Delta(p,q)_{(i,j,a,b)}$. There are several such
normal subgroups; in particular, \pref{i-and-j-subgps} (b) shows that $\s_1^{i-2}$ and $\s_2^{j+2}$ generate
normal subgroups, and \lref{a-and-b-subgps} shows that $\s_1^{i-3-a}$ and $\s_2^{b-2}$ generate normal subgroups.
Taking the quotient by these subgroups yields the group $\Delta(p', q')_{(2,-2,-1,2)}$ for some $p'$
dividing $p$ and some $q'$ dividing $q$. 	
Now, in this quotient, $\s_2^{-2} \s_1 = \s_1^{-1} \s_2^2$, and therefore $\s_2^{-3} \s_1 = \s_2^{-1} \s_1^{-1} \s_2^2 = \s_1 \s_2^3$.
It follows that $\langle \s_2^3 \rangle$ is normal. Similarly, the relation $\s_2^{-1} \s_1 = \s_1^2 \rho_1 \s_2^{-2}$ holds, and so
\begin{align*}
\s_2^{-1} \s_1^4 &= \s_1^2 \rho_1 \s_2^{-2} \s_1^3 \\
&= \s_1^2 \rho_1 \s_1^{-1} \s_2^2 \s_1^2 \\
&= \s_1^2 \rho_1 \s_1^{-1} \s_2 \s_1^{-1} \s_2^{-1} \s_1 \\
&= \s_1^2 \rho_1 \s_1^{-1} \s_1^{-2} \rho_1 \s_2^2 \s_2^{-1} \s_1 \\
&= \s_1^5 \s_2 \s_1 \\
&= \s_1^4 \s_2^{-1},
\end{align*}
and thus $\langle \s_1^4 \rangle$ is normal as well. Taking the quotient by these subgroups yields $\D(4,3)_{(2,-2,-1,2)}$.
Using GAP \cite{gap}, we can verify that this group is tight; in fact, it is the group of the hemicube. It follows that $\D(p,q)_{(i,j,a,b)}$ is tight,
proving the claim.
\end{proof}

We see that every tight regular polyhedron has as its automorphism group one of the groups in
Theorem \ref{thm:first-rels} or \ref{thm:second-rel}. Furthermore, the given groups are always
tight, in the sense that $\G = \langle \s_1 \rangle \langle \rho_1 \rangle \langle \s_2 \rangle$.
Two things remain to be determined for each family of groups. First, for which values of the
parameters is the group a string C-group? Second, under what conditions is the group
actually of type $\{p, q\}$; in other words, when is there no collapse of the subgroups
$\langle \rho_0, \rho_1 \rangle$ and $\langle \rho_1, \rho_2 \rangle$? The answer to
these questions is quite dissimilar in the orientable case versus the non-orientable case,
and we require fairly different methods for the two cases. 

\section{Tight orientably regular polyhedra}\label{s:orientable}
We first consider the classification of tight, orientably regular polyhedra. Part of the classification was completed in \cite[Thm. 3.4]{tight2}:

	\begin{theorem}
	\label{thm:tight-polyhedra}
	There is a tight orientably regular polyhedron of type $\{p, q\}$ if and only if
	one of the following is true$\hskip 1pt :$
	\begin{enumerate}
	\item $p$ and $q$ are both even, or \\[-15 pt]
	\item $p$ is odd and $q$ is an even divisor of $2p$, or \\[-15 pt]
	\item $q$ is odd and $p$ is an even divisor of $2q$.
	\end{enumerate}
	\end{theorem}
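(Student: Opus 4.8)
\medskip
\noindent\textbf{Proof plan.}
The plan is to build on \tref{first-rels}, which has already done the main structural reduction: if $\calP$ is a tight orientably regular polyhedron of type $\{p,q\}$, then $\G(\calP)$ is the quotient $\G_{i,j}$ of $[p,q]$ by the single relation $\sigma_2^{-1}\sigma_1=\sigma_1^i\sigma_2^j$ for some $i,j$, and conversely every such quotient is tight. So the theorem reduces to deciding, for each $\{p,q\}$, whether some $(i,j)$ makes $\G_{i,j}$ a string C-group in which $\sigma_1$ has order exactly $p$ and $\sigma_2$ order exactly $q$ (equivalently, $|\G_{i,j}|=2pq$). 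Since tightness is automatic, the two things to control are non-collapse of these orders and the intersection condition; and since the dual of a tight orientably regular polyhedron of type $\{p,q\}$ is one of type $\{q,p\}$, the roles of $p$ and $q$ may be interchanged. I would prove the ``only if'' and ``if'' directions using one common analysis of the groups $\G_{i,j}$.

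The structural heart is as follows. By \pref{i-and-j-subgps}(a) the subgroups $\langle\sigma_1^{i+1}\rangle$ and $\langle\sigma_2^{j-1}\rangle$ are normal in $\G_{i,j}$; a direct check shows they are in fact central in the rotation subgroup $\G^+=\langle\sigma_1,\sigma_2\rangle$ (conjugation by $\sigma_1$ or by $\sigma_2$ amounts to two inversions), while $\rho_1$ inverts them. Put $g_1=\gcd(i+1,p)$, $g_2=\gcd(j-1,q)$, and $Z=\langle\sigma_1^{i+1},\sigma_2^{j-1}\rangle$. The intersection condition forces $\langle\sigma_1\rangle\cap Z=\langle\sigma_1^{g_1}\rangle$ and $\langle\sigma_2\rangle\cap Z=\langle\sigma_2^{g_2}\rangle$, so when $\sigma_1,\sigma_2$ do have their full orders, $Z$ is central of order $(p/g_1)(q/g_2)$ and $\G^+/Z$ has order $g_1g_2$; moreover modulo $Z$ the defining relation becomes $\sigma_2^{-1}\sigma_1=\sigma_1^{-1}\sigma_2$, so $\G^+/Z$ is the rotation group of the ``core'' polyhedron $D(g_1,g_2):=[g_1,g_2]/(\sigma_2^{-1}\sigma_1=\sigma_1^{-1}\sigma_2)$, with $\sigma_1,\sigma_2$ of full orders $g_1,g_2$. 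I would then analyse $D(g_1,g_2)$ by hand: from $\sigma_2^{-1}\sigma_1=\sigma_1^{-1}\sigma_2$ one gets $(\sigma_1\sigma_2^{-1})^2=1$, and combining this with $(\sigma_1\sigma_2)^2=1$ gives that each of $\sigma_1,\sigma_2$ inverts the square of the other, so $A:=\langle\sigma_1^2,\sigma_2^2\rangle$ is abelian and normal with $D(g_1,g_2)/A$ a quotient of $\Z_2\times\Z_2$; hence $|D(g_1,g_2)|$ is at most $4$ times the product of the orders of $\sigma_1^2$ and $\sigma_2^2$. Comparing this bound with the value $g_1g_2$ that is needed, and observing that if $g_1$ is odd then the relation forces $\sigma_2$ to invert $\sigma_1$, whence $(\sigma_1\sigma_2)^2=\sigma_2^2$ and so $\sigma_2^2=1$ and $g_2\le 2$, shows that $D(g_1,g_2)$ realises $\sigma_1,\sigma_2$ at their full orders precisely when $g_1$ and $g_2$ are both even, or one of them is at most $2$.

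With the admissible cores in hand, the ``only if'' direction is finished by feeding them back through the central extension $Z\to\G^+\to D(g_1,g_2)$: since $g_1\mid p$ and $g_2\mid q$, and since consistency of $(\sigma_1\sigma_2)^2=1$ and of $\sigma_1^p=\sigma_2^q=1$ in that extension forces an identification of certain powers of $\sigma_1$ and $\sigma_2$ inside $Z$, a short arithmetic check eliminates every $\{p,q\}$ not of the form (a)--(c); in the mixed-parity cases, where the only admissible cores are $D(g_1,1)$ or $D(g_1,2)$, matching a power of $\sigma_2$ with an element of $\langle\sigma_1\rangle$ is exactly what forces $q\mid 2p$. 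For the ``if'' direction I would, for each of (a)--(c), write down the appropriate $\G_{i,j}$: for $p,q$ both even one takes $\sigma_2^{-1}\sigma_1=\sigma_1^{-1}\sigma_2$, so that $Z$ is trivial and $\G^+=D(p,q)$; in the mixed-parity cases one takes a relation producing a nontrivial central $Z$ with $\G^+/Z$ a small dihedral or cyclic group. For each such group one checks $|\G_{i,j}|=2pq$ by exhibiting $Z$ and $D(g_1,g_2)$ of the correct sizes, and checks the intersection condition via the quotient criterion \pref{quo-crit}, mapping onto a small string C-group on which $\langle\rho_0,\rho_1\rangle$ or $\langle\rho_1,\rho_2\rangle$ is injective (alternatively, building the groups up from small ones with \pref{simple-quo} and \pref{tight-quo-means-tight}).

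The step I expect to be the main obstacle is the bookkeeping in the ``only if'' direction just sketched: one must rule out, for \emph{every} pair $(i,j)$, the existence of a tight orientably regular polyhedron of type $\{p,q\}$ unless $(p,q)$ satisfies (a), (b) or (c), and the delicate part is the mixed-parity analysis, where the interaction of the odd-order cyclic subgroup $\langle\sigma_1\rangle$ (or $\langle\sigma_2\rangle$) with the relation $(\sigma_1\sigma_2)^2=1$ is precisely what produces the divisibility constraints of (b) and (c) rather than leaving $p$ and $q$ free.
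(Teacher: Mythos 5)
This theorem is not actually proved in the paper: it is imported from \cite[Thm.\ 3.4]{tight2}, and the closest in-house analogue is the Section~\ref{s:orientable} machinery, which takes a quite different, more combinatorial route (reduction to polyhedra with no multiple edges via \pref{tight-mix} and Corollary~\ref{c:NoMultEd}, then an explicit analysis of the vertex labels and the congruence $(k/2)^2\equiv 1 \pmod{p/2}$ in Lemmas~\ref{l:p-k-restrictions}--\ref{l:theface} and Theorem~\ref{th:RegularTight}). Your route is therefore genuinely different: starting from \tref{first-rels}, you pass to the central subgroup $Z=\langle \sigma_1^{i+1},\sigma_2^{j-1}\rangle$ of $\G^+$ and analyse the quotient ``core'' $D(g_1,g_2)$. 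The skeleton is sound: centrality of $\sigma_1^{i+1}$ and $\sigma_2^{j-1}$ in $\G^+$ does follow from \pref{i-and-j-subgps}(a), the intersection condition gives $Z\cong\Z_{p/g_1}\times\Z_{q/g_2}$, and your parity analysis of $D(g_1,g_2)$ (each of $\sigma_1,\sigma_2$ inverts the square of the other; $g_1$ odd forces $\sigma_2^2=1$) is correct and immediately disposes of the case $p,q$ both odd, since then some $g_i=1$, making $\sigma_i$ central and collapsing everything to order at most $2$.

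The genuine gap is exactly where you predicted: the derivation of $p\mid 2q$ (for $q$ odd) from the central extension is asserted as ``a short arithmetic check,'' but the mechanism you name --- ``matching a power of $\sigma_2$ with an element of $\langle\sigma_1\rangle$ inside $Z$'' --- is not the one that works, and the naive iteration only gives $p\mid 4q$. Here is what actually closes it, so you can see the missing step concretely. With $q$ odd and $g_2\geq 3$, your core analysis forces $g_1\leq 2$, hence $\sigma_1^2$ is central in $\G^+$. Combining centrality of $\sigma_1^2$ with $\sigma_2\sigma_1\sigma_2=\sigma_1^{-1}$ yields $\sigma_1^{-1}\sigma_2^{-2}\sigma_1=\sigma_2^{2}\sigma_1^{4}$, and iterating (using that $\sigma_1^4$ is central) gives $\sigma_1^{-1}\sigma_2^{-2k}\sigma_1=\sigma_2^{2k}\sigma_1^{4k}$; taking $k=(q-1)/2$ gives $\sigma_2\sigma_1=\sigma_1\sigma_2^{-1}\sigma_1^{2(q-1)}$, whence
\begin{equation*}
(\sigma_1\sigma_2)^2=\sigma_1^{2}\,\sigma_2^{-1}\sigma_1^{2(q-1)}\sigma_2=\sigma_1^{2q},
\end{equation*}
so the relation $(\sigma_1\sigma_2)^2=1$ forces $p\mid 2q$ (and $p$ is even because the both-odd case is already excluded). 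Without some such computation the ``only if'' direction is a plan rather than a proof; likewise, in the ``if'' direction the witnessing groups for cases (b) and (c) are not exhibited, and one must actually write down the relation (the dual of the one above) and verify non-collapse and the intersection condition, e.g.\ via \pref{quo-crit}. So: a viable and attractively group-theoretic alternative to the published argument, but incomplete at its decisive step.
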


Furthermore, it was proved in \cite[Thm. 3.3]{tight2} that if $p$ or $q$ is odd, then there is at most one
isomorphism type of tight orientably regular polyhedra of type $\{p, q\}$. What remains to be determined
is how many tight orientably regular polyhedra there are when $p$ and $q$ are both even, and to
find presentations for their automorphism groups.

Let $\Lambda(p,q)_{i,j}$ be the quotient of $[p, q]$ by the extra relation $\s_2^{-1} \s_1 = \s_1^i \s_2^j$.
We determined in \tref{first-rels} that if $\calP$ is a tight orientably regular polyhedron of type $\{p, q\}$, then it
has automorphism group $\Lambda(p,q)_{i,j}$ for some choice of $i$ and $j$. For a given Schl\"afli symbol $\{p, q\}$,
we need to determine which values of $i$ and $j$ make $\Lambda(p,q)_{i,j}$ the automorphism group of a tight
orientably regular polyhedron of type $\{p, q\}$.

Let us recall that if $H$ is a subgroup of a group $G$, the largest subgroup of $H$ which is normal in $G$ is
called {\em the core} of $H$ in $G$, and we shall denote it $\Core_{G}(H)$.
If $\Core_G(H)$ is trivial, then we say that $H$ is \emph{core-free} in $G$.

We are able to reduce our classification problem using the following result.

\begin{proposition}
\label{prop:tight-mix}
$\Lambda(p,q)_{i,j}$ is the automorphism group of a tight orientably regular polyhedron
of type $\{p, q\}$ if and only if there are values $p'$ and $q'$ such that
\begin{enumerate}
\item $p'$ divides $p$ and $i+1$,
\item $q'$ divides $q$ and $j-1$,
\item the group $\Lambda(p,q')_{i,1}$ is the automorphism group of a tight orientably regular polyhedron
of type $\{p, q'\}$, with $\langle \s_2 \rangle$ core-free in $\Lambda(p,q')_{i,1}$, and
\item the group $\Lambda(p',q)_{-1,j}$ is the automorphism group of a tight orientably regular polyhedron
of type $\{p', q\}$, with $\langle \s_1 \rangle$ core-free in $\Lambda(p',q)_{-1,j}$.
\end{enumerate}
\end{proposition}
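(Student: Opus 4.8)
The plan is to prove the two implications separately, the key algebraic input being the normal subgroups of \pref{i-and-j-subgps}(a) together with the string-C-group criteria \pref{simple-quo} (for quotients) and \pref{quo-crit} (for the group itself). Throughout write $\G := \Lambda(p,q)_{i,j}$, $p' := \gcd(p,i+1)$, $q' := \gcd(q,j-1)$; conditions (a) and (b) will then hold for this choice of $p',q'$, and everything reduces to relating $\G$ to the groups $\Lambda(p,q')_{i,1}$ and $\Lambda(p',q)_{-1,j}$.

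For the forward implication, assume $\G$ is the automorphism group of a tight orientably regular polyhedron of type $\{p,q\}$. By \pref{i-and-j-subgps}(a) the subgroups $M_1 := \langle\s_1^{i+1}\rangle$ and $M_2 := \langle\s_2^{j-1}\rangle$ are normal in $\G$, and since $\s_1,\s_2$ have orders $p,q$ these are the same as $\langle\s_1^{p'}\rangle$ and $\langle\s_2^{q'}\rangle$. Rewriting the defining relation $\s_2^{-1}\s_1 = \s_1^i\s_2^j$ modulo $\s_2^{q'}=1$ turns $\s_2^j$ into $\s_2$ (using $q'\mid j-1$), so that $\G/M_2 = \Lambda(p,q')_{i,1}$; because $\G$ is a string C-group and orientably regular we have $\langle\s_1\rangle\cap\langle\s_2\rangle=\{1\}$ (the intersection condition puts this intersection into $\langle\rho_1\rangle$, which meets the rotation subgroup trivially), so the quotient map is injective on $\langle\s_1\rangle$ and $\s_1$ keeps order $p$ while $\s_2$ acquires order $q'$; hence $\G/M_2$ has type $\{p,q'\}$. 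It is a string C-group by \pref{simple-quo}, tight because a quotient of a tight group is tight, and orientably regular since its defining relations are all even; this gives (c). The symmetric argument with $M_1$ (replacing $\s_1^i$ by $\s_1^{-1}$, using $p'\mid i+1$) gives (d).

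For the converse, suppose $p',q'$ are given satisfying (a)--(d). By the proof of \tref{first-rels}, $\G$ is tight and all its defining relations are even. Conditions (a) and (b) are exactly what is needed to verify that the relation $\s_2^{-1}\s_1 = \s_1^i\s_2^j$ holds in $\Lambda(p',q)_{-1,j}$ (there $\s_1^{p'}=1$ and $p'\mid i+1$, so $\s_1^i=\s_1^{-1}$) and in $\Lambda(p,q')_{i,1}$ (there $\s_2^{q'}=1$ and $q'\mid j-1$, so $\s_2^j=\s_2$); hence $\G$ surjects onto each of these. Since $\s_1$ has order $p$ in $\Lambda(p,q')_{i,1}$ and $\s_2$ has order $q$ in $\Lambda(p',q)_{-1,j}$ (by (c) and (d)), the same holds in $\G$. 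Finally I would apply \pref{quo-crit} to the surjection $\G\to\Lambda(p,q')_{i,1}$, which is a string C-group by (c): the subgroup $\langle\rho_0,\rho_1\rangle$ of $\G$ has order at most $2p$ and maps onto the order-$2p$ subgroup $\langle\rho_0,\rho_1\rangle$ of $\Lambda(p,q')_{i,1}$, so the restriction is injective, and \pref{quo-crit} gives that $\G$ is a string C-group. It is therefore the automorphism group of a regular polyhedron, necessarily of type $\{p,q\}$ by the orders of $\s_1$ and $\s_2$, orientably regular because all relations are even, and tight because a tight group of type $\{p,q\}$ has order at most $2pq$ and hence exactly $2pq$.

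The step I expect to be the main obstacle is the forward direction: one must ensure that passing to $\G/M_1$ and $\G/M_2$ does not collapse $\s_2$ or $\s_1$ (handled by $\langle\s_1\rangle\cap\langle\s_2\rangle=\{1\}$) and that these quotients are still string C-groups, for which \pref{simple-quo} needs the exponent $p'$ or $q'$ to be at least $2$; the few boundary cases with $p'=1$ or $q'=1$, where the target ``polyhedron'' of type $\{p',q\}$ or $\{p,q'\}$ is degenerate, must be checked separately under whatever convention the paper uses for such degenerate polyhedra.
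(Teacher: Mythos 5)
Your proof is correct and takes essentially the same route as the paper: the forward direction passes to the quotients by the normal subgroups $\langle\s_1^{i+1}\rangle$ and $\langle\s_2^{j-1}\rangle$ from \pref{i-and-j-subgps}(a) with $p'=\gcd(p,i+1)$ and $q'=\gcd(q,j-1)$, invoking \pref{simple-quo}, and the converse uses the two covers to pin down the orders of $\s_1$ and $\s_2$ and then \pref{quo-crit} (you apply it to the cover onto $\Lambda(p,q')_{i,1}$ via the face stabilizer, the paper to the other cover; both work). The boundary case $p'=1$ or $q'=1$ that you flag cannot actually occur: $q'=1$ would make $\langle\s_2\rangle$ normal with $j-1$ invertible modulo $q$, so $\rho_0$ would invert $\s_2$, hence $\s_1$ would centralize $\s_2$, and then $(\s_1\s_2)^2=1$ forces $p,q\le 2$, contradicting $q'=1<q$.
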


\begin{proof}
First, suppose that $\Lambda(p,q)_{i,j}$ is the automorphism group of a tight orientably regular polyhedron
of type $\{p, q\}$. Let $N := \langle \s_1^{p'} \rangle$ be the core of $\langle \s_1 \rangle$ in $\Lambda(p, q)_{i,j}$,
with $p'$ dividing $p$. \pref{i-and-j-subgps} (a) shows that $\langle \s_1^{i+1} \rangle$ is normal,
and it follows that $p'$ divides $i+1$ as well. Then the quotient
of $\Lambda(p,q)_{i,j}$ by $N$ is $\Lambda(p',q)_{-1,j}$, and the latter has $\langle \s_1 \rangle$ core-free. 
Since $\s_2$ has order $q$ in $\Lambda(p,q)_{i,j}$ (by supposition), the same is true in $\Lambda(p',q)_{-1,j}$;
otherwise, we would have some $\s_2^{q'} \in \langle \s_1^{p'} \rangle$, violating the intersection condition.
Then \pref{simple-quo} implies that $\Lambda(p',q)_{-1,j}$
is a string C-group. Therefore, $\Lambda(p',q)_{-1,j}$ is the automorphism group of a tight orientably regular
polyhedron of type $\{p', q\}$. An analogous argument, taking the quotient by the core of $\langle \s_2 \rangle$ 
(which we set to be $\langle \s_2^{q'} \rangle$), shows that $\Lambda(p, q')_{i,1}$ is the automorphism group of a 
tight orientably regular polyhedron of type $\{p, q'\}$, where $q'$ divides $q$ and $j-1$.

In the other direction, suppose that $\Lambda(p,q')_{i,1}$ and $\Lambda(p',q)_{-1,j}$ are automorphism groups
of tight orientably regular polyhedra of types $\{p, q'\}$ and $\{p', q\}$, respectively, and suppose that
$p'$ divides $p$ and $i+1$, and that $q'$ divides $q$ and $j-1$. It is clear from the presentations that
$\Lambda(p,q)_{i,j}$ covers $\Lambda(p',q)_{i,j}$ and $\Lambda(p,q')_{i,j}$. Since $p'$ divides
$i+1$, it follows that $i \equiv -1$ (mod $p'$), and thus $\Lambda(p', q)_{i,j} = \Lambda(p', q)_{-1,j}$.
Similarly, $\Lambda(p,q')_{i,j} = \Lambda(p,q')_{i,1}$. So $\Lambda(p,q)_{i,j}$ covers
$\Lambda(p,q')_{i,1}$ and $\Lambda(p',q)_{-1,j}$. Since $\Lambda(p,q')_{i,1}$ has type $\{p, q'\}$,
it follows that $\s_1$ has order $p$ in $\Lambda(p,q)_{i,j}$, and since $\Lambda(p',q)_{-1,j}$ has type
$\{p', q\}$, it follows that $\s_2$ has order $q$ in $\Lambda(p,q)_{i,j}$. Finally,
since the cover from $\Lambda(p,q)_{i,j}$ to $\Lambda(p',q)_{-1,j}$ is one-to-one
on the facets and the latter is a string C-group, \pref{quo-crit} implies that $\Lambda(p,q)_{i,j}$
is also a string C-group. So $\Lambda(p,q)_{i,j}$ is the automorphism group of an orientably
regular polyhedron $\calP$ of type $\{p, q\}$. Since the group $\Lambda(p,q)_{i,j}$ is
tight (by \tref{first-rels}), it follows that $\calP$ is tight.
\end{proof}

There is a nice combinatorial interpretation of what it means for $\langle \s_2 \rangle$
to be core-free in $\G(\calP)$. We start by remarking that $\langle \s_2 \rangle$ is 
core-free in $\G(\calP)$ if and only if it is core-free in $\G^+(\calP)$, since
$\rho_1$ and $\rho_2$ normalize any subgroup $\langle \s_2^k \rangle$,
and so if $\s_1 (= \rho_0 \rho_1)$ normalizes such a subgroup, then so does $\rho_0$.
Therefore, we can work with $\G^+(\calP)$ instead. We start with a few simple results.

\begin{proposition}
\label{prop:fix-all-nbrs}
Let $\calP$ be a tight polyhedron.
Suppose $\varphi \in \G(\calP)$ acts as a rotation at the vertex $v$; that is, $\varphi$ fixes $v$ while cyclically permuting
the neighbors of $v$. If $\varphi$ fixes some neighbor of $v$, then it fixes all neighbors of $v$.
\end{proposition}

\begin{proof}
Suppose that $\varphi$ fixes $u$, and let
$w$ be another neighbor of $v$. Then there is some automorphism $\psi$ that acts as a rotation
at $v$ and with the property that $u \psi = w$. Furthermore, since $\varphi$ and $\psi$ both
act as rotations at $v$, it follows that $u \varphi \psi = u \psi \varphi$. Therefore,
\[ w \varphi = u \psi \varphi = u \varphi \psi = u \psi = w, \]
and so $\varphi$ fixes every neighbor of $v$.
\end{proof}

\begin{corollary}
\label{cor:fix-all-verts}
Let $\calP$ be a tight orientably regular polyhedron. Let $\varphi \in \G^+(\calP)$,
and suppose that $\varphi$ fixes some vertex $v$ and one of the neighbors of $v$.
Then $\varphi$ fixes all vertices of $\calP$.
\end{corollary}

\begin{proof}
Let $\varphi \in \G^+(\calP)$ and suppose that $\varphi$ fixes some vertex $v$ and one of its neighbors.
Then by \pref{fix-all-nbrs}, $\varphi$ fixes all of the neighbors
of $v$. The stabilizer of each vertex is a dihedral group, and since $\calP$ is orientable and
$\varphi \in \G^+(\calP)$, it follows that $\varphi$ acts at a rotation at each of those neighbors. 
Therefore, $\varphi$ fixes the neighbors
of each neighbor of $v$. Continuing in this manner and using the connectivity of $\calP$,
we see that $\varphi$ fixes every vertex.
\end{proof}

\begin{proposition}
\label{prop:s2-core}
Let $\calP$ be a tight orientably regular polyhedron of type $\{p, q\}$, with $\G^+(\calP) = \langle \s_1, \s_2 \rangle$.
Let $v$ be the base vertex of $\calP$, and let $q'$ be the smallest positive integer such that $\s_2^{q'}$
fixes one of the neighbors of $v$. Then $\langle \s_2^{q'} \rangle$ is the subgroup of $\G^+(\calP)$ that fixes every vertex 
of $\calP$, and $\langle \sigma_2^{q'} \rangle = \Core_{\G^+({\cal P})}(\langle \sigma_2 \rangle)$.
\end{proposition}

\begin{proof}
By \cref{fix-all-verts}, $\s_2^{q'}$ must fix every vertex of $\calP$, from which it is immediate that
$\langle \sigma_2^{q'} \rangle \lhd \G^+(\calP)$. Furthermore, if $\varphi \in \G^+(\calP)$ fixes every
vertex of $\calP$, then it must lie in $\langle \s_2 \rangle$, and by our choice of $q'$ it follows that
$\langle \s_2^{q'} \rangle$ is the subgroup of $\G^+(\calP)$ that fixes every vertex.

Next we want to show that $\langle \s_2^{q'} \rangle$ is the largest subgroup of $\langle \s_2 \rangle$
that is normal in $\G^+(\calP)$. Suppose that
$\sigma_2^a \notin \langle \sigma_2^{q'} \rangle$. Then there is a vertex $u$ such that $u\sigma_2^a \ne u$.
Since $\calP$ is tight, the base face is incident on every vertex, and thus we can find some
$b \in \mathbb{Z}$ such that $v\sigma_1^b = u$. Then $v \sigma_1^b\sigma_2^a \sigma_1^{-b} =
u\sigma_2^a\sigma_1^{-b} \ne v$. Therefore, $\s_1^b \s_2^a \s_1^{-b} \notin \langle \s_2 \rangle$ (since $\s_2$ fixes $v$),
and so $\langle \sigma_2^a \rangle$ is not normal in $\G^+({\cal P)}$.
\end{proof}

We are now ready to explain the connection between having multiple edges and the
core of $\langle \s_2 \rangle$. 

\begin{proposition}
\label{prop:mult-edge-core}
Let $\calP$ be a tight orientably regular polyhedron. Then $\calP$ has no multiple edges if and
only if $\langle \s_2 \rangle$ is core-free in $\G(\calP)$.
\end{proposition}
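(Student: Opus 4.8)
The plan is to derive both implications directly from the analysis carried out in the paragraphs preceding the statement, so the proof is essentially a matter of assembling those pieces. First I would isolate the structural fact already established: if $\langle \s_2^{q'} \rangle$ denotes the core of $\langle \s_2 \rangle$ in $\G^+(\calP)$ --- which is the same as its core in $\G(\calP)$, since $\rho_1$ and $\rho_2$ normalize every power of $\s_2$ --- then $\langle \s_2^{q'} \rangle$ is precisely the subgroup of $\G^+(\calP)$ fixing every vertex of $\calP$. This is exactly what the connectivity/propagation argument above gives: an automorphism fixing the base vertex together with one incident edge fixes all vertices, and conversely any vertex-fixing automorphism lies in $\langle \s_2 \rangle$ because $\langle \s_2 \rangle$ is the stabilizer of the base vertex.

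For the forward direction, I would assume $\calP$ has no multiple edges and invoke \pref{FaithfulAction}: $\G^+(\calP)$ then acts faithfully on the vertices, so the pointwise vertex-stabilizer is trivial, and by the identification above $Core_{\G^+(\calP)}(\langle \s_2 \rangle)$ is trivial, i.e.\ $\langle \s_2 \rangle$ is core-free in $\G(\calP)$.

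For the converse, I would assume $\calP$ has multiple edges and produce a witness. Pick adjacent vertices $u$ and $v$ joined by two distinct edges $e_1 \ne e_2$; by regularity there is an automorphism $\varphi$ sending the incident pair $(v, e_1)$ to $(v, e_2)$. Since $\varphi$ fixes $v$ it is a power of $\s_2$, and since it carries one of the two $u$--$v$ edges to the other while fixing $v$, it must also fix $u$. The propagation argument then forces $\varphi$ to fix every vertex, so $\varphi \in \langle \s_2^{q'} \rangle$; as $\varphi \ne 1$ (it moves $e_1$), the core $\langle \s_2^{q'} \rangle$ is nontrivial.

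The main point to be careful about --- and the only place where the argument is doing real work rather than bookkeeping --- is the claim that the pointwise vertex-stabilizer equals, and is not merely contained in, the core of $\langle \s_2 \rangle$: the ``$\supseteq$'' inclusion relies on the observation (made above) that any power $\s_2^a \notin \langle \s_2^{q'} \rangle$ moves some vertex $u = v\s_1^b$, whence $\s_1^b \s_2^a \s_1^{-b}$ moves $v$ and so escapes $\langle \s_2 \rangle$, showing $\langle \s_2^a \rangle$ is not normal in $\G^+(\calP)$. I do not expect the degenerate type $\{p,2\}$ to need separate treatment, since \pref{FaithfulAction} is phrased for the rotation subgroup and remains valid there.
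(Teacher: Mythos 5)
Your proposal is correct and follows essentially the same route as the paper's own argument: you identify $Core_{\G^+(\calP)}(\langle \s_2 \rangle)$ with the pointwise vertex stabilizer $\langle \s_2^{q'} \rangle$ via the propagation argument, then use \pref{FaithfulAction} for one implication and the multiple-edge witness $\varphi$ (a nontrivial power of $\s_2$ fixing every vertex) for the other. No gaps.
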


\begin{proof}
First, suppose that $\langle \s_2 \rangle$ has a nontrivial core. Then by \pref{s2-core}, there
is an automorphism $\s_2^{q'}$ that fixes all vertices, and by \pref{FaithfulAction},
it follows that $\calP$ has multiple edges.

Conversely, suppose that $\calP$ has multiple edges, and let $v$ be the base vertex. 
Let $u$ be a neighbor of $v$, and let $e_1$ and $e_2$ be edges between $u$ and $v$.
By the regularity of $\calP$, there is some nontrivial even automorphism $\varphi$ that sends the pair 
$(v, e_1)$ to $(v, e_2)$, and since $\varphi$ fixes $v$, it follows that $\varphi = \s_2^{q'}$ for some $q'$.
In order for $\s_2^{q'}$ to send $e_1$ to $e_2$, it must be the case that $\varphi$ fixes $u$. 
Then by \cref{fix-all-verts}, $\s_2^{q'}$ fixes every vertex, which implies that $\langle \s_2^{q'} \rangle$
is normal in $\langle \s_2 \rangle$. Thus $\langle \s_2 \rangle$ has a nontrivial core.
\end{proof}

In light of \pref{mult-edge-core}, we may reinterpret \pref{tight-mix} as follows:

\begin{corollary}\label{c:NoMultEd}
$\Lambda(p,q)_{i,j}$ is the automorphism group of a tight orientably regular polyhedron
of type $\{p, q\}$ if and only if there are values $p'$ and $q'$ such that
\begin{enumerate}
\item $p'$ divides $p$ and $i+1$,
\item $q'$ divides $q$ and $j-1$,
\item $\Lambda(p,q')_{i,1}$ is the automorphism group of a tight orientably regular polyhedron
of type $\{p, q'\}$, and with no multiple edges, and
\item $\Lambda(p',q)_{-1,j}$ is the automorphism group of a tight orientably regular polyhedron
of type $\{p', q\}$, such that the dual has no multiple edges.
\end{enumerate}
\end{corollary}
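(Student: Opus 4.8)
The plan is to deduce \cref{NoMultEd} from \pref{tight-mix} by substituting the combinatorial characterization of core-freeness that was just established. Recall that \pref{tight-mix} already gives the stated equivalence with conditions (a) and (b) unchanged, but with (c) and (d) reading only ``$\Lambda(p,q')_{i,1}$ is the automorphism group of a tight orientably regular polyhedron of type $\{p,q'\}$'' and ``$\Lambda(p',q)_{-1,j}$ is the automorphism group of a tight orientably regular polyhedron of type $\{p',q\}$''. So it suffices to show that, under hypotheses (a) and (b), the condition in \pref{tight-mix}(c) is equivalent to the condition in \cref{NoMultEd}(c), and likewise for (d).

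First I would handle (c). Suppose $p'$ divides $i+1$ and $q'$ divides $j-1$, and that $\calP'$ is a tight orientably regular polyhedron with $\G(\calP') = \Lambda(p,q')_{i,1}$ of type $\{p,q'\}$. Since $q'$ divides $j-1$ we have $j \equiv 1 \pmod{q'}$, so $\Lambda(p,q')_{i,j} = \Lambda(p,q')_{i,1}$; this is the reason the reduction is consistent, but it is not needed for the present step. The point is simply that in $\Lambda(p,q')_{i,1}$ the relation $\s_2^{-1}\s_1 = \s_1^i \s_2^1$ holds, so in the notation of \pref{i-and-j-subgps}(a) the exponent $j$ equals $1$, whence $\langle \s_2^{j-1} \rangle = \langle \s_2^0 \rangle$ is trivial. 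By the analysis preceding the displayed Proposition on multiple edges, $\langle \s_2^{q''} \rangle$ — the core of $\langle\s_2\rangle$ in $\G^+(\calP')$, equivalently in $\G(\calP')$ — is the subgroup fixing every vertex, and I claim it is trivial here. Indeed, from $\s_2^{-1}\s_1 = \s_1^i\s_2$ one gets (as in the proof of \tref{first-rels}) that $\s_1\s_2^{-1} = \s_2\s_1^i$, and combining these shows $\s_2$ is inverted by conjugation by $\rho_0$ as well as by $\rho_1$ and $\rho_2$, so $\langle\s_2\rangle \lhd \G(\calP')$ (this is the $j=1$ specialization of \pref{i-and-j-subgps}(a)). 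Hmm — that would make $\langle\s_2\rangle$ its own core, which is generally nontrivial. So the cleaner route is to invoke directly the just-proved Proposition: $\calP'$ has no multiple edges if and only if $\langle\s_2\rangle$ is core-free in $\G(\calP')$. Thus I simply append ``and with no multiple edges'' to (c) exactly when I append ``$\langle\s_2\rangle$ core-free'', and these are the same condition by that Proposition. But wait — \pref{tight-mix}(c) does \emph{not} demand core-freeness; it only demands that $\Lambda(p,q')_{i,1}$ be such a polyhedron. So I must check the two conditions really do coincide, i.e. that every tight orientably regular polyhedron of the form $\Lambda(p,q')_{i,1}$ automatically has no multiple edges.

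That automaticity is the crux, and it is the step I expect to be the main obstacle. The key observation is: if $\G = \Lambda(p,q')_{i,1}$ is a string C-group of type $\{p,q'\}$, then $j=1$ forces $\langle\s_2^{j-1}\rangle = \{1\}$, and I claim this forces $\langle\s_2\rangle$ to be core-free. The subtlety is that \pref{i-and-j-subgps}(a) only gives normality of $\langle\s_2^{j-1}\rangle$, not that this is the full core; the full-core description comes from the vertex-action analysis, which identifies $Core_{\G^+(\calP')}(\langle\s_2\rangle) = \langle\s_2^{q''}\rangle$ where $\s_2^{q''}$ generates the pointwise vertex stabilizer. So I need: $j=1$ implies $q'' = q'$, i.e. the pointwise vertex stabilizer is trivial. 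For this I would argue that in $\Lambda(p,q')_{i,1}$ the subgroup $\langle\s_2\rangle$ acts faithfully on the neighbors of the base vertex — equivalently that $\s_2$, which cyclically permutes those $q'$ neighbors, has that permutation of order $q'$ — because $\langle\s_1,\rho_1\rangle$ together with the flat structure realizes all $q'$ flags at the base vertex distinctly. More concretely: the neighbors of $v$ are $v, v\s_1, v\s_1^2, \ldots$ — no wait, they are obtained by the action of $\langle\s_2\rangle$ on one neighbor; since the polyhedron is flat every face meets $v$, and the edge incident to $v$ in the base face is permuted to $q'$ distinct edges by $\s_2$ (that is exactly what type $\{p,q'\}$ means at the vertex-figure level), so $\s_2$ fixes none of them until $\s_2^{q'}$, giving $q'' = q'$ and hence $\langle\s_2\rangle$ core-free, hence no multiple edges. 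The dual argument handles (d), reading ``such that the dual has no multiple edges'' in place of ``$\langle\s_1\rangle$ core-free in $\G(\calP'')$'', since passing to the dual swaps the roles of $\s_1$ and $\s_2$ and of $\langle\rho_0,\rho_1\rangle$ and $\langle\rho_1,\rho_2\rangle$, and $\langle\s_1\rangle$ core-free in $\G(\calP'')$ means $\langle\s_2\rangle$ core-free in $\G(\calP''^{\delta})$, i.e. $\calP''^{\delta}$ has no multiple edges. With these two equivalences in hand, \cref{NoMultEd} follows by substituting them into \pref{tight-mix}, and the only real work is the automaticity claim: a tight orientably regular $\Lambda(p,q')_{i,1}$ of type $\{p,q'\}$ never has multiple edges, which I would prove via the vertex-figure / flatness argument sketched above, possibly citing \pref{FaithfulAction} after reducing to the no-multiple-edges side.
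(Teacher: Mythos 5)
Your reduction founders on the ``automaticity claim'' that you yourself flag as the crux: it is simply false that a tight orientably regular polyhedron with group $\Lambda(p,q')_{i,1}$ never has multiple edges. The paper exhibits the counterexample just before this corollary: $\Lambda(4,4)_{-1,1}$ is the group of the toroidal map $\{4,4\}_{(2,0)}$, in which $\langle \s_2 \rangle$ is \emph{not} core-free (its core is $\langle \s_2^2\rangle$), so by the preceding proposition that polyhedron has multiple edges even though $j=1$. The flaw in your vertex-figure argument is the conflation of edges at $v$ with neighbors of $v$: the fact that the vertex-figure is a $q'$-gon guarantees that $\s_2$ permutes the $q'$ \emph{edges} at $v$ in a single cycle, but two of those edges may share their other endpoint --- that is exactly what having multiple edges means --- so $\s_2^{q''}$ can fix the neighbor $u$ as a vertex for some proper divisor $q''$ of $q'$ while still moving the edges between $u$ and $v$. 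Hence conditions (c) and (d) of the corollary are genuinely stronger, for a fixed pair $(p',q')$, than conditions (c) and (d) of \pref{tight-mix}, and no pointwise equivalence of the two sets of conditions exists.

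The statement is nevertheless true because it is existential in $p'$ and $q'$, and the correct necessity argument (the one the paper uses) exploits this: starting from the $p'$, $q'$ produced by \pref{tight-mix}, if $\Lambda(p,q')_{i,1}$ has multiple edges one quotients further by $Core(\langle\s_2\rangle)$, obtaining a tight orientably regular polyhedron of type $\{p,q''\}$ with no multiple edges for some $q''$ dividing $q'$ (its group is still of the form $\Lambda(p,q'')_{i,1}$, and it is a string C-group by \pref{simple-quo}); dually one replaces $p'$ by a divisor $p''$. Since $q''$ still divides $q$ and $j-1$, and $p''$ still divides $p$ and $i+1$, the new pair satisfies all four conditions. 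Your proposal is missing precisely this step of shrinking $q'$ and $p'$; without it the necessity direction does not go through. (Sufficiency, as you note, is immediate from \pref{tight-mix}, and your dualization of (c) to (d) is fine.)
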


Our first step will then be to find all tight orientably-regular polyhedra of type $\{p, q\}$ with no multiple edges.

\subsection{Tight orientably regular polyhedra with no multiple edges}

	There is exactly one orientably regular polyhedra $\calP$ with Schl\"afli type $\{p,2\}$ for every $p \ge 3$. This can be easily seen either by showing that the graph induced by the vertex and edge set must be connected and $2$-regular, or by noting that the Coxeter group $[p,2]$ is isomorphic to $D_p \times C_2$ and must cover $\Gamma(\calP)$. None of these polyhedra has multiple edges.
	
	In what follows we shall
	determine the remaining tight orientably regular polyhedra with no multiple edges.
	
	In the results that follow, we will generally assume the following, which we call the \emph{usual setup} (see Figure \ref{fig:UsualSetup}).
	Let $\cal Q$ be a tight orientably regular polyhedron of type $\{p, q\}$, with $q \ge 3$, and suppose that
	$\calQ$ has no multiple edges.
	Let us fix a base face $F_1$ and label the vertices with elements of $\Z_p$ in such a way that
	$i \sigma_1 = i + 1$. The flag $\Phi$ will consist of the vertex $1$, the edge between $0$ and $1$, and the
	face $F_1$. Let $F_2$ be the other face containing the edge between $0$ and $1$, and let $k$ be the other
	vertex of $F_2$ that is adjacent to $1$ (so that $0 \sigma_2 = k$).
	
\begin{figure}
\begin{center}
\includegraphics[width=5cm, height=3.1cm]{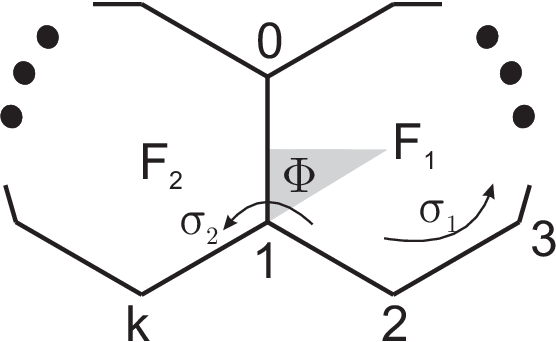}
\end{center}
\caption{The \emph{usual setup} for tight orientably regular polyhedra with no multiple edges \label{fig:UsualSetup}}
\end{figure}
	
	\begin{lemma}\label{l:p-k-restrictions}
	Let $\cal Q$ be a tight orientably regular polyhedron of type $\{p, q\}$ with $q \geq 3$, no multiple edges, and with the 
	usual setup. Then
	\begin{itemize}
	\item[(a)] $p > q$.
	\item[(b)] $p$ is even.
	\item[(c)] Every vertex of the dual of $\mathcal{Q}$ has exactly two neighbors.
	\item[(d)] $k$ is even.
	\item[(e)] The vertices of $F_2$, in clockwise order, are $(1, 0, -k+1, -k, -2k+1, -2k, \ldots, k+1, k)$.
	\end{itemize}
	\end{lemma}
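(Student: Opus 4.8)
The plan is to work throughout with the action of $\G(\calQ)$ on the vertex set. This action is available because $\calQ$, being tight, is flat, so every vertex lies on the base face $F_1$; and because $\calQ$ has no multiple edges and $q\ge 3$, its full automorphism group acts faithfully on vertices, by the observation following \pref{FaithfulAction}. Since $\rho_0$ and $\rho_1$ fix $F_1$ setwise and the labelling is chosen so that $i\sigma_1=i+1$ (so the cyclic vertex list of $F_1$ is $(0,1,\dots,p-1)$), on vertices $\rho_0$ acts as $t\mapsto 1-t$ and $\rho_1$ as $t\mapsto 2-t$. I also put the defining relation into a normalised form: because $\calQ$ has no multiple edges, $\langle\sigma_2\rangle$ is core-free, so the normal subgroup $\langle\sigma_2^{j-1}\rangle$ supplied by \pref{i-and-j-subgps}(a) is trivial and \tref{first-rels} applies with ``$j=1$'', i.e.\ $\sigma_2^{-1}\sigma_1=\sigma_1^{i}\sigma_2$. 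Computing $0\sigma_2=1\sigma_1^{-1}\sigma_2=1\sigma_2^{-1}\sigma_1^{-i}=1-i$ shows $k=1-i$, so the relation reads $\sigma_2^{-1}\sigma_1=\sigma_1^{1-k}\sigma_2$.

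The heart of the proof is to determine the permutation $\sigma_2$ of $\Z_p$ explicitly. From \eref{sinvolution}, in the form $\sigma_1\sigma_2=\sigma_2^{-1}\sigma_1^{-1}$, and from $\sigma_1\sigma_2^{-1}=\sigma_2\sigma_1^{1-k}$ (which is \pref{reverse-rel} applied to the defining relation), one gets $\sigma_1^{2}\sigma_2=\sigma_1\sigma_2^{-1}\sigma_1^{-1}=\sigma_2\sigma_1^{-k}$, and hence, by induction on $m$, $\sigma_1^{2m}\sigma_2=\sigma_2\sigma_1^{-mk}$ for every integer $m$. Evaluating both sides at the vertices $1$ and $0$ and using $1\sigma_2=1$, $0\sigma_2=k$ gives, for all $m$,
\[ (2m+1)\sigma_2 = 1-mk, \qquad (2m)\sigma_2 = (1-m)k . \]

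The five assertions now drop out. For (a): vertex $1$ has $q$ distinct neighbours (no multiple edges), none of them $1$, so $q\le p-1<p$. For (b): if $p$ were odd then $2$ is invertible mod $p$, and writing one vertex both as $2m$ and as $2m'+1$ and comparing the two formulas forces $k\equiv 2$; then $v\sigma_2=2-v=v\rho_1$ for every $v$, so $\sigma_2=\rho_1$ by faithfulness, hence $\rho_2=1$ and $q=2$, against $q\ge 3$; thus $p$ is even. For (c): the faces meeting $F_1$ along an edge are exactly the $F_2\sigma_1^{t}$, $t\in\Z_p$ (since $\sigma_1^t$ fixes $F_1$ and sends the base edge to the edge $\{t,t+1\}$ of $F_1$); from $\sigma_2\sigma_1=\sigma_1^{-1}\sigma_2^{-1}$ (again \eref{sinvolution}) one gets $F_2\sigma_1=F_1\sigma_2^{-1}$, whence $F_2\sigma_1^{2}=F_1\sigma_2^{-1}\sigma_1=F_1\sigma_1^{1-k}\sigma_2=F_2$ by the defining relation, while $F_2\sigma_1\notin\{F_1,F_2\}$ because $\sigma_2\notin\langle\sigma_1\rangle$ and $\sigma_2^{2}\notin\langle\sigma_1\rangle$ (intersection condition, $q\ge 3$); since $p$ is even this leaves exactly the two faces $F_2$ and $F_2\sigma_1$, so by regularity of $\calQ^{\delta}$ every vertex of the dual has exactly two neighbours. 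For (d): with $p=2n$, comparing the formula at $2m$ and at $2(m+n)\equiv 2m$ forces $nk\equiv 0\pmod{2n}$, i.e.\ $k$ is even. For (e): as $F_1\sigma_2=F_2$ (because $\rho_2$ carries $F_1$ to $F_2$ while $\rho_1$ fixes $F_1$), the cyclic vertex list of $F_2$ is $(0\sigma_2,1\sigma_2,\dots,(p-1)\sigma_2)$; substituting the formulas, and using that $k$ even makes $(p/2)k\equiv 0$, gives $(k,1,0,1-k,-k,1-2k,-2k,\dots,k+1,k)$, which read starting at $1$ is the asserted list.

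The main obstacle is the middle paragraph: extracting the closed form $\sigma_1^{2m}\sigma_2=\sigma_2\sigma_1^{-mk}$ and hence the explicit action of $\sigma_2$ on $\Z_p$; once that is available, parts (a)--(e) are short. The only other delicate point is the reduction to ``$j=1$'', which rests on the equivalence ``no multiple edges $\iff$ $\langle\sigma_2\rangle$ core-free'' together with \pref{i-and-j-subgps}.
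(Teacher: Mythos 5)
Your proof is correct, but it takes a genuinely different route from the paper's. The paper argues combinatorially: after the counting argument for (a) (which you share), it introduces the involution $\gamma=\rho_0\s_1^k$, uses orientability directly (an odd automorphism cannot carry a flag $\Upsilon$ to $\Upsilon^{0,2}$) to conclude that $F_2$ contains the edge between $k$ and $k+1$, and then propagates this with $\s_1^k$ to show that $F_2$ shares every other edge with $F_1$; parts (b)--(e) are read off from this edge-sharing pattern, and no explicit formula for $\s_2$ is needed. You instead work algebraically: you normalize the tightness relation of \tref{first-rels} to $\s_2^{-1}\s_1=\s_1^{1-k}\s_2$ via the core-free characterization of ``no multiple edges'' (legitimately, since that equivalence is proved before this lemma in the paper), derive the closed-form action of $\s_2$ on $\Z_p$, and deduce (b), (d), (e) from the consistency of those formulas and (c) from $F_2\s_1^2=F_2$. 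In effect you establish Lemma \ref{l:theface}(a) first and obtain part (e) as a corollary, whereas the paper derives Lemma \ref{l:theface}(a) from part (e); the logical order is reversed but both are sound, and orientability enters your argument only through \tref{first-rels} rather than through the parity of $\gamma$. One small economy: in part (b) you can avoid relying on the unproved remark after \pref{FaithfulAction} about the full group, since $k\equiv 2$ makes $\s_2^2$ fix every vertex, whence $\s_2^2=1$ by \pref{FaithfulAction} itself, contradicting $q\ge 3$.
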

	
	\begin{proof}
	A tight polyhedron of type $\{p, q\}$ has $p$ vertices, and each vertex has $q$ neighbors; so in order to have no multiple edges, it
	must be that $p > q$, proving part (a).

	Recall that the base flag $\Phi$ consists of vertex $1$, the edge between $0$ and $1$, and $F_1$. The involutory automorphism
	$\gamma = \rho_0 \s_1^k$ maps $\Phi$ to the flag $\Psi$ consisting of vertex $k$, the edge between
	$k$ and $k+1$ and the face $F_1$. Note that $\gamma$ fixes $F_1$ and maps respectively the vertices $0$ and $1$ to $k+1$ and $k$. Then
	$F_2 (= F_1\sigma_2)$ is mapped to a face $F'$ sharing the edge between $k$ and $k+1$ with $F_1$.
	Furthermore, $\gamma$ fixes the edge between $1$ and $k$ (since it swaps their endpoints and there are no
	multiple edges), and since $\gamma$ sends $F_2$ to $F'$, it follows that $F'$ also contains that edge.
	Let $\Upsilon$ be the flag containing vertex $1$, the edge between $1$ and $k$, and
	the face $F_2$. Then $\gamma$ maps $\Upsilon$ to the flag containing vertex $k$, the edge between $1$ and $k$, and $F'$.
	If $F' \neq F_2$, then $\gamma$ maps $\Upsilon$ to $\Upsilon^{0,2}$. But $\cal Q$ is orientable, and $\gamma$ is an odd
	automorphism. So it must be that $F' = F_2$ so that $\gamma$ maps $\Upsilon$ to $\Upsilon^0$.
	Hence $F_2$ contains the edges between $0$ and $1$, between $1$ and $k$, and between $k$ and $k+1$.
	
	Now, the automorphism $\sigma_1^k$ fixes $F_1$, and it maps the edge between $0$ and $1$ to the edge between $k$ and $k+1$.
	Since $F_1$ and $F_2$ share both of those edges, it follows that $\s_1^k$ also fixes $F_2$.
	Therefore, $F_2$ also contains the edge between $k+1$ and $2k$, since that is the image of the edge between $1$
	and $k$. Finally, an inductive procedure shows that $F_2$ contains the edge between $nk$ and $nk+1$ and the edge between $nk+1$
	and $(n+1)k$ for every $n$. In particular, $F_2$ shares every other edge with $F_1$.
	
	If $p$ were odd, then $F_2$ would have to share every edge with $F_1$. Then there could only be two faces, which would
	imply that $q = 2$. Since $q \ge 3$, the parameter $p$ must be even. Furthermore, this means that $F_2$ shares half of its
	edges with $F_1$, and half of its edges with some other face. By regularity, every face must share its edges with exactly two
	distinct faces, which means that in the dual of $\calQ$, every vertex has exactly two neighbors.

	Just as $F_2$ shares half of its edges with $F_1$, the face $F_1$ shares half of its edges with $F_2$. If $F_1$
	shared two consecutive edges with $F_2$, then it would have to share all of them (by regularity), and so it must
	share every other edge with $F_2$. Since the two faces share the edge between $0$ and $1$, it follows that, for every $i$, they
	share the edge between $2i$ and $2i+1$ but not the edge from $2i$ to $2i-1$. Since they also share the edge between $k$ and $k+1$,
	it follows that $k$ is even, proving part (d). Part (e) immediately follows.
	\end{proof}

	\begin{lemma}\label{l:theface}
	Let $\cal Q$ be a tight orientably regular polyhedron of type $\{p, q\}$, with the usual setup.
	Then:
	\begin{itemize}
	  \item[(a)] \[i\sigma_2 = \left\{\begin{array}{ll}
	\frac{k(2-i)}{2}& \mbox{if $i$ is even}\\
	1+\frac{k(1-i)}{2}& \mbox{if $i$ is odd}.
	\end{array}\right.\]
	   \item[(b)] $(k/2)^2 \equiv 1$ modulo $p/2$.
	\end{itemize}
	\end{lemma}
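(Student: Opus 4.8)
The plan is to establish (a) by a direct combinatorial argument using the explicit cyclic order of $F_2$ from Lemma~\ref{l:p-k-restrictions}, and then to deduce (b) by feeding that formula into the relation $(\sigma_1\sigma_2)^2 = 1$.

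For part (a): the automorphism $\sigma_2 = \rho_1\rho_2$ fixes the base vertex (which is the vertex labelled $1$) and carries $F_1$ to $F_2 = F_1\sigma_2$. Since any automorphism maps the cyclic sequence of vertices around a face to the cyclic sequence of vertices around its image, $\sigma_2$ sends the cyclic list $(0,1,2,\dots,p-1)$ of vertices of $F_1$ to the cyclic list $(1,0,-k+1,-k,\dots,k+1,k)$ of vertices of $F_2$, and it fixes the occurrence of $1$ in both. The two neighbours of $1$ along $F_1$ are $0$ and $2$, and the two neighbours of $1$ along $F_2$ are $0$ and $k$; since $0\sigma_2 = k$ by the usual setup, we are forced to conclude $2\sigma_2 = 0$. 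Thus, as $i$ increases past $1$ the labels $i\sigma_2$ march along $F_2$ in the direction $1, 0, -k+1, -k, -2k+1, \dots$, while as $i$ decreases past $1$ they march in the opposite direction $1, k, k+1, 2k, \dots$. Reading the vertex labels off $F_2$ along both directions and recording which $i$ lands on which vertex gives, by an easy induction, exactly the two closed forms in the statement. Finally, since $p$ and $k$ are both even (Lemma~\ref{l:p-k-restrictions}), each of the two expressions depends on $i$ only modulo $p$, so the formula consistently describes $\sigma_2$ as a permutation of $\Z_p$.

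For part (b): I would use the relation $(\sigma_1\sigma_2)^2 = 1$, which holds in the group of every regular polyhedron by \eref{sinvolution} and may be rewritten as $\sigma_2\sigma_1\sigma_2 = \sigma_1^{-1}$. Applying this (right) action to the vertex $0$, and using $0\sigma_2 = k$ and $k\sigma_1 = k+1$, gives $(k+1)\sigma_2 = 0\sigma_1^{-1} = -1$. On the other hand $k+1$ is odd (because $k$ is even), so part (a) evaluates $(k+1)\sigma_2 = 1 + \frac{k(1-(k+1))}{2} = 1 - \frac{k^2}{2}$. Equating the two, $1 - \frac{k^2}{2} \equiv -1 \pmod p$, i.e.\ $\frac{k^2}{2} \equiv 2 \pmod p$. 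Writing $k = 2\ell$ and $p = 2s$ (legitimate since both are even), this says $2(\ell^2 - 1) \equiv 0 \pmod{2s}$, hence $\ell^2 \equiv 1 \pmod s$, which is exactly $(k/2)^2 \equiv 1 \pmod{p/2}$.

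The main obstacle, and the only genuinely delicate point, is getting part (a) exactly right: one has to recognise that $\sigma_2$ traverses $F_2$ in the orientation \emph{opposite} to the naive one induced from $F_1$ (this is forced precisely by $0\sigma_2 = k \ne 0$), and then keep the index bookkeeping straight so that the even-$i$ and odd-$i$ cases emerge in the stated form. After that, (b) is a one-line substitution into the relation $(\sigma_1\sigma_2)^2 = 1$.
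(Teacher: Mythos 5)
Your proof is correct. Part (a) is essentially the paper's argument: both read off the permutation $\sigma_2$ by matching the cyclic vertex list of $F_1$ against the clockwise list of $F_2$ from Lemma~\ref{l:p-k-restrictions}(e), pinned down by $1\sigma_2=1$ and $0\sigma_2=k$ (your observation that $2\sigma_2=0$ is forced by the neighbour structure is exactly the anchor the paper uses). For part (b) you take a genuinely different and somewhat leaner route. The paper first argues that $Core_{\G^+(\calQ)}(\langle\sigma_1\rangle)=\langle\sigma_1^2\rangle$ (using Lemma~\ref{l:p-k-restrictions}(c)), so that $\sigma_2^{-1}\sigma_1^2\sigma_2=\sigma_1^{2s}$ for some $s$; it then conjugates by $(\sigma_2\sigma_1)^2=1$ inside the group to get $\sigma_1^2=\sigma_1^{2s^2}$, hence $s^2\equiv 1\pmod{p/2}$, and only afterwards identifies $s=-k/2$ by evaluating on a vertex. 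You instead apply the single relation $\sigma_2\sigma_1\sigma_2=\sigma_1^{-1}$ directly to the vertex $0$ and substitute the explicit formula from (a), obtaining $k^2/2\equiv 2\pmod p$ in one step; this bypasses the normality/core fact entirely, at the cost of leaning entirely on the closed form in (a). Both computations check out (e.g.\ $(k+1)\sigma_2=1-k^2/2$ and $0\sigma_1^{-1}=-1$ give $2(k/2)^2\equiv 2\pmod{2\cdot(p/2)}$, i.e.\ $(k/2)^2\equiv 1\pmod{p/2}$), so the argument is sound; the paper's version has the side benefit of exhibiting the conjugation action $\sigma_2^{-1}\sigma_1^2\sigma_2=\sigma_1^{-k}$ explicitly, which is in the spirit of how the surrounding results are used, but your shortcut is a valid proof of the stated congruence.
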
	

	\begin{proof}
	The automorphism $\s_2$ sends vertex $0$ to vertex $k$ and fixes $1$. Proceeding clockwise around $F_1$ and applying $\s_2$ gives us the vertices
	of $F_2$ in clockwise order. From Lemma \ref{l:p-k-restrictions}, the clockwise order of the vertices in $F_2$ is $(1,0,-k+1,-k,-2k+1,-2k,\dots,k+1,k)$, and part (a) follows.

	For part (b), note first that since Lemma \ref{l:p-k-restrictions}
	says that every vertex of the dual of $\cal Q$ has exactly two neighbors, it follows that $\Core_{\G^+({\cal Q})}(\langle \sigma_1 \rangle)
	= \langle \sigma_1^2 \rangle$. In particular, this means that $\s_2^{-1} \s_1^2 \s_2 = \s_1^{2s}$ for some $s$.
	Now, since $(\s_2 \s_1)^2 = 1$,
	\begin{align*}
	\s_1^2 &= (\s_2 \s_1)^{-2} \s_1^2 (\s_2 \s_1)^2 \\
	&= (\s_1^{-1} \s_2^{-1} \s_1^{-1}) \s_1^{2s} (\s_1 \s_2 \s_1) \\
	&= \s_1^{-1} \s_2^{-1} \s_1^{2s} \s_2 \s_1 \\
	&= \s_1^{-1} \s_1^{2s^2} \s_1 \\
	&= \s_1^{2s^2}.
	\end{align*}
	Therefore $2s^2 \equiv 2$ (mod $p$); that is, $s^2 \equiv 1$ (mod $p/2$).
	Now, $F_1\sigma_2 = F_2$ and therefore, with the labeling of vertices as in
	Lemma \ref{l:theface}, $2\sigma_2 = 0$, $1\sigma_2 =1$, $0\sigma_2 = k$, $(p-1)\sigma_2 = k+1$ and so on. Then
	\[0 = 2\sigma_2 = 0\sigma_1^2 \sigma_2 = 0\sigma_2 \sigma_1^{2s} = k\sigma_1^{2s},\]
	and so $s = -k/2$. This implies that $1 \equiv (-k/2)^2 \equiv (k/2)^2$ (mod $p/2$).
	\end{proof}
			
	Lemma \ref{l:theface} establishes the order of the vertices in $F_2$, which is the face sharing the edge between $0$ and $1$ with $F_1$ , and it determines
	the action of $\sigma_2$ on the vertices. It follows from \pref{FaithfulAction} that, in order to characterize all tight orientably regular polyhedra with no
	multiple edges, we only need to determine all possible values of $p$ and $k$. (Note that the value of $q$ plays no role on the expressions
	of $\sigma_1$ and $\sigma_2$.)
	
	Lemma \ref{l:theface} (b) imposes a strong condition on the value of $k$. The following lemma suggests how restrictive this condition is. The proof is straightforward and omitted, but see \cite[Section 1.2]{number-of-roots} for the number $a(n)$ of solutions of $x^2=1$ in $\mathbb{Z}_n^*$.
	
	\begin{lemma}\label{l:primes}
	Let $P$ be a prime and $n$ a positive integer, and let $X_{P,n}$ be the set of integers $1 \le x \le P^{n-1}$ satisfying that $x^2 \equiv 1$ modulo $P^n$. Then
	\begin{itemize}
	  \item[(a)] $X_{2,1} = \{1\}$, $X_{2,2} = \{1,3\}$ and $X_{2,n} = \{1,2^{n-1}-1,2^{n-1}+1,2^n-1\}$ if $n\ge 3$.
	  \item[(b)] $X_{P,n} = \{1,P^n-1\}$ if $P$ is odd.
	\end{itemize}
	\end{lemma}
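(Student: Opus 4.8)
The plan is to recognize $X_{P,n}$ as a set of representatives for the square roots of $1$ in $\Z/P^n\Z$, so that the entire lemma reduces to solving the congruence $x^2 \equiv 1 \pmod{P^n}$, equivalently $P^n \mid (x-1)(x+1)$. The one structural fact I would lean on is that $\gcd(x-1,x+1)$ divides $2$, which tightly restricts how the prime power $P^n$ can be distributed between the two factors $x-1$ and $x+1$; the two parts of the lemma then correspond to the cases $P$ odd (so $P$ is coprime to $2$) and $P = 2$.

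For part (b), since $P$ is odd it cannot divide both $x-1$ and $x+1$, as it would then divide their difference $2$; hence $P$ divides exactly one of them, and therefore so does all of $P^n$. This forces $x \equiv 1$ or $x \equiv -1 \pmod{P^n}$, and the only representatives are $1$ and $P^n-1$, which are distinct because $P^n > 2$. Alternatively one may simply quote that $(\Z/P^n\Z)^\times$ is cyclic of even order $P^{n-1}(P-1)$ and hence contains exactly two elements of order dividing $2$.

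For part (a), I would dispose of $n = 1$ and $n = 2$ by direct inspection modulo $2$ and modulo $4$, and then handle $n \ge 3$ as follows. Every solution is odd, so write $x = 2m+1$; then $(x-1)(x+1) = 4m(m+1)$, and since $m$ and $m+1$ are coprime with exactly one of them even, the condition $2^n \mid 4m(m+1)$ is equivalent to $2^{n-2} \mid m$ or $2^{n-2} \mid (m+1)$. Enumerating those $m$ and translating back through $x = 2m+1$ yields exactly the four claimed values $1,\ 2^{n-1}-1,\ 2^{n-1}+1,\ 2^{n}-1$; I would then confirm by direct expansion, for instance $(2^{n-1}\pm 1)^2 = 2^{2n-2} \pm 2^{n} + 1 \equiv 1 \pmod{2^n}$, that each is a genuine solution and that the four residues are pairwise distinct precisely when $n \ge 3$ (they coincide in pairs and collapse to $\{1,3\}$ when $n = 2$). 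As an alternative one may invoke $(\Z/2^n\Z)^\times \cong \Z/2 \times \Z/2^{n-2}$, whose $2$-torsion subgroup has order $4$ for $n \ge 3$.

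I do not expect any real obstacle: the argument is routine divisibility bookkeeping, and the only points needing care are the exceptional small moduli $P$ and $P^2$ in the case $P = 2$, and the verification that the four square roots of $1$ modulo $2^n$ are genuinely distinct exactly from $n = 3$ onward. For the count of such solutions as a function of $n$ one can also consult \cite{number-of-roots}.
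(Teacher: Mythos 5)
Your proof is correct. The paper actually omits the proof of this lemma entirely (it is declared ``straightforward'' and the reader is referred to \cite{number-of-roots} only for the count of solutions), and the argument you give --- factoring $x^2-1=(x-1)(x+1)$, using that $\gcd(x-1,x+1)\mid 2$ to force $P^n$ into one factor when $P$ is odd, and the substitution $x=2m+1$ with $2^{n-2}\mid m(m+1)$ for $P=2$ --- is exactly the standard elementary argument the authors presumably had in mind. One minor remark: the range in the statement reads ``$1\le x\le P^{n-1}$,'' which is evidently a typo for $1\le x\le P^n-1$ (otherwise $3\notin X_{2,2}$); you correctly treated $X_{P,n}$ as a full set of residue representatives modulo $P^n$.
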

	
	In general, if $p = P_1^{\alpha_1} \cdots P_s^{\alpha_s}$ with $P_1, \dots, P_s$ distinct primes and $P_1=2$, then $(k/2)^2 \equiv 1$ (mod $p/2$) if and only if $(k/2)^2 \equiv 1$ (mod $2^{\alpha_1-1})$ and $(k/2)^2 \equiv 1$ (mod $P_i^{\alpha_i}$) for $i\ge 2$.
	
	We now obtain $q$ from $p$ and $k$. The value of $q$ is the order of $\sigma_2$, or alternatively, the smallest positive $m$ such that $2\sigma_2^m = 2$. An inductive procedure shows that for $m \ge 2$,
	\[2\sigma_2^{m} = 2\left(k/2 - \left(k/2\right)^2 + \left(k/2\right)^3 - \cdots + \left(-1\right)^{m}\left(k/2\right)^{m-1}\right).\]
	This implies that $q$ is the smallest positive $m$ satisfying that
	\[2 \equiv 2\left(k/2 - \left(k/2\right)^2 + \left(k/2\right)^3 - \cdots + \left(-1\right)^{m-1}\left(k/2\right)^{m}\right) \quad \mbox{(mod $p$)},\]
	which by Lemma \ref{l:theface} (b) is equivalent to
	\begin{equation}\label{eq:Orderm}
	2 \sigma_2^m \equiv \left\{\begin{array}{ll}
	                  \left(k-2\right)(m-1)/2 & \mbox{if $m$ is odd}\\
	                  k+\left(k-2\right)(m-2)/2 & \mbox{if $m$ is even}
	                  \end{array} \right. \quad \mbox{(mod $p$).}
	\end{equation}
	Therefore $q$ is the smallest positive $m$ satisfying (\ref{eq:Orderm}).
	
	Now we are ready to state our main results about tight orientably regular polyhedra with no multiple edges.
	
	\begin{proposition}\label{p:qOdd}
	Let $\mathcal{Q}$ be a tight orientably regular polyhedron with no multiple edges and Schl\"afli type $\{p,q\}$ with $q$ odd. Then $p=2q$. Furthermore, it is unique up to isomorphism.
	\end{proposition}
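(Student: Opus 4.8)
The plan is to work entirely within the ``usual setup'' and exploit the two facts already established: the order $q$ of $\sigma_2$ is the smallest positive $m$ satisfying \eref{Orderm}, and $k$ must satisfy $(k/2)^2 \equiv 1 \pmod{p/2}$ by \lref{theface}(b). Since $q$ is odd, I would first look at \eref{Orderm} for odd $m$: it says $2\sigma_2^m \equiv (k-2)(m-1)/2 \pmod p$, and $\sigma_2^q$ fixing vertex $2$ means $2 \equiv (k-2)(q-1)/2 \pmod p$, i.e. $(k-2)(q-1) \equiv 4 \pmod{2p}$. So the first step is to extract arithmetic constraints linking $p$, $q$, and $k$ from this congruence together with $(k/2)^2 \equiv 1 \pmod{p/2}$.

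Next I would pin down $k$. Write $k = 2\ell$, so $\ell^2 \equiv 1 \pmod{p/2}$ and the congruence above becomes $(\ell-1)(q-1) \equiv 2 \pmod p$. The key observation is that $\gcd(\ell-1, p/2)$ divides $\gcd(\ell^2-1, p/2)$ in a controlled way, and combined with $q$ odd (so $q-1$ is even) this forces $\ell - 1$ to be close to $\pm 1$ modulo $p/2$ — concretely, I expect to show $\ell \equiv -1 \pmod{p/2}$ is impossible (it would make $q$ even or $2$) so $\ell \equiv 1$, i.e. $k \equiv 2 \pmod p$, OR one of the genuinely ``new'' square roots of $1$ mod $p/2$ occurs. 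Then $k \equiv 2$ forces $\sigma_2$ to act as $2\sigma_2^m \equiv 0$ for the relevant parities, collapsing to $q = 2$, contradicting $q \ge 3$; so I must be in a case where $p/2$ has a nontrivial structure. Here I would invoke \lref{primes}: the nontrivial square roots of $1$ modulo $p/2$ force $p/2$ to have a very restricted prime factorization, and the further requirement that the resulting order $q$ (computed from \eref{Orderm}) be \emph{odd} should eliminate all cases except the one giving $p = 2q$. I would then verify directly, by substituting the surviving value of $k$ into \eref{Orderm}, that the order of $\sigma_2$ is exactly $q$ and that $p = 2q$.

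For uniqueness: once $p = 2q$ and $q$ is odd are fixed, the argument above should leave exactly one admissible residue for $k$ modulo $p$ (up to the symmetry already built into the setup), and by the remark following \lref{theface} — that $\sigma_1$ and $\sigma_2$ are completely determined as permutations of the vertex set by $p$ and $k$, and by \pref{FaithfulAction} the group embeds in that symmetric group — two such polyhedra with the same $p$, $q$, $k$ must be isomorphic. So I would close by noting that the permutation action determines $\G^+(\calQ)$, hence $\calQ$, up to isomorphism.

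The main obstacle I anticipate is the number-theoretic case analysis in the middle step: showing that among all $k$ with $(k/2)^2 \equiv 1 \pmod{p/2}$, the demand that the resulting $q$ from \eref{Orderm} is odd is so rigid that it forces $p = 2q$. The clean way to organize this is probably to treat $p/2$ prime-power by prime-power via \lref{primes}, compute $2\sigma_2^m$ modulo each prime power, and observe that oddness of $q$ together with $q \geq 3$ rules out every configuration except $k - 2$ being a unit that generates, forcing the order to be exactly $q$ with $p = 2q$. Everything else (the initial congruence manipulation and the final uniqueness argument) should be routine.
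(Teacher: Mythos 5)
Your reduction to the congruence $(k/2-1)(q-1)\equiv 2 \pmod p$ together with $(k/2)^2\equiv 1\pmod{p/2}$ is the right starting point, and your uniqueness argument (the value of $k$ determines $\sigma_1$ and $\sigma_2$ as permutations, and by \pref{FaithfulAction} this determines the polyhedron) is exactly what the paper does. But the middle step — pinning down $k$ — is where your plan has a genuine gap, and the direction you propose is in fact backwards. You expect to show $k/2\equiv -1\pmod{p/2}$ is \emph{impossible} and that the answer must come from one of the ``new'' square roots of unity via \lref{primes}, requiring a prime-by-prime case analysis. In reality $k/2\equiv-1\pmod{p/2}$ (equivalently $k=p-2$) is precisely the value that occurs: with $p=2q$ and $k=2q-2$ one checks directly from \eref{Orderm} that $\sigma_2$ has odd order $q$. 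So the case analysis you sketch would, if pursued honestly, eliminate the wrong branch and leave you stuck in the branch you planned to discard.

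The step you are missing is a one-line trick that avoids all of \lref{primes}: multiply $(k/2-1)(q-1)\equiv 2\pmod p$ by $(k/2+1)$ to get $k+2\equiv\bigl((k/2)^2-1\bigr)(q-1)\pmod p$. Since $(k/2)^2-1$ is a multiple of $p/2$ and $q-1$ is even (here is where the oddness of $q$ enters), the right-hand side is $\equiv 0\pmod p$, so $k\equiv -2\pmod p$ with no case analysis at all. Substituting $k=p-2$ back into \eref{Orderm} gives $-2(q-1)\equiv 2\pmod p$, i.e.\ $p\mid 2q$, and then $p$ even together with $p>q$ from \lref{p-k-restrictions}(a) — a fact your outline never invokes but which is essential to rule out $p$ being a \emph{proper} divisor of $2q$ — forces $p=2q$. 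As written, your proposal does not reach $p\mid 2q$ by any concrete route, and the anticipated ``number-theoretic case analysis'' it defers to would be organized around the wrong candidate for $k$.
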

	
	\begin{proof}
	From Equation (\ref{eq:Orderm}) we know that if $q$ is odd then $2\equiv(k/2-1)(q-1)$ (mod $p$). Multiplying both sides by $(k/2+1)$ yields
	that $k+2 \equiv ((k/2)^2-1)(q-1)$ (mod $p$), and since $(k/2)^2 \equiv 1$ (mod $p/2$) and $q$ is odd, it follows that $k+2 \equiv 0$ (mod $p$); in other
	words, the only choice for $k$ is $k = p-2$. Substituting in Equation (\ref{eq:Orderm}) we obtain that $-2(q-1) \equiv 2$ (mod $p$) and hence $p$ divides $2q$.
	Since $p$ is even and $p > q$ (by Lemma \ref{l:p-k-restrictions}), it follows that $p = 2q$. Such a polyhedron is unique up to isomorphism since $k$ is determined by the value of $p$
	and by the fact that $q$ is odd.
	\end{proof}

This result is also a consequence of \cite[Thm.3.4]{tight2}, since if $p$ is a proper divisor of $2q$ then $p \le q$ and $\cal Q$ must have multiple edges.

	\begin{proposition}\label{p:4items}
	Let $\mathcal{Q}$ be a tight orientably regular polyhedron with no multiple edges and Schl\"afli type $\{p,q\}$ with $q \ge 4$ even. Let $p=P_1^{\alpha_1} P_2^{\alpha_2} \cdots P_s^{\alpha_s}$ with $P_i$ prime, $P_i < P_{i+1}$ and $\alpha_i \ge 1$ for all $i$. Then
	\begin{itemize}
	  \item[(a)] $P_1 = 2$;
	  \item[(b)] the maximal power of $2$ that divides $q$ is either $2$, $4$ or $2^{\alpha_1-1}$, and it is $4$ only if $\alpha_1 \ge 3$;
	  \item[(c)] for every $i \ge 2$, either $q$ is coprime with $P_i$ or $P_i^{\alpha_i}$ divides $q$; and
	  \item[(d)] $q$ divides $p$.
	\end{itemize}
	\end{proposition}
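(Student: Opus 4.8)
The plan is to run the argument entirely off the explicit action of $\s_2$ on the vertex set that has already been pinned down. By Lemma~\ref{l:p-k-restrictions}(b),(d) both $p$ and $k$ are even, so I write $p = 2n$ and $\ell = k/2$, and Lemma~\ref{l:theface}(b) becomes $\ell^2 \equiv 1 \pmod{n}$. Part (a) is then just Lemma~\ref{l:p-k-restrictions}(b). The picture behind everything else is that, by the formula in Lemma~\ref{l:theface}(a), $\s_2$ preserves the parity of the vertex labels and acts on the even vertices --- identified with $\Z_n$ by $2a \leftrightarrow a$ --- as the affine permutation $a \mapsto \ell(1-a)$, whose square is the translation $a \mapsto a + (\ell-1)$ (this is where $\ell^2 \equiv 1 \pmod n$ is used); the actual computation I would take straight from \eref{Orderm}.

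First I would compute $q$ in closed form. Since $q$ is the least $m \ge 1$ with $2\s_2^m = 2$, substituting $k = 2\ell$ and $p = 2n$ into \eref{Orderm} and cancelling a factor of $2$, the condition $2\s_2^m \equiv 2 \pmod{p}$ becomes $(\ell-1)t \equiv 0 \pmod{n}$ when $m = 2t$, and $(\ell-1)t \equiv 1 \pmod{n}$ when $m = 2t+1$. Put $d = \gcd(\ell - 1, n)$. If $d = 1$, the odd congruence has a solution $t \in \{1,\dots,n-1\}$, giving an odd $m = 2t+1 \le 2n-1$ smaller than the least even solution $m = 2n$; then $q$ would be odd, against the hypothesis. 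So $d > 1$ (and, using $q \ge 4$, also $d < n$), no odd $m$ qualifies, and $q$ is the least even solution:
\[ q = \frac{2n}{d} = \frac{p}{\gcd(p/2,\ k/2 - 1)}. \]
As $d \mid n$, this yields $q \mid 2n = p$, proving (d).

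It then remains to read off the factorisation of $q = 2n/d$, i.e. to compute $v_P(q) = v_P(2n) - \min\{v_P(n),\,v_P(\ell-1)\}$ for each prime $P$, where $v_P$ is the $P$-adic valuation and $n = 2^{\alpha_1-1}P_2^{\alpha_2}\cdots P_s^{\alpha_s}$. For an odd prime $P_i$, $i \ge 2$: from $\ell^2 \equiv 1 \pmod{P_i^{\alpha_i}}$, Lemma~\ref{l:primes}(b) gives $\ell \equiv 1$ or $\ell \equiv -1 \pmod{P_i^{\alpha_i}}$, and these cannot both hold; if $\ell \equiv 1$ then $P_i^{\alpha_i} \mid \ell-1$ and $v_{P_i}(q) = 0$, while if $\ell \equiv -1 \not\equiv 1$ then $\ell-1 \equiv -2$ is prime to $P_i$ and $v_{P_i}(q) = \alpha_i$ --- exactly the dichotomy in (c). For $P = 2$: set $\beta = \alpha_1 - 1 = v_2(n)$, so $v_2(2n) = \alpha_1$ and $\ell^2 \equiv 1 \pmod{2^\beta}$; running through the short list of square roots of $1$ modulo $2^\beta$ supplied by Lemma~\ref{l:primes}(a), one checks that in every case $\min\{\beta,\,v_2(\ell-1)\}$ equals $\beta$, $\beta-1$, or $1$, so $v_2(q) \in \{1,\,2,\,\alpha_1-1\}$, which is (b).

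The step I expect to require the most care is that last one: the square roots of $1$ modulo $2^\beta$ genuinely depend on whether $\beta$ is $0$, $1$, $2$, or at least $3$ (Lemma~\ref{l:primes}(a)), and for each family one must correctly evaluate $v_2(\ell-1)$ --- hence $v_2(d)$ and $v_2(q)$ --- without dropping or double-counting a case. Everything earlier is essentially mechanical once the formula $q = 2n/\gcd(n, k/2-1)$ is in hand; the one thing to be careful about there is to eliminate the odd values of $m$ before concluding, since that is precisely the point at which the hypothesis that $q$ is even is used.
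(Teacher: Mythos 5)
Your proposal is correct and follows essentially the same route as the paper: both extract from Equation~\textup{\ref{eq:Orderm}} the condition $(k/2-1)\,q/2\equiv 0\pmod{p/2}$ with $q/2$ minimal, and then analyze it prime by prime using Lemma~\textup{\ref{l:primes}}. Your closed form $q=2n/\gcd(n,k/2-1)$ and the $P$-adic bookkeeping are just a tidier packaging of the paper's case analysis (and your explicit elimination of odd $m$ makes the use of the hypothesis that $q$ is even a bit more transparent than in the paper).
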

	
	\begin{proof}
	Part (a) follows from Lemma \ref{l:p-k-restrictions} (b).
	
	From Equation (\ref{eq:Orderm}) and Lemma \ref{l:theface} (b) we have that $(k-2)q/2 \equiv 0$ (mod $p$). This implies that $(k/2-1)q/2\equiv 0$ modulo $p/2$. This is equivalent to
	\begin{equation}\label{eq:qkprimes}
	(k/2-1)q/2\equiv 0 \,\,\mbox{(mod $2^{\alpha_1-1}$)} \quad \mbox{and} \quad (k/2-1)q/2 \equiv 0 \,\,\mbox{(mod $P_i^{\alpha_i}$) for all $i \ge 2$.}
	\end{equation}
	
	If $\alpha_1\ge 4$, Lemma \ref{l:primes} (a) implies that $k/2-1$ is congruent to either $0$, $2^{\alpha_1-2}-2$, $2^{\alpha_1-2}$ or $-2$ (mod $2^{\alpha_1-1}$). First note that if $k/2-1 \equiv 0$ (mod $2^{\alpha_1-1}$) then any even value of $q$ satisfies the left hand side of Equation (\ref{eq:qkprimes}). On the other hand, if $k/2-1 \equiv 2^{\alpha_1-2}-2$, (resp. $k/2-1 \equiv 2^{\alpha_1-2}$, $k/2-1 \equiv -2$) then $2^{\alpha_1-2}$ (resp. $2$, $2^{\alpha_1-2}$) divides $q/2$. Conversely, if $2^{\alpha_1-2}$ (resp. $2$, $2^{\alpha_1-2}$) divides $q/2$ then the left part of Equation (\ref{eq:qkprimes}) is satisfied whenever $k/2-1 \equiv 2^{\alpha_1-2}-2$, (resp. $k/2-1 \equiv 2^{\alpha_1-2}$, $k/2-1 \equiv -2$).

If $\alpha_1 = 3$ then $k/2-1$ is congruent to either $0$ or $2$ modulo $4$. In the case when $k/2-1 \equiv 0$ (mod $4$) no restriction is imposed to $q/2$, whereas if $k/2-1 \equiv 2$ (mod $4$) then $q/2$ must be even. If $\alpha_1 \in \{1,2\}$ then (\ref{eq:qkprimes}) is always satisfied and no restriction is imposed on $q/2$.
	
	From Lemma \ref{l:primes} (b) we know that $k/2 \equiv \pm1$ modulo $P_i^{\alpha_i}$ for all $i\ge 2$. Therefore $k/2-1$ is congruent to either $0$ or $-2$ modulo $P_i^{\alpha_i}$. Since $2$ is coprime with $P_i^{\alpha_i}$ we observe that if $k/2-1 \equiv -2$ modulo $P_i^{\alpha_i}$, then in order to satisfy the right hand side of Equation (\ref{eq:qkprimes}), $P_i^{\alpha_i}$ must divide $q/2$, and any even value of $q$ with this property will work. Otherwise, if $P_i^{\alpha_i}$ divides $k/2-1$ then any even $q$ satisfies the right hand side of Equation (\ref{eq:qkprimes}).
	
	Since $q/2$ is the smallest positive integer satisfying that $(k/2-1)q/2 \equiv 0$ modulo $p/2$, the only factors of $q/2$ are those required by the restrictions in the previous three paragraphs. In particular, if $k/2 \equiv 1$ (mod $2^{\alpha_1-1}$) (resp. to $2^{\alpha_1-2}-1$, $2^{\alpha_1-2}+1$ or $-1$) then $q/2$ is odd (resp. $2^{\alpha_1-1}$, $4$ or $2^{\alpha_1-1}$ is the maximal power of $2$ dividing $q$), implying (b). Furthermore, if $i \ge 2$ and $k/2 \equiv 1$ (mod $P_i^{\alpha_i}$) then $P_i$ does not divide $q$, implying (c). Hence all factors of $q$ are also factors of $p$ and (d) holds.
	\end{proof}
	
	We are now ready to fully characterize the tight orientably regular polyhedra with no multiple edges.
	
	\begin{theorem}\label{th:RegularTight}
	Let $p = P_1^{\alpha_1} P_2^{\alpha_2} \cdots P_s^{\alpha_s}$ with $P_1 = 2$, $P_1, \dots, P_s$ distinct primes and each $\alpha_i$ a positive integer. For any even $q$ with $4 \le q < p$ satisfying (b), (c) and (d) of Proposition \ref{p:4items} there exists a tight orientably regular polyhedron with no multiple edges and type $\{p,q\}$. The polyhedron is unique unless $\alpha_1 \ge 4$ and $2^{\alpha_1-1}$ divides $q$, in which case there are two such polyhedra. Moreover, every tight regular polyhedron with no multiple edges either has one of these types, or has type $\{2q,q\}$ for some odd $q$, or it corresponds to the map of type $\{p,2\}$ on the sphere.
	\end{theorem}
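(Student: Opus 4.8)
The plan is to establish the two halves of the statement separately. The ``Moreover'' clause is the short one. Let $\mathcal Q$ be a tight orientably regular polyhedron with no multiple edges, of type $\{p,q\}$. If $q=2$, then $\mathcal Q$ is the familiar map $\{p,2\}$ on the sphere (and $p\ge3$, since there are no multiple edges). If $q$ is odd with $q\ge3$, the usual setup applies and Proposition~\ref{p:qOdd} forces $p=2q$. If $q$ is even with $q\ge4$, then Lemma~\ref{l:p-k-restrictions}(a) gives $q<p$, and Proposition~\ref{p:4items} gives the prime factorization of $p$ together with conditions (b), (c), (d). Since every $q\ge2$ is covered, the last sentence follows.

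For the existence-and-count half, fix $p=2^{\alpha_1}P_2^{\alpha_2}\cdots P_s^{\alpha_s}$ and an even $q$ with $4\le q<p$ satisfying (b), (c), (d). First I would pin down the admissible parameter: by Lemmas~\ref{l:p-k-restrictions}, \ref{l:theface} and Proposition~\ref{prop:FaithfulAction}, any tight orientably regular polyhedron of type $\{p,q\}$ with no multiple edges is determined up to isomorphism by a single even $k$, where $\sigma_1\colon i\mapsto i+1$ and $\sigma_2$ acts by the formula of Lemma~\ref{l:theface}(a), subject to $(k/2)^2\equiv1\pmod{p/2}$ and $\mathrm{ord}(\sigma_2)=q$. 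Two distinct such $k$ give non-isomorphic polyhedra, since an isomorphism carrying base flag to base flag restricts to a bijection of $\Z_p$ centralizing $\sigma_1$ (hence a translation) and fixing the vertex $1$, hence the identity, which would force the two $\sigma_2$'s to coincide. Counting the admissible $k$ is the analysis in the proof of Proposition~\ref{p:4items} read in reverse: by the Chinese Remainder Theorem and Lemma~\ref{l:primes}, the residue $k/2$ modulo each odd $P_i^{\alpha_i}$ is forced (to $1$ if $P_i\nmid q$, to $-1$ if $P_i^{\alpha_i}\mid q$, and condition (c) guarantees one of these holds), while modulo $2^{\alpha_1-1}$ the residue $k/2$ must be a square root of $1$ whose associated $2$-part of $q$ equals the actual $2$-part of $q$; by Lemma~\ref{l:primes}(a) this leaves a unique choice unless $\alpha_1\ge4$ and the $2$-part of $q$ is $2^{\alpha_1-1}$ (equivalently $2^{\alpha_1-1}\mid q$, given (b) and (d)), in which case there are exactly two. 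A routine prime-by-prime check, using $q\mid p$, confirms that each such $k$ does yield $\mathrm{ord}(\sigma_2)=q$.

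Next I would verify that a given admissible $k$ actually produces a polyhedron. Define permutations of $\Z_p$ by $i\sigma_1=i+1$, $i\rho_1=2-i$, and $\sigma_2$ as above; set $\rho_0=\sigma_1\rho_1$, $\rho_2=\rho_1\sigma_2$, and $\Gamma=\langle\rho_0,\rho_1,\rho_2\rangle\le S_p$. A direct computation---using only that $k$ is even and $(k/2)^2\equiv1\pmod{p/2}$---shows that $\sigma_2$ is a bijection, that $\rho_0,\rho_1,\rho_2$ are involutions, that $\rho_1$ conjugates $\sigma_1$ and $\sigma_2$ to their inverses, and that $\rho_0\rho_2=\sigma_1\sigma_2$ satisfies $(\sigma_1\sigma_2)^2=1$; hence $\Gamma$ is an sggi. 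Since $\sigma_1$ is a $p$-cycle and $\sigma_2$ has order $q$, the subgroup $\langle\rho_0,\rho_1\rangle=\langle\sigma_1,\rho_1\rangle$ is dihedral of order $2p$, transitive on $\Z_p$ with $\langle\rho_1\rangle$ the stabilizer of the vertex $1$, while $\langle\rho_1,\rho_2\rangle=\langle\rho_1,\sigma_2\rangle$ is dihedral of order $2q$ and fixes the vertex $1$; therefore $\langle\rho_0,\rho_1\rangle\cap\langle\rho_1,\rho_2\rangle=\langle\rho_1\rangle$, and by \cite[Prop 2E16(a)]{arp}, $\Gamma$ is a string C-group of type $\{p,q\}$. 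Let $\mathcal Q$ be the associated regular polyhedron.

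The step I expect to be the crux is showing $\mathcal Q$ is tight---after which orientability and the absence of multiple edges come cheaply. The key fact is that $\langle\sigma_1^2\rangle$ is normal in $\Gamma^+=\langle\sigma_1,\sigma_2\rangle$, which reduces to the identity $\sigma_2^{-1}\sigma_1^2\sigma_2=\sigma_1^{-k}$, a direct computation from the formula for $\sigma_2$ (the same one underlying Lemma~\ref{l:theface}(b)). Modulo $\langle\sigma_1^2\rangle$ the image of $\sigma_1$ is an involution with $(\bar\sigma_1\bar\sigma_2)^2=1$, so $\langle\bar\sigma_2\rangle$ is normal of index at most $2$ and $|\Gamma^+/\langle\sigma_1^2\rangle|\le2q$; hence $|\Gamma^+|\le pq$. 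On the other hand $\langle\sigma_1\rangle\cap\langle\sigma_2\rangle=\{1\}$ (as $\langle\sigma_1\rangle$ is semiregular on $\Z_p$ and $\sigma_2$ fixes the vertex $1$), so the set $\langle\sigma_1\rangle\langle\sigma_2\rangle$ already has $pq$ elements; thus $\Gamma^+=\langle\sigma_1\rangle\langle\sigma_2\rangle$ has order exactly $pq$. Now $\mathcal Q$ has type $\{p,q\}$, so it has at least $2pq$ flags and $|\Gamma|\ge2pq$, while $|\Gamma|\le2|\Gamma^+|=2pq$ because $\Gamma=\Gamma^+\langle\rho_1\rangle$ with $\Gamma^+$ normal; hence $[\Gamma:\Gamma^+]=2$ and $|\Gamma|=2pq$, so $\mathcal Q$ is orientably regular and tight. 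Finally $\Gamma^+\le S_p$ acts faithfully on $\Z_p$, and since $|\Gamma^+|=pq$ forces the stabilizer of the vertex $1$ to equal $\langle\sigma_2\rangle$, this is the action of $\Gamma^+$ on the vertices of $\mathcal Q$; so $Core_{\Gamma^+}(\langle\sigma_2\rangle)$, the kernel of this action, is trivial, and by the proposition immediately preceding Corollary~\ref{c:NoMultEd}, $\mathcal Q$ has no multiple edges. Combined with the count above, this gives exactly the asserted number of polyhedra of each admissible type, finishing the proof.
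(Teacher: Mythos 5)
Your proof is correct, and its first half---pinning the polyhedron down to a single even parameter $k$ with $(k/2)^2\equiv 1\pmod{p/2}$ and counting the admissible $k$ prime-by-prime via Lemma~\ref{l:primes} and the congruences extracted from the proof of Proposition~\ref{p:4items}---is the same as the paper's. Where you genuinely diverge is in the existence argument. The paper builds the object combinatorially as a map: it writes down the cyclic vertex-sequence of the face determined by two consecutive neighbours $x,y$ of the base vertex, observes that each edge then lies in exactly two of the resulting $q$ faces (using that $x$ a neighbour of $1$ forces $2-x$ to be one as well), and checks the diamond condition and strong flag connectivity directly. You instead realize the would-be automorphism group as a permutation group on $\Z_p$, verify the sggi relations (all of which reduce to $k$ being even and $(k/2)^2\equiv 1\pmod{p/2}$), obtain the intersection condition from the observation that $\langle\rho_1,\rho_2\rangle$ fixes vertex $1$ while the stabilizer of $1$ in the vertex-transitive dihedral group $\langle\rho_0,\rho_1\rangle$ is exactly $\langle\rho_1\rangle$, and then deduce tightness from the order count $|\Gamma^+|\le (p/2)\cdot 2q=pq=|\langle\sigma_1\rangle\langle\sigma_2\rangle|$ coming from the normality of $\langle\sigma_1^2\rangle$. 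This group-theoretic route delivers orientability, tightness, and the absence of multiple edges as clean corollaries of $|\Gamma^+|=pq$ together with the faithfulness of the degree-$p$ action, at the cost of making the faces less visible; the paper's construction makes the local face structure explicit but leaves the flag-connectivity check somewhat terse. Both arguments delegate the same residual computation to the reader, namely that the chosen $k$ makes the order of $\sigma_2$ exactly $q$ (your ``routine prime-by-prime check,'' which is precisely where conditions (b)--(d) of Proposition~\ref{p:4items} are consumed), so I regard your proposal as a complete alternative proof rather than one with a gap.
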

	
	\begin{proof}
	We already know by Proposition \ref{p:qOdd} that if $q$ is odd then $p = 2q$ and $k=-2$.
	
	If $q \ge 4$ is even then we find $k/2 \in \mathbb{Z}_{p/2}$ as a solution of the congruences
	\begin{eqnarray*}
	k/2 \equiv 1 \mbox{\, (mod $2^{\alpha_1-1}$)} && \mbox{if $q/2$ is odd ($\alpha_1 \ge 2$)},\\
	k/2 \equiv 2^{\alpha_1-2}-1,-1 \mbox{\, (mod $2^{\alpha_1-1}$)} && \mbox{if $2^{\alpha_1-2}$ divides $q/2$ ($\alpha_1 \ge 4$)},\\
	k/2 \equiv 2^{\alpha_1-2}+1 \mbox{\, (mod $2^{\alpha_1-1}$)} && \mbox{if $q/2$ is even but not divisible by $4$ ($\alpha_1 \ge 2$),} \\
	k/2 \equiv 1 \mbox{\, (mod $P_i^{\alpha_i}$)} && \mbox{if $P_i$ is odd and it does not divide $q$},\\
	k/2 \equiv -1 \mbox{\, (mod $P_i^{\alpha_i}$)} && \mbox{if $P_i$ is odd and it divides $q$}.
	\end{eqnarray*}
	
	This gives a unique solution (mod $p/2$) unless $\alpha_1 \ge 4$ and $2^{\alpha_1-1}$ divides $q$, where there are two solutions. Multiplying by $2$ we obtain $k$.
	
	It remains to be shown that there exists a tight regular polyhedron for all such parameters $p$ and $k$. Having chosen $p$, $q$, and $k$, Lemma \ref{l:p-k-restrictions}
	describes the order of the vertices around $F_2$, and Lemma \ref{l:theface} describes the action of $\s_2$ on the vertices (and in particular, it describes the neighbors
	of vertex $1$). We need to show that these choices actually yield a polyhedron. Arguing analogously to Lemma \ref{l:p-k-restrictions}, it can be shown that
	if $x$ and $y$ are two consecutive neighbors of $1$, then the order of the vertices in the face determined by these adjacencies is $(1,y,y-x+1,2y-x,2y-2x+1,3y-2x,\dots,x-y+1,x)$.
	In other words, half of the edges go from a vertex $i$ to $i+y-1$, and half go from a vertex $j$ to $j-x+1$. With the $q$ faces defined that way it is easy to verify that every edge
	belongs to precisely two such faces, as a consequence of the fact that if $x$ is a neighbor of $1$ then so is $2-x$ (by applying the automorphism $\rho_1$).
	It also follows that the order of the faces around neighboring vertices is the same, just reversing the orientation.
	This shows that these $q$ faces suffice and that the diamond condition and strong flag connectivity hold.
	\end{proof}

	We conclude by describing the automorphism groups of the polyhedra we have found.
	We determined earlier that the automorphism group of a tight orientably regular polyhedron with no multiple
	edges is $\Lambda(p,q)_{i,1}$ for some choice of $i$. Labeling the vertices as usual (using some parameter $k$), we have that
	\[1 = 0\sigma_1 = k\sigma_2^{-1} \sigma_1 = k\sigma_1^i \sigma_2^j,\]
	which implies that $k \sigma_1^i = 1\sigma_2^{-j} = 1$. Since also $k \s_1^i = k+i$, it follows that $i = -k+1$.
	
\subsection{Full classification}

	We now return to the discussion of determining all tight orientably regular polyhedra of type $\{p, q\}$. Corollary \ref{c:NoMultEd} implies that all such polyhedra cover tight orientably regular polyhedra with types $\{p,q'\}$ and $\{p',q\}$, with the property that the former and the dual of the latter have no multiple edges. On the other hand, there is only one tight orientably regular polyhedron of type $\{p,q\}$ having such quotients.

\begin{proposition}\label{p:Unicity}
Let $\cal P$ and $\cal Q$ be tight orientably regular polyhedra of type $\{p,q\}$ such that both cover a polyhedron of type $\{p,q'\}$ with no multiple edges, and a polyhedron of type $\{p',q\}$ whose dual has no multiple edges, for some $q'$ dividing $q$ and some $p'$ dividing $p$. Then $\cal P$ and $\cal Q$ are isomorphic.
\end{proposition}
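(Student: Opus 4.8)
The plan is to invoke \tref{first-rels}: since $\calP$ and $\calQ$ are tight orientably regular polyhedra of type $\{p,q\}$, we have $\G(\calP)=\Lambda(p,q)_{i,j}$ and $\G(\calQ)=\Lambda(p,q)_{i',j'}$ for suitable integers, and it suffices to show $i\equiv i'\pmod p$ and $j\equiv j'\pmod q$; then the two quotients of $[p,q]$ coincide, so $\G(\calP)=\G(\calQ)$ with the generators $\rho_0,\rho_1,\rho_2$ matched, and hence $\calP\simeq\calQ$.

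First I would pin down the first parameter using the common cover $\calR_1$ of type $\{p,q'\}$ with no multiple edges. Being a tight orientably regular polyhedron without multiple edges, $\calR_1$ has $\G(\calR_1)=\Lambda(p,q')_{c_1,1}$ for some integer $c_1$ (as recorded at the end of the previous subsection), so $\s_2^{-1}\s_1=\s_1^{c_1}\s_2$ in $\G(\calR_1)$. The cover $\calP\to\calR_1$ induces a surjection $\G(\calP)\to\G(\calR_1)$ sending each $\rho_m$ to $\rho_m$, hence $\s_1\mapsto\s_1$ and $\s_2\mapsto\s_2$. Applying this to the defining relation $\s_2^{-1}\s_1=\s_1^i\s_2^j$ of $\G(\calP)$ and reducing exponents modulo the orders $p$ and $q'$ that $\s_1$ and $\s_2$ have in $\G(\calR_1)$, we may compare $\s_1^i\s_2^j$ with $\s_1^{c_1}\s_2$; since $\calR_1$ is flat, expressions $\s_1^a\rho_1^b\s_2^c$ are essentially unique, forcing $i\equiv c_1\pmod p$ (and, incidentally, $q'\mid j-1$). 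Running the same argument for $\calQ$ gives $i'\equiv c_1\pmod p$, hence $i\equiv i'\pmod p$.

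For the second parameter I would repeat the argument dually. Passing to duals, $\calP^{\delta}$ and $\calQ^{\delta}$ are tight orientably regular polyhedra of type $\{q,p\}$ both covering $\calR_2^{\delta}$, which has type $\{q,p'\}$ and no multiple edges; moreover $\G(\calP^{\delta})=\Lambda(q,p)_{-j,-i}$, obtained from the relation of $\G(\calP)$ via $\s_m\mapsto\s_{3-m}^{-1}$ together with \pref{reverse-rel} (and similarly for $\calQ^{\delta}$), while $\G(\calR_2^{\delta})=\Lambda(q,p')_{c_2,1}$ for some $c_2$. The previous paragraph's argument then yields $-j\equiv c_2\equiv -j'\pmod q$, so $j\equiv j'\pmod q$. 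Combining the two congruences, $\Lambda(p,q)_{i,j}$ and $\Lambda(p,q)_{i',j'}$ are the same quotient of $[p,q]$, so $\calP\simeq\calQ$.

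The exponent manipulations are routine; what needs care is the bookkeeping: that a cover of regular polyhedra really induces a generator-preserving surjection of automorphism groups (so that comparing defining relations is legitimate), and that the dual of $\Lambda(p,q)_{i,j}$ is correctly identified, with type $\{q,p\}$ and the stated relation, so that $\calR_2^{\delta}$ plays precisely the role of $\calR_1$ in the first step. I expect this verification, not any substantive difficulty, to be the main point.
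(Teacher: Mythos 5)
Your proposal is correct and follows essentially the same route as the paper: both identify $\G(\calP)=\Lambda(p,q)_{i,j}$ and $\G(\calQ)=\Lambda(p,q)_{i',j'}$ via \tref{first-rels} and then pin down $i$ modulo $p$ and $j$ modulo $q$ by comparing the defining relation with its image in the two covered polyhedra, whose groups are $\Lambda(p,q')_{c_1,1}$ and $\Lambda(p',q)_{-1,c_2}$. The only cosmetic difference is that you recover $j$ by dualizing and reusing the first step, while the paper compares directly with the relation of $\Lambda(p',q)_{-1,j_2}$ (where $\s_2$ still has order $q$); your extra care with the covering-induced surjection and the uniqueness of the flat normal form is exactly the justification the paper leaves implicit.
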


\begin{proof}
We know that $\Gamma({\cal P}) = \Lambda(p,q)_{i,j}$ and $\Gamma({\cal Q}) = \Lambda(p,q)_{i',j'}$ for some $i, i', j, j'$. We need to show that $i=i'$ and $j=j'$.

Let ${\cal K}_1$ and ${\cal K}_2$ be the polyhedra with types $\{p,q'\}$ and $\{p',q\}$, respectively. Then $\Gamma({\cal K}_1) = \Lambda(p,q')_{i_1,1}$ and $\Gamma({\cal K}_2) = \Lambda(p',q)_{-1,j_2}$. Clearly ${\cal K}_1$ and ${\cal K}_2$ are quotients of $\cal P$ (and of $\cal Q$) by $\langle \sigma_2^{k_1}\rangle$ and by $\langle \sigma_1^{k_2} \rangle$, respectively, for some $k_1$ and $k_2$. The relations $\sigma_2^{-1} \sigma_1 = \sigma_1^i \sigma_2^j$ and $\sigma_2^{-1} \sigma_1 = \sigma_1^{i_1} \sigma_2$ of $\Lambda(p,q)_{i,j}$ and $\Lambda(p,q')_{i_1,1}$ imply that $i_1 = i$; similarly, $j_2 = j$. But the same is true for $i'$ and $j'$ and so $\Gamma({\cal P}) \equiv \Gamma({\cal Q}) \equiv \Lambda(p,q)_{i_1,j_2}$.
\end{proof}

Proposition \ref{p:Unicity} implies that, in order to determine all tight orientably regular polyhedra with type $\{p,q\}$ we only need to determine all quotient maps satisfying the requirements of Corollary \ref{c:NoMultEd}.

First, for each $q'$ dividing $q$, we determine the values of $i$ such that $\Lambda(p,q')_{i,1}$ is the automorphism group of a tight orientably regular polyhedron of type $\{p, q'\}$ with no multiple edges. Then, for each $p'$ dividing $p$, we determine the values of $j$ such that $\Lambda(p', q)_{-1,j}$ is the automorphism group of a tight orientably regular polyhedron whose dual has no multiple edges. Finally, we determine which pairs of these polyhedra satisfy the conditions of \pref{tight-mix} that $p'$ divides $i+1$ and $q'$ divides $j-1$.

	Let us illustrate the procedure by determining all tight orientably regular polyhedra of type $\{48, 32\}$.
	Proposition \ref{p:4items} implies that all possible values of $q'$ (that yield a tight orientably regular polyhedron of type $\{48, q'\}$ with no multiple edges)
	are $2$, $4$ or $8$. Solving the congruences in the proof of Theorem \ref{th:RegularTight} we find that $k = -i+1$ is $2$ when $q' = 2$, is $26$ when $q'=4$,
	and is $14$ or $38$ if $q'=8$. Since $k = -i + 1$, this gives us that the values of $i$ are $-1$, $23$, $35$ and $11$, respectively.
	
	To find the tight orientably regular polyhedra of type $\{p', 32\}$ such that $\langle \s_1 \rangle$ is core-free, we will work with the dual polyhedron of type $\{32, p'\}$.
	Then Proposition \ref{p:4items} implies that all possible values of $p'$ are $2$, $4$ or $16$. Solving the congruences in the proof of Theorem \ref{th:RegularTight}
	we find that $k$ is $2$ when $p' = 2$, is $18$ when $p' = 4$, and is $14$ or $30$ if $p' = 16$. Therefore the values of $i$ are $-1$, $15$, $19$ and $3$, respectively.
	To return to the dual polyhedron of type $\{p', 32\}$, we note that the relation $\s_2^{-1} \s_1 = \s_1^i \s_2^j$ in $\G(\calP)$ yields the relation
	$\s_1 \s_2^{-1} = \s_2^{-i} \s_1^{-j}$ in $\G(\calP^{\delta})$, and by \pref{reverse-rel}, this is equivalent to the relation $\s_2^{-1} \s_1 =
	\s_1^{-j} \s_2^{-i}$. So when considering the dual polyhedron with type $\{p',32\}$, we must substitute $i$ by $-j$. This gives that the values of $j$ are $1$, $17$, $13$ and $29$, respectively.

	Examining all of the possible values for $p'$, $i$, $q'$, and $j$ yields the following 10 groups whose parameters
	satisfy the conditions of \pref{tight-mix}:
	\begin{align*}
	\Lambda(48, 32)_{-1,1} && \Lambda(48, 32)_{-1,13} \\
	\Lambda(48, 32)_{-1,17} && \Lambda(48, 32)_{-1,29} \\
	\Lambda(48, 32)_{11,1} && \Lambda(48, 32)_{11,17} \\
	\Lambda(48, 32)_{23,1} && \Lambda(48, 32)_{23,17} \\
	\Lambda(48, 32)_{35,1} && \Lambda(48, 32)_{35,17} \\
	\end{align*}	

Note that it is always possible to pick $i = -1$ and $j = 1$; indeed, this is the group named $\G(p, q)$ in \cite{tight-polytopes}, and it is the group of the polyhedron $\{p, q \mid 2\}$ (see \cite[p. 196]{arp}).
It is clear that if there are many tight orientably regular polytopes of type $\{p,q\}$ then the factorizations of $p$ and $q$ in primes have several factors in common. On the other hand, whenever $p$ and $q$ are relatively
prime, the only tight orientably regular polytope of type $\{2p,2q\}$ is $\{2p, 2q \mid 2\}$ with group $\Lambda(2p, 2q)_{-1, 1}$. 

\section{Tight non-orientably regular polyhedra}\label{s:nonorientable}
We now consider the classification of tight, non-orientably regular polyhedra.

	In \tref{second-rel}, we saw that every tight non-orientably regular polyhedron $\calP$ of type $\{p, q\}$ has
	automorphism group $\G(\calP) = \Delta(p,q)_{(i,j,a,b)}$ or its dual, where $\Delta(p,q)_{(i,j,a,b)}$ is the quotient
	of $[p, q]$ by the extra relations $\s_2^{-1} \s_1 = \s_1^i \rho_1 \s_2^j$ and $\s_2^{-2} \s_1 = \s_1^a \s_2^b$.
	It remains to determine which such groups actually appear as the automorphism group of a tight
	non-orientably regular polyhedron. First, we note the following:
				
	\begin{proposition}
	\label{prop:nonor-no-2}
	Let $\calP$ be a non-orientably regular polyhedron of type $\{p, q\}$, with automorphism group $\G(\calP) = \langle \s_1, \rho_1, \s_2 \rangle$.
	Then neither $\langle \s_1^2 \rangle$ nor $\langle \s_2^2 \rangle$ is normal.
	\end{proposition}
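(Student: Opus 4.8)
The plan is to argue by contradiction, reducing everything to the fact that a regular polyhedron of type $\{2,q'\}$ is always orientably regular. First I would observe that it suffices to prove $\langle\s_1^2\rangle$ is not normal: the dual $\calP^{\delta}$ is again non-orientably regular (it sits on the same surface as $\calP$; equivalently $\G(\calP^{\delta})\cong\G(\calP)$ under the relabelling of generators fixing $\rho_1$ and swapping $\rho_0\leftrightarrow\rho_2$, which preserves relator lengths), and that relabelling carries $\s_1$ to $\s_2^{-1}$, so normality of $\langle\s_2^2\rangle$ in $\G(\calP)$ is equivalent to normality of $\langle\s_1^2\rangle$ in $\G(\calP^{\delta})$.

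So assume $\langle\s_1^2\rangle=\langle(\rho_0\rho_1)^2\rangle\lhd\G:=\G(\calP)$. Since $\G$ is a string C-group, \pref{simple-quo} (with $k=2$) shows $\overline{\G}:=\G/\langle\s_1^2\rangle$ is a string C-group; let $\overline{\calP}$ be the regular polyhedron it defines and $\overline{\rho_i}$ the images of the generators. The canonical projection sends each $\rho_i$ to $\overline{\rho_i}$ and therefore realises $\calP$ as a cover of $\overline{\calP}$, so \pref{quos-nonor} forces $\overline{\calP}$ to be non-orientably regular. But $\overline{\s_1}=\overline{\rho_0}\,\overline{\rho_1}$ has order dividing $2$, while $\overline{\rho_0}\neq 1$ (the reflection $\rho_0$ does not lie in the cyclic rotation subgroup $\langle\s_1\rangle\supseteq\langle\s_1^2\rangle$ of index $2$ in the dihedral group $\langle\rho_0,\rho_1\rangle$ of order $2p$, as $p\geq 2$); hence $\overline{\calP}$ has type $\{2,q'\}$ for some $q'\geq 2$.

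To finish I would show that $\overline{\calP}$ is actually orientably regular, which is the contradiction. From $(\overline{\rho_0}\,\overline{\rho_1})^2=1$ and $(\overline{\rho_0}\,\overline{\rho_2})^2=1$ the involution $\overline{\rho_0}$ commutes with both other generators and so is central; together with the intersection condition $\langle\overline{\rho_0}\rangle\cap\langle\overline{\rho_1},\overline{\rho_2}\rangle=\{1\}$ this gives $\overline{\G}=\langle\overline{\rho_0}\rangle\times\langle\overline{\rho_1},\overline{\rho_2}\rangle$, of order $4q'$. A short computation using centrality of $\overline{\rho_0}$ yields $\overline{\s_1}\,\overline{\s_2}\,\overline{\s_1}=\overline{\s_2}^{-1}$, so $\overline{\G}^{+}=\langle\overline{\s_1},\overline{\s_2}\rangle=\langle\overline{\s_2}\rangle\langle\overline{\s_1}\rangle$ has order at most $2q'$; since $\overline{\G}^{+}$ has index at most $2$ in $\overline{\G}$ in any case, the index is exactly $2$ and $\overline{\calP}$ is orientably regular. (Alternatively one can replace this paragraph by the classical fact that the only regular polyhedron of type $\{2,q'\}$ is the spherical one, which is orientable.) The step that needs the most care is the reduction in the second paragraph: one must make sure $\overline{\calP}$ is a genuine polyhedron of type $\{2,q'\}$ with $q'\geq 2$, so that the centrality argument is not vacuous; this is precisely where the hypotheses $p\geq 2$ and $q\geq 2$ are used, and everything else is routine.
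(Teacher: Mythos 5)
Your proof is correct and follows essentially the same route as the paper's: assume $\langle\s_1^2\rangle$ is normal, use \pref{simple-quo} to pass to the quotient polyhedron of type $\{2,q'\}$, invoke \pref{quos-nonor} to conclude it is non-orientably regular, and derive a contradiction from the fact that every polyhedron of type $\{2,q'\}$ is orientably regular. The only differences are cosmetic: you make the duality reduction explicit where the paper says ``without loss of generality,'' and you prove the orientability of the type-$\{2,q'\}$ quotient directly instead of citing the classical classification.
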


	\begin{proof}
	Without loss of generality, let $N = \langle \s_1^2 \rangle$ and suppose that $N$ is normal in $\G(\calP)$.
	Then by \pref{simple-quo}, $\G(\calP) / N$ is a string C-group, and therefore it the automorphism group of a polyhedron $\calQ$
	of type $\{2, q\}$. \pref{quos-nonor} says that since $\calP$ is non-orientably regular, so is $\calQ$. But there is only
	a single polyhedron of type $\{2, q\}$, and it is orientably regular. Thus, $N$ cannot be normal after all.
	\end{proof}
		
	We now work to find restrictions on the parameters $(i, j, a, b)$. We start with several technical lemmas.

	\begin{lemma}
	\label{lem:b-equiv-2}
	Let $\calP$ be a tight non-orientably regular polyhedron of type $\{p, q\}$ with $\G(\calP) = \D(p,q)_{(i,j,a,b)}$.
	If $q$ is odd, then $b = 2$, and if $q$ is even, then $b = 2$ or $2 + q/2$.
	\end{lemma}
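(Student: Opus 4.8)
The plan is to exploit the normal subgroups produced by Lemma~\ref{lem:a-and-b-subgps}. Working in $\G(\calP) = \D(p,q)_{(i,j,a,b)}$, that lemma tells us $\langle \s_2^{b-2} \rangle$ is normal. I would combine this with \pref{nonor-no-2}, which forbids $\langle \s_2^2 \rangle$ from being normal, to squeeze the possible values of $b$.

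\textbf{Key steps.} First I would recall that since $\s_2$ has order exactly $q$, the normal subgroup $\langle \s_2^{b-2} \rangle$ equals $\langle \s_2^d \rangle$ where $d = \gcd(b-2, q)$. The strategy is to show that $d$ cannot be too small. Second, suppose toward a contradiction that $d < q$ with $d \neq q/2$; then $\langle \s_2^d \rangle$ is a nontrivial proper normal subgroup. I would take the quotient $\G(\calP)/\langle \s_2^d \rangle$, which by \pref{simple-quo} is a string C-group, hence the automorphism group of a polyhedron $\calQ$ of type $\{p, d\}$ (or $\{p, d'\}$ for $d'$ a divisor of $d$), and by \pref{quos-nonor} $\calQ$ is still non-orientably regular. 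Third, inside this quotient I would argue that $\langle \overline{\s_2}^2 \rangle$ becomes normal — because $\overline{\s_2}$ has small order $d$, and by a divisibility/parity argument $\langle \overline{\s_2}^2 \rangle$ is forced to coincide with the whole (now small cyclic) group $\langle \overline{\s_2} \rangle$ or with a subgroup that is visibly normal in the tight group — contradicting \pref{nonor-no-2} applied to $\calQ$. The cleanest version: if $q$ is odd and $d < q$, then $d$ is odd, so $2$ is invertible mod $d$, hence $\langle \overline{\s_2}^2 \rangle = \langle \overline{\s_2} \rangle$, which is normal in the tight quotient (it is the stabilizer of the base vertex), so $\langle \overline{\s_2}^2 \rangle$ is normal, contradicting \pref{nonor-no-2}; therefore $d = q$, i.e. $q \mid b-2$, and since $b$ is only defined mod $q$ we get $b = 2$. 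If $q$ is even, the same argument rules out every $d$ except $d = q$ and $d = q/2$ (the one case where $\langle \overline{\s_2}^2 \rangle$ need not be all of $\langle \overline{\s_2} \rangle$), giving $b \equiv 2$ or $b \equiv 2 + q/2 \pmod q$.

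\textbf{Main obstacle.} The delicate point is Step three: carefully checking that in the quotient polyhedron $\calQ$ the subgroup $\langle \overline{\s_2}^2 \rangle$ really is normal (so that \pref{nonor-no-2} bites), and handling the bookkeeping when the order of $\overline{\s_2}$ drops below $d$ because the quotient collapses further. I would need the fact that in a tight regular polyhedron $\langle \s_2 \rangle$ is the vertex-stabilizer and hence $\langle \s_2^m\rangle$ is normal whenever it is the full vertex-stabilizer's unique subgroup of its order — more simply, when $\overline{\s_2}$ has odd order, $\langle\overline{\s_2}^2\rangle = \langle\overline{\s_2}\rangle$ outright, and $\langle\overline{\s_2}\rangle$ being the vertex stabilizer is only normal if it is the whole group, which would force type $\{p,1\}$, absurd — so actually the contradiction should be extracted by noting $\langle\overline{\s_2}^2\rangle$ normal with $\overline{\s_2}$ of order $\geq 3$ contradicts \pref{nonor-no-2} directly. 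Isolating exactly which residues survive — and confirming that $b = 2+q/2$ is genuinely not excluded (rather than being excludable by a sharper argument) — is where the care is needed; the remaining divisor-chasing is routine.
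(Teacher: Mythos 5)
There is a genuine gap. Your argument uses only the \emph{normality} of $\langle \s_2^{b-2}\rangle$, but normality alone cannot force $\gcd(b-2,q)\in\{q,\,q/2\}$. Indeed, \lref{s14-s26-normal} (proved later in the paper, but a true fact about these very groups) shows that $\langle\s_2^{6}\rangle$ is always normal in $\G(\calP)$; for a tight non-orientably regular polyhedron of type $\{12,18\}$, say, this is a normal subgroup $\langle\s_2^{d}\rangle$ with $d=6\notin\{q,\,q/2\}=\{18,9\}$ (and for type $\{4,9\}$ one gets the normal subgroup $\langle\s_2^{3}\rangle$ with $3\neq 9$). So the inference your strategy rests on --- ``$\langle\s_2^{d}\rangle$ normal forces $d=q$ or $d=q/2$'' --- is simply false for these polyhedra, and no amount of divisor-chasing will recover it. The mechanism you propose for extracting a contradiction also fails at the decisive step: in the quotient of type $\{p,d\}$ with $d$ odd you correctly get $\langle\overline{\s_2}^{2}\rangle=\langle\overline{\s_2}\rangle$, but you then need this subgroup to be \emph{normal} in order for \pref{nonor-no-2} to bite, and $\langle\overline{\s_2}\rangle$ is the base-vertex stabilizer, which is not normal. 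Your ``main obstacle'' paragraph notices exactly this but does not repair it: \pref{nonor-no-2} asserts non-normality of $\langle\s_2^2\rangle$, so it can only produce a contradiction after you have \emph{proved} that $\langle\overline{\s_2}^{2}\rangle$ is normal, which you never do.

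The missing idea is that \lref{a-and-b-subgps} gives strictly more than normality: it gives the commutation $\s_1\s_2^{b-2}=\s_2^{b-2}\s_1$. The paper's proof pushes $\s_2^{b-2}$ through the element $\s_2^{-1}\s_1$ in two ways --- once using this commutation together with the fact that $\s_2^{b-2}$ commutes with $\s_2^{-1}$ (so it passes through unchanged), and once after rewriting $\s_2^{-1}\s_1=\s_1^{i}\rho_1\s_2^{j}$, where the $\rho_1$ inverts it and turns $\s_2^{b-2}$ into $\s_2^{2-b}$. Comparing the two computations yields the element identity $\s_2^{b-2}=\s_2^{2-b}$, hence $2b\equiv 4\pmod q$, which is exactly the dichotomy claimed. \pref{nonor-no-2} plays no role in this lemma; it only enters later, in \lref{a-and-j-restrictions}.
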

	
	\begin{proof}
	\lref{a-and-b-subgps} established that $\s_1 \s_2^{b-2} = \s_2^{b-2} \s_1$. Therefore,
	\[ \s_2^{b-2} (\s_2^{-1} \s_1) = (\s_2^{-1} \s_1) \s_2^{b-2}. \]
	On the other hand,
	\begin{align*}
	\s_2^{b-2} (\s_2^{-1} \s_1) &= \s_2^{b-2} (\s_1^i \rho_1 \s_2^j) \\
	&= (\s_1^i \rho_1 \s_2^j) \s_2^{2-b} \\
	&= (\s_2^{-1} \s_1) \s_2^{2-b}.
	\end{align*}
	It follows that $\s_2^{b-2} = \s_2^{2-b}$, and thus $b-2 \equiv 2-b$ (mod $q$).
	Therefore, $2b \equiv 4$ (mod $q$), and the result then follows.
	\end{proof}

	\begin{lemma}
	\label{lem:a-and-j-restrictions}
	Let $\calP$ be a tight non-orientably regular polyhedron of type $\{p, q\}$ with $\G(\calP) = \D(p,q)_{(i,j,a,b)}$. Then
	\begin{enumerate}
	\item $a = 1 + p/2$
	\item If $p = 4$, then $j = 1$ or $j = 1 + q/2$. If $p \neq 4$, then $j = 1 + q/2$.
	\end{enumerate}
	\end{lemma}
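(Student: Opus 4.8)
The plan is to exploit the normal subgroups produced by Proposition~\ref{prop:i-and-j-subgps}(b) and Lemma~\ref{lem:a-and-b-subgps}, together with the obstruction in Proposition~\ref{prop:nonor-no-2}. Recall that from $\D(p,q)_{(i,j,a,b)}$ we have four ``automatic'' normal subgroups generated by powers of $\s_1$ and $\s_2$, namely $\langle \s_1^{i-2}\rangle$, $\langle \s_2^{j+2}\rangle$, $\langle \s_1^{i-3-a}\rangle$, and $\langle \s_2^{b-2}\rangle$. Since $\langle \s_1 \rangle$ has order $p$ and (by Proposition~\ref{prop:nonor-no-2}) $\langle \s_1^2 \rangle$ is \emph{not} normal, each normal subgroup of the form $\langle \s_1^m \rangle$ must have $\gcd(m,p) \in \{1,p\}$ forced away from giving $\langle\s_1^2\rangle$; more precisely, $\langle \s_1^m\rangle$ being normal with $\gcd(m,p)=d$ means $\langle \s_1^d\rangle$ is normal, so $d \neq 2$. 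The same holds on the $\s_2$ side.

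**Proving (a): $a = 1 + p/2$.** First I would pin down $\langle \s_1^{i-3-a}\rangle$. Combining the two hypotheses as in Lemma~\ref{lem:a-and-b-subgps}, conjugation by $\s_2$ inverts $\s_1^{i-3-a}$. I would also observe that $\s_1^{i-2}$ is inverted by $\rho_0,\rho_1$ and by $\s_2$ (Proposition~\ref{prop:i-and-j-subgps}(b)), hence $\s_1^{i-2}$ generates a normal subgroup of $\D(p,q)$; likewise $\s_1^{i-3-a}$. So $\langle \s_1^{\gcd(i-2,\,i-3-a,\,p)}\rangle$ is normal, and $\gcd(i-2,i-3-a) \mid (a+1)$. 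The key point: because $\langle\s_1^2\rangle$ is not normal, neither $\gcd(i-2,p)$ nor $\gcd(i-3-a,p)$ nor their combination can equal $2$, and in fact tightness forces $\s_1$ itself (order exactly $p$) to satisfy strong constraints. The cleanest route is probably to compute directly how $\rho_2$ acts on $\s_1$. From $\s_2^{-1}\s_1 = \s_1^i\rho_1\s_2^j$ and $\s_2 = \rho_1\rho_2$ one gets an explicit formula for $\rho_2 \s_1 \rho_2$; iterating and using that $\rho_2^2=1$ gives $\s_1 = \rho_2(\rho_2\s_1\rho_2)\rho_2$, which yields a congruence modulo $p$ whose only solution (given that $p \neq 2$ and the non-normality of $\langle\s_1^2\rangle$) is $2a \equiv 2 + p \pmod{p}$, i.e.\ $a \equiv 1 + p/2$. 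I expect this to pin $a$ uniquely.

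**Proving (b): the constraint on $j$.** Here the relevant normal subgroups are $\langle \s_2^{j+2}\rangle$ (from Proposition~\ref{prop:i-and-j-subgps}(b)) and $\langle \s_2^{b-2}\rangle$ (from Lemma~\ref{lem:a-and-b-subgps}). By Lemma~\ref{lem:b-equiv-2}, $b-2 \equiv 0$ or $q/2 \pmod q$, so $\langle \s_2^{b-2}\rangle$ is either trivial or $\langle \s_2^{q/2}\rangle$. Using $a = 1 + p/2$ from part~(a), I would rewrite the relation $\s_2^{-1}\s_1^2 = \s_1^{i-a}\rho_1\s_2^{b-j-2}$ and then examine the action of $\rho_0$ (which inverts $\s_2^{j+2}$ and commutes with... — careful, $\rho_0$ commutes with $\s_2^{j+2}$ by Proposition~\ref{prop:i-and-j-subgps}(b)) to extract a congruence on $j$. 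The expected conclusion: $\langle \s_2^{2(j-1)}\rangle$ or similar is normal, and since $\langle \s_2^2\rangle$ is not normal this forces $\gcd(2(j-1),q)$ to be $q$ (not $2$), hence $j \equiv 1 \pmod{q/2}$, i.e.\ $j = 1$ or $j = 1 + q/2$. Then the exceptional case $p = 4$: when $p = 4$ we have $a = 1 + p/2 = 3 = -1 \pmod 4$, so $\s_1^a = \s_1^{-1}$, and the relation $\s_2^{-2}\s_1 = \s_1^{-1}\s_2^b$ degenerates in a way that decouples $j$; for $p \neq 4$ one gets an extra relation from the interplay of $\s_1^{i-2}$ and $\s_1^{i-3-a}$ being normal (their gcd with $p$ must be $4$, not $2$, forcing $i$ and hence $j$) that rules out $j = 1$, leaving only $j = 1 + q/2$.

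**Main obstacle.** The delicate part is part~(b), specifically separating the $p = 4$ case from $p \neq 4$: I need to show that when $p \neq 4$, the relation $\s_2^{-1}\s_1 = \s_1^i\rho_1\s_2^j$ with $j = 1$ forces $\langle \s_2^2 \rangle$ (or $\langle\s_1^2\rangle$) to become normal, contradicting Proposition~\ref{prop:nonor-no-2}. This requires carefully tracking the normal closure generated by the various automatic normal subgroups and showing it collapses $\s_1$ or $\s_2$ to order dividing $4$ unless $j$ is shifted by $q/2$ — essentially a careful bookkeeping argument with the conjugation formulas, using $a = 1+p/2$ crucially. The $p = 4$ escape hatch is exactly where $a = -1 \pmod p$ makes $\s_1^{i-2}$ and $\s_1^{i-3-a}$ conspire harmlessly.
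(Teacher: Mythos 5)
Your proposal assembles the right ingredients (the automatic normal subgroups from \pref{i-and-j-subgps}(b) and \lref{a-and-b-subgps}, the obstruction \pref{nonor-no-2}, and \lref{b-equiv-2}), but at each of the three decisive points the argument is either left as a gesture or rests on a step that fails as stated. For (a): the congruence you can actually extract is $2a \equiv 2 \pmod p$ (the paper gets it by computing $\s_2^{-2}\s_1$ two ways, obtaining $\s_1^{2a-2}=\s_2^{2j-2b+2}$, and invoking $\langle\s_1\rangle\cap\langle\s_2\rangle=\{1\}$). Writing this as ``$2a\equiv 2+p \pmod p$, i.e.\ $a\equiv 1+p/2$'' is a non-sequitur: $2a\equiv 2+p$ and $2a\equiv 2$ are the same congruence mod $p$, with the two solutions $a=1$ and $a=1+p/2$. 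Excluding $a=1$ is the actual content, and you never say how. (The paper does it by noting that $a=1$ makes $\s_2^b$ a conjugate of $\s_2^{-2}$, which together with \lref{b-equiv-2} forces $\langle\s_2^2\rangle$ to be normal, contradicting \pref{nonor-no-2} --- the contradiction lives on the $\s_2$ side, not the $\s_1$ side you invoke. Your alternative, that $\gcd(i-2,\,i-3-a)$ divides $a+1=2$ so the normal subgroups $\langle\s_1^{i-2}\rangle$ and $\langle\s_1^{i-3-a}\rangle$ would force $\langle\s_1^2\rangle$ to be normal, could be made to work, but it is not developed.)

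For (b), two problems. The inference ``$\langle\s_2^{2(j-1)}\rangle$ is normal and $\langle\s_2^2\rangle$ is not, hence $\gcd(2(j-1),q)=q$'' is false: non-normality of $\langle\s_2^2\rangle$ only rules out $\gcd(2(j-1),q)\in\{1,2\}$ and says nothing about, say, $\gcd=3$ or $4$. What is needed is that $\s_2^{2j-2b+2}$ is the \emph{identity} (again from the intersection condition applied to $\s_1^{2a-2}=\s_2^{2j-2b+2}$), which with $2b\equiv 4\pmod q$ from \lref{b-equiv-2} gives $2(j-1)\equiv 0\pmod q$ exactly, hence $j=1$ or $j=1+q/2$. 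More seriously, the dichotomy between $p=4$ and $p\neq 4$ is the heart of the lemma, and your proposal explicitly defers it (``careful bookkeeping\dots forcing $i$ and hence $j$'') without supplying the idea. The paper's argument is concrete: if $j=1$ then $\langle\s_2^{j+2}\rangle=\langle\s_2^3\rangle$ is normal; in the quotient (still a string C-group by \pref{simple-quo}, with $\s_1$ still of order $p$) one has $\s_2^{-2}=\s_2$ and $(\s_1\s_2)^2=1$, whence $\s_1^{1+p/2}\s_2^b=\s_1^{-1}\s_2^{-1}$, so $\s_1^{2+p/2}\in\langle\s_2\rangle$ and the intersection condition forces $2+p/2\equiv 0\pmod p$, i.e.\ $p=4$. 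Nothing in your sketch produces a relation of this kind, so as written the proof of (b) is incomplete.
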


	\begin{proof}
	In $\D(p,q)_{(i,j,a,b)}$, the relations $\s_2^{-1} \s_1 = \s_1^i \rho_1 \s_2^j$ and $\s_2^{-2} \s_1 = \s_1^a \s_2^b$ both hold.
	Furthermore, \lref{a-and-b-subgps} says that $\s_2^{-1} \s_1^2 = \s_1^{i-a} \rho_1 \s_2^{b-j-2}$. Using these relations and the fact (from
	\pref{i-and-j-subgps} (b)) that conjugation by $\s_2$ inverts $\s_1^{i-2}$, we get that
	\begin{align*}
	\s_1^a \s_2^b &= \s_2^{-2} \s_1 \\
	&= \s_2^{-1} \s_1^i \rho_1 \s_2^j \\
	&= \s_2^{-1} \s_1^{i-2} \s_1^2 \rho_1 \s_2^j \\
	&= \s_1^{2-i} \s_2^{-1} \s_1^2 \rho_1 \s_2^j \\
	&= \s_1^{2-i} \s_1^{i-a} \rho_1 \s_2^{b-j-2} \rho_1 \s_2^j \\
	&= \s_1^{2-a} \s_2^{2j-b+2}.
	\end{align*}	
	Thus we see that $\s_1^{2a-2} = \s_2^{2j-2b+2}$. Since $\calP$ is a polyhedron, $\langle \s_1 \rangle \cap \langle \s_2 \rangle = \{ \eps \}$,
	and it follows that $2a - 2 \equiv 0$ (mod $p$) and that $(2j-2b+2) \equiv 0$ (mod $q$).

	Now, without loss of generality, $0 \leq a \leq p-1$. So, since $2a \equiv 2$ (mod $p$), it follows that either
	$a = 1$ or $a = 1 + p/2$. If $a = 1$, then we have that
	\[ \s_2^{-2} \s_1 = \s_1 \s_2^b, \]
and $\s_2^b$ is a conjugate of $\s_2^{-2}$.
Now, if $q$ is odd, then $b = 2$ (by \lref{b-equiv-2}), and it follows that $\langle \s_2^2 \rangle$ is normal.
On the other hand, if $q$ is even, then $\s_2^2$ has order $q/2$, and so does $\s_2^b$.
This implies that $b$ is even and again $\langle \s_2^2 \rangle$ is normal. But by \pref{nonor-no-2}, that cannot
	happen. Therefore, it must be that $a = 1 + p/2$.

	Similarly, we have that $2b \equiv 2j+2$ (mod $q$), and by \lref{b-equiv-2}, $2b \equiv 4$ (mod $q$). Therefore,
	$2j+2 \equiv 4$ (mod $q$), and so $2j \equiv 2$ (mod $q$). Thus, either $j = 1$ or $j = 1 + q/2$. Now,
	if $j = 1$, then \pref{i-and-j-subgps} (b) says that $N = \langle \s_2^3 \rangle$ is normal. In the quotient
	of $\G(\calP)$ by $N$, the order of $\s_1$ is still $p$, and we have that
	\begin{align*}
	\s_1^{1+p/2} \s_2^b &= \s_2^{-2} \s_1 \\
	&= \s_2 \s_1 \\
	&= \s_1^{-1} \s_2^{-1}.
	\end{align*}
	So $\s_1^{2+p/2} = \s_2^{-b-1}$. Since $\G(\calP)$ is a string C-group, \pref{simple-quo} implies that
	$\G(\calP) / N$ is a string C-group as well, and so again $\langle \s_1 \rangle \cap \langle \s_2 \rangle = \{ \eps \}$.
	So $2 + p/2 \equiv 0$ (mod $p$), from which it follows that $p = 4$. So if $p \neq 4$,
	then $j = 1 + q/2$.
	\end{proof}

	\begin{lemma}
	\label{lem:s14-s26-normal}
	Let $\calP$ be a tight non-orientably regular polyhedron of type $\{p, q\}$ with $\G(\calP) = \D(p,q)_{(i,j,a,b)}$. Then
	the subgroups $\langle \s_1^4 \rangle$ and $\langle \s_2^6 \rangle$ are normal.
	\end{lemma}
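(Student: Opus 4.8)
The plan is to derive both claims from a single elementary principle: a subgroup of a \emph{normal cyclic} subgroup of $\G(\calP)$ is itself normal in $\G(\calP)$ (inside a cyclic group it is the unique subgroup of its order, hence characteristic, and conjugation by any element of $\G(\calP)$ restricts to an automorphism of the normal subgroup). So it suffices to exhibit a normal subgroup of the form $\langle \s_1^d \rangle$ with $d \mid 4$ and one of the form $\langle \s_2^e \rangle$ with $e \mid 6$.

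For $\langle \s_1^4 \rangle$: \pref{i-and-j-subgps} (b) gives that $\langle \s_1^{i-2} \rangle$ is normal in $\G(\calP)$, and \lref{a-and-b-subgps} gives that $\langle \s_1^{i-3-a} \rangle$ is normal. Since $\langle \s_1 \rangle$ is cyclic of order $p$, the subgroup these two generate is $\langle \s_1^d \rangle$ with $d = \gcd(i-2, i-3-a, p)$, and it is normal (a product of normal subgroups is normal). Now $d$ divides $(i-2)-(i-3-a) = 1+a$, and by \lref{a-and-j-restrictions} we have $a = 1 + p/2$, so $2(1+a) = 4 + p \equiv 4 \pmod p$; since $d \mid p$ and $d \mid 1+a$, this forces $4 \equiv 0 \pmod d$, that is, $d \mid 4$. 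Hence $\langle \s_1^4 \rangle \le \langle \s_1^d \rangle$, and by the principle above $\langle \s_1^4 \rangle$ is normal in $\G(\calP)$.

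For $\langle \s_2^6 \rangle$: dually, \pref{i-and-j-subgps} (b) gives that $\langle \s_2^{j+2} \rangle$ is normal and \lref{a-and-b-subgps} gives that $\langle \s_2^{b-2} \rangle$ is normal, so the subgroup they generate, $\langle \s_2^e \rangle$ with $e = \gcd(j+2, b-2, q)$, is normal. Then $e$ divides $(j+2)-(b-2) = j-b+4$. By \lref{a-and-j-restrictions}, $j \equiv 1$ or $j \equiv 1+q/2$ modulo $q$, so $2j \equiv 2 \pmod q$; by \lref{b-equiv-2}, $2b \equiv 4 \pmod q$. Therefore $2(j-b+4) = (2j-2) - (2b-4) + 6 \equiv 6 \pmod q$, and since $e \mid q$ and $e \mid j-b+4$ we get $6 \equiv 0 \pmod e$, that is, $e \mid 6$. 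Again $\langle \s_2^6 \rangle \le \langle \s_2^e \rangle$, so $\langle \s_2^6 \rangle$ is normal in $\G(\calP)$.

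I do not expect a genuine obstacle: the only nontrivial inputs are the parameter constraints $a = 1+p/2$, $2j \equiv 2$, and $2b \equiv 4$ established in the preceding lemmas, together with the purely group-theoretic facts about cyclic groups, none of which uses the string C-group property. The one point to state carefully is that the two divisibility computations are independent of whether any of the exponents $i-2$, $i-3-a$, $j+2$, $b-2$ happens to be divisible by $p$ or $q$ (so that the corresponding subgroup is trivial): taking the gcd with $p$ or $q$ simply absorbs that entry, and the identities $d \mid 4$ and $e \mid 6$ go through verbatim.
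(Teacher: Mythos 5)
Your proof is correct and follows essentially the same route as the paper: both arguments combine the normal subgroups $\langle\s_1^{i-2}\rangle$ and $\langle\s_1^{i-3-a}\rangle$ to reach $\s_1^{1+a}$ and then double, using $a=1+p/2$, to land on $\s_1^{4+p}=\s_1^4$, and both reduce the $\langle\s_2^6\rangle$ claim to the congruence $2j\equiv 2\pmod q$. The only cosmetic differences are that the paper tracks the explicit ``inverted by conjugation with $\s_2$'' property rather than your characteristic-subgroup/gcd packaging, and for $\langle\s_2^6\rangle$ it gets by with $\langle\s_2^{j+2}\rangle$ alone via $2(j+2)\equiv 6\pmod q$, without needing $\langle\s_2^{b-2}\rangle$.
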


	\begin{proof}
	\pref{i-and-j-subgps} (b) says that conjugation by $\s_2$ inverts $\s_1^{i-2}$, and \lref{a-and-b-subgps} says that conjugation by $\s_2$
	inverts $\s_1^{i-3-a}$. It follows that $\s_2$ inverts $\s_1^{a+1}$ and therefore, it also inverts $\s_1^{2a+2}$.
	Since $a = 1 + p/2$ (by \lref{a-and-j-restrictions}), it follows that conjugation by $\s_2$ inverts $\s_1^4$, and so
	$\langle \s_1^4 \rangle$ is normal.
	
	For the other claim, \pref{i-and-j-subgps} (b) says that $\langle \s_1^{j+2} \rangle$ is normal, and so $\langle \s_1^{2j+4} \rangle$
	is also normal. By \lref{a-and-j-restrictions}, $j = 1$ or $j = 1 + q/2$. In any case, $2j+4 \equiv 6$ (mod $q$), and so
	$\langle \s_2^6 \rangle$ is normal.
	\end{proof}

	\begin{corollary}
	\label{cor:p-and-q-divisors}
	Let $\calP$ be a tight non-orientably regular polyhedron of type $\{p, q\}$ with $\G(\calP) = \D(p,q)_{(i,j,a,b)}$. Then
	$p$ is divisible by $4$ and $q$ is divisible by $3$.
	\end{corollary}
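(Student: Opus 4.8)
The plan is to obtain both divisibility statements as immediate consequences of \lref{s14-s26-normal} (which gives that $\langle \s_1^4 \rangle$ and $\langle \s_2^6 \rangle$ are normal in $\G(\calP)$) together with \pref{nonor-no-2} (which gives that $\langle \s_1^2 \rangle$ and $\langle \s_2^2 \rangle$ are \emph{not} normal). No genuinely new computation is needed; the point is simply to combine these two facts correctly.

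The first step I would carry out is a small group-theoretic observation. Suppose $\s_1^2 \in \langle \s_1^4 \rangle$. Then $\langle \s_1^2 \rangle \subseteq \langle \s_1^4 \rangle$, and since conversely $\s_1^4 = (\s_1^2)^2 \in \langle \s_1^2 \rangle$, we would get $\langle \s_1^2 \rangle = \langle \s_1^4 \rangle$. But $\langle \s_1^4 \rangle$ is normal in $\G(\calP)$ while $\langle \s_1^2 \rangle$ is not, a contradiction. Hence $\s_1^2 \notin \langle \s_1^4 \rangle$. The identical argument, with $\s_2$ in place of $\s_1$ and $6$ in place of $4$ (using $\s_2^6 = (\s_2^2)^3$), shows $\s_2^2 \notin \langle \s_2^6 \rangle$.

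The second step is to convert these two non-membership statements into divisibility conditions on $p$ and $q$. Since $\s_1$ has order $p$, the cyclic subgroup $\langle \s_1^4 \rangle$ equals $\langle \s_1^{\gcd(4,p)} \rangle$, and $\s_1^2$ belongs to it exactly when $\gcd(4,p)$ divides $2$. As $\gcd(4,p) \in \{1,2,4\}$, the condition $\s_1^2 \notin \langle \s_1^4 \rangle$ forces $\gcd(4,p) = 4$, i.e.\ $4 \mid p$. In the same way, $\langle \s_2^6 \rangle = \langle \s_2^{\gcd(6,q)} \rangle$ with $\gcd(6,q) \in \{1,2,3,6\}$, and $\s_2^2 \notin \langle \s_2^6 \rangle$ forces $\gcd(6,q)$ not to divide $2$, hence $\gcd(6,q) \in \{3,6\}$, i.e.\ $3 \mid q$. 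This completes the proof.

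I do not expect any real obstacle here: all the substantive work was already done in \lref{s14-s26-normal} (which assembled $\langle \s_1^4 \rangle$ and $\langle \s_2^6 \rangle$ as normal subgroups out of \pref{i-and-j-subgps} (b), \lref{a-and-b-subgps}, and \lref{a-and-j-restrictions}) and in \pref{nonor-no-2}. The only step that requires a moment's care is the observation in the second paragraph that a normal cyclic subgroup cannot coincide with a non-normal one, which is precisely what upgrades ``$\langle \s_1^4 \rangle$ normal, $\langle \s_1^2 \rangle$ not normal'' into the usable statement ``$\s_1^2 \notin \langle \s_1^4 \rangle$.''
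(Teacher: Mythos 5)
Your proof is correct and follows exactly the paper's own argument: combine the normality of $\langle \s_1^4 \rangle$ and $\langle \s_2^6 \rangle$ from \lref{s14-s26-normal} with the non-normality of $\langle \s_1^2 \rangle$ and $\langle \s_2^2 \rangle$ from \pref{nonor-no-2} to conclude $\s_1^2 \notin \langle \s_1^4 \rangle$ and $\s_2^2 \notin \langle \s_2^6 \rangle$, and then read off the divisibility conditions. You merely spell out two small steps (that the two cyclic subgroups would otherwise coincide, and the gcd computation) that the paper leaves implicit.
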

	
	\begin{proof}
	\lref{s14-s26-normal} says that $\langle \s_1^4 \rangle$ and $\langle \s_2^6 \rangle$ are both normal, and \pref{nonor-no-2} says
	that neither $\langle \s_1^2 \rangle$ and $\langle \s_2^2 \rangle$ is normal. It follows that $\s_1^2 \notin \langle \s_1^4 \rangle$
	and that $\s_2^2 \notin \langle \s_2^6 \rangle$. Therefore, $p$ is a multiple of $4$ and $q$ is a multiple of $3$.
	\end{proof}

	\begin{lemma}
	\label{lem:value-of-i}
	Suppose that $\calP$ is a tight non-orientably regular polyhedron of type $\{p, q\}$ with $\G(\calP) = \D(p,q)_{(i,j,a,b)}$,
	If $p/4 \equiv 3$ (mod $4$), then $i = p/4 - 1$, and otherwise $i = 3p/4 - 1$.
	\end{lemma}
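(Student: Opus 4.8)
The plan is to determine $i$ modulo $p$ by combining the normality facts already established with an analysis of how $\s_2$ conjugates the cyclic group $\langle\s_1\rangle$. I shall freely use: $a=1+p/2$ and the restrictions on $j$ from \lref{a-and-j-restrictions}; $4\mid p$ from \cref{p-and-q-divisors}; that $\langle\s_1^4\rangle$ is normal (\lref{s14-s26-normal}); that $\langle\s_1^{i-2}\rangle$ is normal with $\s_2$ inverting $\s_1^{i-2}$ (\pref{i-and-j-subgps}(b)); that $\langle\s_1^{i-3-a}\rangle$ is normal with $\s_2$ inverting $\s_1^{i-3-a}$ (\lref{a-and-b-subgps}); and that $\langle\s_1^2\rangle$ is \emph{not} normal (\pref{nonor-no-2}).

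First I would show $i\equiv 2\pmod 4$. The set $D=\{m\in\Z:\s_2^{-1}\s_1^m\s_2=\s_1^{-m}\}$ is a subgroup of $\Z$ containing $p$, hence $D=e\Z$ with $e\mid p$; it contains $4$, so $e\mid 4$; and it cannot contain $1$ or $2$, since otherwise $\langle\s_1^2\rangle$ (or, if $e=1$, the group $\langle\s_1\rangle$, whose characteristic subgroup $\langle\s_1^2\rangle$ would then be normal) would be a normal subgroup inverted by $\s_2$ and left invariant by $\rho_0,\rho_1$, contradicting \pref{nonor-no-2}. Thus $e=4$: that is, $\s_2$ inverts $\s_1^m$ precisely when $4\mid m$, and since $i-2\in D$ we get $4\mid i-2$. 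Next, since $a=1+p/2$ the exponent $i-3-a=i-4-p/2$ also lies in $D$, so $4\mid i-4-p/2$; subtracting, $4\mid (i-2)-(i-4-p/2)=2+p/2$, so $p/2\equiv 2\pmod 4$. Hence $p\equiv 4\pmod 8$ and $p/4$ is odd.

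The crux — and the step I expect to be hardest — is to prove $(p/4)\mid(i+1)$, equivalently $(\s_1^4)^{i+1}=1$ (note $\s_1^4$ has order $p/4$). A painless special case is $q$ odd: then $\s_2^q=1$, so conjugation by $\s_2^q$ is trivial, yet it sends $\s_1^4$ to $\s_1^{(-1)^q\cdot 4}=\s_1^{-4}$; hence $\s_1^8=1$, and with $p\equiv 4\pmod 8$ this forces $p=4$, whence $i=2=3p/4-1$ by the previous step. For $q$ even one needs a genuine computation. I would first reduce to the case $p=4\ell^k$ for a single odd prime $\ell$: for every other odd prime power $\ell'^{k'}$ exactly dividing $p$ the subgroup $\langle\s_1^{p/\ell'^{k'}}\rangle$ lies in $D$ (because $4\mid p/\ell'^{k'}$) and so is normal, and by \pref{simple-quo} one may pass to the quotient killing all odd Sylow subgroups of $\langle\s_1\rangle$ but one, obtaining again a tight non-orientably regular polyhedron with the same value of $i$ modulo the new (smaller) $p$. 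In that situation, using $a=1+2\ell^k$, one iterates the two defining relations $\s_2^{-1}\s_1=\s_1^i\rho_1\s_2^j$ and $\s_2^{-2}\s_1=\s_1^a\s_2^b$ together with $\rho_1\s_1=\s_1^{-1}\rho_1$, the identity $\s_2^{-1}\rho_1\s_2=\rho_1\s_2^2$, and the rule that $\s_2$ inverts $\s_1^m$ when $4\mid m$, to put $\s_2^{-n}\s_1\s_2^{\,n}$ into the normal form $\s_1^x\rho_1^\varepsilon\s_2^y$ and track the exponent $x$; comparing with $\s_1$ having order exactly $p=4\ell^k$ and with $\langle\s_1\rangle\cap\langle\s_2\rangle=\{1\}$ should force $i\equiv -1\pmod{\ell^k}$. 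An alternative packaging is induction on $p$: by \pref{simple-quo} the quotient $\G(\calP)/\langle\s_1^{i-2}\rangle$ is a tight non-orientably regular polyhedron whose $i$-parameter is $2$ (since $\s_1^i\equiv\s_1^2$ there), so the inductive hypothesis pins that quotient down, and the base case $p=4$ — where $i=2$ is already the hemicube value $\D(4,3)_{(2,-2,-1,2)}$ from the proof of \tref{second-rel} — closes the loop. Either way the relation-chasing, while routine, carries the real weight.

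Finally I would assemble the pieces. By $(p/4)\mid(i+1)$ the residue of $i+1$ mod $p$ is one of $0,\ p/4,\ p/2,\ 3p/4$. The residue $0$ would give $i\equiv 3\pmod 4$, and $p/2$ would give $i\equiv 1\pmod 4$ (using $p/2\equiv 2\pmod 4$), both excluded by $i\equiv 2\pmod 4$; so $i+1\equiv p/4$ or $3p/4\pmod p$. As $p/4$ is odd and $p/4$ and $3p/4$ differ by $p/2\equiv 2\pmod 4$, exactly one of $p/4-1$ and $3p/4-1$ is $\equiv 2\pmod 4$, namely $p/4-1$ when $p/4\equiv 3\pmod 4$ and $3p/4-1$ otherwise — which is exactly the asserted value of $i$.
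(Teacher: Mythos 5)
Your overall architecture is sound and runs parallel to the paper's: the derivation of $i\equiv 2\pmod 4$ from the normality of $\langle\s_1^4\rangle$ and $\langle\s_1^{i-2}\rangle$ together with \pref{nonor-no-2} is correct (and your observation that $i-3-a$ also lies in your set $D$, forcing $p\equiv 4\pmod 8$, is a nice bonus that the paper only obtains later, via the Atlas), the $q$-odd special case works, and the final bookkeeping converting $(p/4)\mid(i+1)$ plus $i\equiv 2\pmod 4$ into the stated dichotomy is fine.

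However, the central claim $(p/4)\mid(i+1)$ is never actually proved, and it carries essentially all of the content of the lemma. For $q$ even you say that iterating the defining relations to put $\s_2^{-n}\s_1\s_2^{n}$ into normal form ``should force'' $i\equiv -1\pmod{\ell^k}$; that is exactly the assertion to be established, left as an unexecuted computation. The alternative ``induction on $p$'' packaging does not close the gap either: applying the inductive hypothesis to $\G(\calP)/\langle\s_1^{i-2}\rangle$ only tells you that the reduced parameter $2$ equals $p'/4-1$ or $3p'/4-1$ where $p'=\gcd(i-2,p)$, i.e.\ it constrains $\gcd(i-2,p)$ (to be $4$ or $12$) rather than determining $i$ modulo $p$. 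What is needed---and what the paper does---is one short, targeted computation: pass to the quotient by the normal subgroups $\langle\s_2^{j+2}\rangle$ and $\langle\s_2^{b-2}\rangle$, which lets one take $j=-2$ and $b=2$ without changing $i$, $a$, or $p$, and then compute $\s_2\s_1^4\s_2^{-1}=\s_1^{2i-a-1}=\s_1^{2i-2-p/2}$ directly from the two defining relations together with \pref{i-and-j-subgps}(b); comparing with $\s_2\s_1^4\s_2^{-1}=\s_1^{-4}$ yields $2i\equiv p/2-2\pmod p$, hence $i\equiv p/4-1\pmod{p/2}$, which is precisely the congruence your argument lacks. Until some such explicit computation is supplied, the proof is incomplete at its key step.
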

	
	\begin{proof}
	\pref{i-and-j-subgps} (b) says that $\langle \s_1^{i-2} \rangle$ is normal, and \lref{s14-s26-normal} says that
	$\langle \s_1^4 \rangle$ is normal. Because of \pref{nonor-no-2}, no subgroup of $\langle \s_1 \rangle$ containing $\langle \s_1^4 \rangle$ properly
	is normal, and thus $\langle \s_1^{i-2} \rangle$ must be contained in $\langle \s_1^4 \rangle$. It follows that
	$i \equiv 2$ (mod 4).

	Now, suppose we take the quotient of $\G(\calP)$ by $\langle \s_2^{j+2} \rangle$ and $\langle \s_2^{b-2} \rangle$.
	This has the effect of replacing $j$ with $-2$ and $b$ with $2$ without changing $i$, $a$, or
	$p$. Then
	\begin{align*}
	\s_2 \s_1^4 &= \s_1^{-1} \s_2^{-1} \s_1^3 \\
	&= \s_1^{i-1} \rho_1 \s_2^{-2} \s_1^2 \\
	&= \s_1^{i-1} \rho_1 \s_1^a \s_2^2 \s_1 \\
	&= \s_1^{i-1} \rho_1 \s_1^a \s_2 \s_1^{-1} \s_2^{-1} \\
	&= \s_1^{i-1} \rho_1 \s_1^{a-i} \rho_1 \s_2 \\
	&= \s_1^{2i-a-1} \s_2.
	\end{align*}
	Since $a = 1 + p/2$, we have that $\s_2 \s_1^4 \s_2^{-1} = \s_1^{2i-2-p/2}$. On the other hand, \lref{s14-s26-normal} says that $\s_2 \s_1^4 \s_2^{-1} = \s_1^{-4}$,
	so it follows that $2i - 2 - p/2 \equiv -4$ (mod $p$). Therefore, $2i \equiv p/2 - 2$ (mod $p$), and thus $i \equiv p/4 - 1$ (mod $p/2$).
	Thus we see that $i = p/4 - 1$ or $i = 3p/4 - 1$. In order for $i \equiv 2$ (mod $4$), we need to pick $i = p/4 - 1$ if $p/4 \equiv 3$ (mod 4), and
	otherwise we need to pick $i = 3p/4 - 1$.
	\end{proof}	

	\begin{lemma}
	\label{lem:value-of-b}
	Suppose that $\calP$ is a tight non-orientably regular polyhedron of type $\{p, q\}$ with $\G(\calP) = \D(p,q)_{(i,j,a,b)}$,
	Then $b = 2$.
	\end{lemma}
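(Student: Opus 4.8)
The plan is to use the structure already established for $\D(p,q)_{(i,j,a,b)}$ to pin down $b$ by the same philosophy used in \lref{value-of-i}: reduce to a controlled quotient, exploit the normality of $\langle \s_2^6 \rangle$ from \lref{s14-s26-normal}, and compare two expressions for a conjugate of a power of $\s_2$. By \cref{p-and-q-divisors} we know $3 \mid q$, and by \lref{b-equiv-2} we have $b = 2$ or $b = 2 + q/2$ (the latter only possible when $q$ is even). So the goal is to rule out $b = 2 + q/2$ when $q$ is even.

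First I would pass to a convenient quotient. Using \pref{tight-quo-means-tight} and \pref{i-and-j-subgps}(b), take the quotient of $\G(\calP)$ by $\langle \s_1^{i-2} \rangle$ and by $\langle \s_1^4 \rangle$ (the latter normal by \lref{s14-s26-normal}); since $i \equiv 2 \pmod 4$ by \lref{value-of-i}, this collapses $\s_1$ to order $4$ (or a divisor), replacing $i$ by $2$ and $a$ by $1$ — wait, $a = 1 + p/2$, and modulo $4$ this becomes $a \equiv 1 + p/2$; one has to be a little careful here, so I would instead work modulo the subgroup that reduces $p$ to $4$, giving the group $\D(4, q')_{(2, j', a', b')}$ for some $q' \mid q$ with $3 \mid q'$, where $b'$ is the image of $b$. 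Crucially, reducing $p$ does not change $q$ (the relevant normal subgroup is generated by a power of $\s_1$), so if I can show $b' = 2$ forces $b = 2$, I am done; but this implication is not automatic, so the cleaner route is to keep $q$ fixed and instead quotient only by powers of $\s_1$, landing in $\D(4, q)_{(2, j, 1 + 2, b)} = \D(4,q)_{(2, j, 3, b)}$ — here $p/2 = 2$ so $a = 1 + 2 = 3$.

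In this quotient I would compute $\s_2 \s_1^2 \s_2^{-1}$ (or $\s_2 \s_1 \s_2^{-1}$) in two ways. On one hand, from the relation $\s_2^{-2}\s_1 = \s_1^a \s_2^b$ together with \pref{reverse-rel} and Equation (\ref{eq:sinvolution}) one derives an expression for how $\s_2$ conjugates powers of $\s_1$; on the other hand, \lref{s14-s26-normal} tells us exactly how $\s_2$ conjugates $\s_1^4 = \eps$ (vacuous now) — so instead I would track $\s_2$ acting on $\s_2^6$ or use the relation $\s_2^{-3}\s_1 = \s_1 \s_2^3$ type identity that appears in the proof of \tref{second-rel}. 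The key computation should produce a congruence of the form $2b \equiv$ (something involving $p$) $\pmod q$, and combined with $2b \equiv 4 \pmod q$ from \lref{b-equiv-2}, this should force $b \equiv 2 \pmod q$. Concretely: in the quotient where $b$ has been replaced by its reduction, the relation $\s_2^{-2}\s_1 = \s_1^a \s_2^b$ and the normality of $\langle \s_2^6 \rangle$ should give that $\s_2^b \in \langle \s_2^6 \rangle \cdot \s_2^{\pm 2}$-coset structure, and since $b$ is even (forced because $\s_2^b$ is conjugate to $\s_2^{-2}$, which has order $q/2$ when $q$ is even — exactly the argument from the $a=1$ case in \lref{a-and-j-restrictions}), writing $b = 2$ or $b = 2 + q/2$ with $b$ even forces $q/2$ even if $b = 2 + q/2$, i.e. $4 \mid q$; then one needs a further relation to exclude this.

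The main obstacle I anticipate is the last step: excluding $b = 2 + q/2$ in the case $4 \mid q$. Here I expect one must use the interplay between the $\s_1$-relation and the $\s_2$-relation more delicately — perhaps deriving, as in the proof of \tref{second-rel}, that in the fully-reduced quotient $\D(4,3)_{(2,-2,-1,2)}$ one is forced, and then arguing that any tight $\D(4,q)_{(2,j,3,b)}$ with $b = 2+q/2$ would, upon quotienting by $\langle \s_2^3 \rangle$ and the appropriate power of $\s_1$, fail to reach the hemicube group or would violate the intersection condition $\langle \s_1 \rangle \cap \langle \s_2 \rangle = \{\eps\}$. Alternatively, one computes directly that $b = 2 + q/2$ makes $\langle \s_2^3 \rangle$ fail to be normal or makes $\langle \s_2^2 \rangle$ normal (contradicting \pref{nonor-no-2}) — checking which of these holds is where the real work is, and it may require a GAP verification in the spirit of the one invoked at the end of \tref{second-rel}. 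Once $b = 2$ is established, all subsequent parameters $(i,j,a)$ are determined by the earlier lemmas, so this lemma is the last structural input needed before the classification of the non-orientable case can be completed.
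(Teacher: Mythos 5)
There is a genuine gap. Your reduction to $p=4$ by quotienting by $\langle\s_1^4\rangle$ is exactly the paper's first move (this preserves $q$ and $b$, and forces $a=3$, $i=2$), but you never supply the computation that actually kills $b = 2+q/2$ --- you explicitly defer it to ``where the real work is'' and suggest it may need GAP. The decisive step is a short hand calculation in the $p=4$ quotient: starting from $\s_1^3\s_2^b = \s_2^{-2}\s_1$, expand $\s_2^{-1}(\s_2^{-1}\s_1)$ using the relation $\s_2^{-1}\s_1 = \s_1^2\rho_1\s_2^j$, then use $\s_1^4=\eps$, $(\s_1\s_2)^2=\eps$, and the reversed relation from \pref{reverse-rel} to arrive at $\s_1^3\s_2^{2j}$. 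By the intersection condition this gives $b\equiv 2j \pmod q$, and since $j=1$ or $j=1+q/2$ one gets $2j\equiv 2\pmod q$ in either case, so $b=2$ outright. Note that this is a congruence for $b$ itself, not for $2b$; your anticipated conclusion ``$2b\equiv(\text{something})\pmod q$'' could never close the argument, because $2b\equiv 4\pmod q$ is exactly the relation from \lref{b-equiv-2} that leaves the ambiguity $b\in\{2,\,2+q/2\}$ unresolved when $q$ is even.

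A secondary flaw: you argue that $b$ must be even ``because $\s_2^b$ is conjugate to $\s_2^{-2}$, exactly the argument from the $a=1$ case in \lref{a-and-j-restrictions}.'' That conjugacy comes from $\s_2^{-2}\s_1=\s_1\s_2^b$, i.e.\ it requires $a=1$; in your reduced group $a=3$, so $\s_2^b = \s_1^{-3}\s_2^{-2}\s_1$ is not an element of $\langle\s_2\rangle$ conjugated within $\langle\s_2\rangle$, and the order argument does not apply. Neither of your two proposed fallback contradictions (non-normality of $\langle\s_2^3\rangle$, or normality of $\langle\s_2^2\rangle$) is needed once the $b\equiv 2j$ identity is in hand.
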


	\begin{proof}
	First, suppose that $p = 4$. Then $a = 3$ by \lref{a-and-j-restrictions}, and $i = 2$ by \lref{value-of-i}. Therefore,
	\begin{align*}
	\s_1^3 \s_2^b &= \s_2^{-2} \s_1 \\
	&= \s_2^{-1} \s_1^2 \rho_1 \s_2^j \\
	&= \s_2^{-1} \s_1^{-2} \rho_1 \s_2^j \\
	&= \s_1 \s_2 \s_1^{-1} \rho_1 \s_2^j \\
	&= \s_1 \s_1^2 \rho_1 \s_2^{-j} \rho_1 \s_2^j \\
	&= \s_1^3 \s_2^{2j}.
	\end{align*}
	It follows that $b \equiv 2j$ (mod $q$). Since $j = 1$ or $j = 1 + q/2$, we must have that $b \equiv 2$ (mod $q$), and since
	we can take $0 \leq b \leq q-1$, it follows that $b = 2$.
	
	If $p \neq 4$, we nevertheless have by \lref{s14-s26-normal} that $\langle \s_1^4 \rangle$ is a normal subgroup.
	Taking the quotient by this subgroup forces $p = 4$ without changing $b$, and so since $b = 2$ in the quotient, it follows
	that $b = 2$ in the original group.
	\end{proof}

	\begin{theorem}
	\label{thm:parameters}
	Let $\calP$ be a tight non-orientably regular polyhedron of type $\{p, q\}$ such that $\G(\calP) = \D(p,q)_{(i,j,a,b)}$.
	Then
	\begin{enumerate}
	\item $p$ is an odd multiple of $4$.
	\item $q$ is a multiple of $3$. Furthermore, if $p \neq 4$, then $q$ is an odd multiple of $6$.
	\item If $p/4 \equiv 3$ (mod $4$), then $i = p/4-1$, and otherwise $i = 3p/4-1$.
	\item If $p \neq 4$, then $j = 1 + q/2$, and if $p = 4$ then either $j = 1$ or $j = 1 + q/2$.
	\item $a = 1 + p/2$.
	\item $b = 2$.
	\end{enumerate}
	\end{theorem}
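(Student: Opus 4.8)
The plan is to read Theorem~\ref{thm:parameters} as an assembly of the preceding results, together with two small ``parity upgrades.'' Parts (3), (5) and (6) are exactly \lref{value-of-i}, \lref{a-and-j-restrictions}(a) and \lref{value-of-b}, and part (4) is \lref{a-and-j-restrictions}(b), so for those I would simply cite the corresponding statements. \cref{p-and-q-divisors} already gives $4 \mid p$ and $3 \mid q$, so the only genuinely new content is that $p/4$ is odd in part (1), and that $q$ is an \emph{odd} multiple of $6$ when $p \neq 4$ in part (2).

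For the first: by \pref{i-and-j-subgps}(b) the subgroup $\langle \s_1^{i-2} \rangle$ is normal, and by \lref{s14-s26-normal} so is $\langle \s_1^4 \rangle$; hence their product, which (since $4 \mid p$) equals $\langle \s_1^{\gcd(i-2,4)} \rangle$, is normal. But $\langle \s_1 \rangle$ is not normal (this holds in any string C-group with $q \geq 3$, and $\calP$ is not of type $\{p,2\}$) and $\langle \s_1^2 \rangle$ is not normal by \pref{nonor-no-2}, so $\gcd(i-2,4) = 4$, i.e.\ $i \equiv 2 \pmod 4$. On the other hand \lref{value-of-i} gives $i = p/4 - 1$ or $i = 3p/4 - 1$, and since $4 \mid p$ both of these are congruent to $p/4 + 1$ modulo $2$; as $i$ is even, $p/4$ must be odd, which is part (1).

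For the second, assume $p \neq 4$; then part (4) gives $j = 1 + q/2$, so $q$ is even and, with $3 \mid q$, a multiple of $6$. Suppose for contradiction that $12 \mid q$. Then $j + 2 = 3 + q/2$, and a short computation of $\gcd(3 + q/2,\, q)$ yields the value $3$, so the normal subgroup $\langle \s_2^{j+2} \rangle$ of \pref{i-and-j-subgps}(b) is exactly $\langle \s_2^3 \rangle$. By \pref{simple-quo} the quotient $\G(\calP)/\langle \s_2^3 \rangle$ is then a string C-group; counting the orders of the images of $\s_1$ and $\s_2$ shows it is the automorphism group of a regular polyhedron $\calQ$ of type $\{p, 3\}$, which is tight because a quotient of a tight group is tight, and is non-orientably regular by \pref{quos-nonor}. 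However no such $\calQ$ exists when $p \neq 4$: by \tref{second-rel} we have $\G(\calQ) = \D(p,3)_{(i',j',a',b')}$ or $\G(\calQ^{\delta}) = \D(3,p)_{(i',j',a',b')}$ for some parameters; the second case is impossible since \cref{p-and-q-divisors} would force $4 \mid 3$, and in the first case \lref{a-and-j-restrictions}(b) applied to $\calQ$ (whose type $\{p,3\}$ has $p \neq 4$) gives a contradiction, since the only solution of $2j' \equiv 2 \pmod 3$ is $j' = 1$, which by that lemma forces $p = 4$. This contradiction shows $12 \nmid q$, so $q$ is an odd multiple of $6$, completing part (2).

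The step I expect to be the real obstacle is this last one: ruling out $12 \mid q$ is the single place where the earlier lemmas do not immediately suffice, and it needs both the arithmetic identification $\langle \s_2^{j+2} \rangle = \langle \s_2^3 \rangle$ and the reduction to type $\{p,3\}$ together with the non-existence argument above. Everything else is bookkeeping on top of \lref{a-and-j-restrictions}, \lref{s14-s26-normal}, \lref{value-of-i}, \lref{value-of-b} and \cref{p-and-q-divisors}.
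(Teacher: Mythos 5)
Your proposal is correct, and its overall architecture (assemble Lemmas \ref{lem:a-and-j-restrictions}--\ref{lem:value-of-b} and \cref{p-and-q-divisors}, then prove the two parity refinements) matches the paper's; but your arguments for the two refinements genuinely differ from the paper's. For part (a), the paper rules out $8 \mid p$ by quotienting down to a putative tight non-orientably regular polyhedron of type $\{8,6\}$ or $\{8,3\}$ and appealing to the Atlas of Small Regular Polytopes to see that none exists; your argument instead combines the congruence $i \equiv 2 \pmod 4$ (which you re-derive exactly as in the proof of \lref{value-of-i}) with $i \in \{p/4-1,\, 3p/4-1\}$ to force $p/4$ odd by pure parity. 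This is cleaner and removes the dependence on a computer-checked catalogue, which is a real gain. (One small quibble: your parenthetical claim that $\langle \s_1 \rangle$ is non-normal ``in any string C-group with $q \geq 3$'' is true but not obvious; the cheaper justification is that $\langle \s_1^2 \rangle$ is characteristic in $\langle \s_1 \rangle$, so normality of $\langle \s_1 \rangle$ would contradict \pref{nonor-no-2} directly.) For part (b), the paper quotients by $\langle \s_2^{12} \rangle$ to reach type $\{p,12\}$ with $j=7$ and then by $\langle \s_2^6 \rangle$ to reach $j=1$ and invoke \lref{a-and-j-restrictions}; you instead identify $\langle \s_2^{j+2} \rangle = \langle \s_2^3 \rangle$ and quotient straight to type $\{p,3\}$, then rule that type out via \tref{second-rel}, \cref{p-and-q-divisors} and \lref{a-and-j-restrictions}. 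The two routes are equivalent in substance -- both reduce to the fact that $j=1$ forces $p=4$ -- and your gcd computation and the preservation of the orders of $\s_1$ and $\s_2$ in the quotient both check out.
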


	\begin{proof}
	Parts (c) through (f) were proved in Lemmas \ref{lem:a-and-j-restrictions} to \ref{lem:value-of-b}. It remains to prove parts (a) and (b).

	\cref{p-and-q-divisors} tells us that $p$ is a multiple of $4$ and that $q$ is a multiple of $3$. Further, note that if $q$ is odd, then $j = 1$ (since $j = 1 + q/2$ is impossible in this case). Then \lref{a-and-j-restrictions} tells us that $p = 4$.
	So if $p \neq 4$, it must be that $q$ is a multiple of $6$.
	
	Now, suppose that $8$ divides $p$. Then since $\langle \s_1^4 \rangle$ is normal, so is $\langle \s_1^8 \rangle$. Taking the quotient
	by this normal subgroup then yields a tight non-orientably regular polyhedron of type $\{8, q\}$. In this quotient, $\langle \s_2^6 \rangle$
	is normal, and the quotient by this group yields a tight non-orientably regular polyhedron of type $\{8, 6\}$ or $\{8, 3\}$. But no
	such polyhedron exists (which we confirm by checking the Atlas of Small Regular Polytopes \cite{atlas}). Therefore, $8$ cannot divide
	$p$, and so $p$ is an odd multiple of $4$.
	
	We have already established that if $p \neq 4$, then $q$ cannot be odd, and so it must be a multiple of 6. Suppose that
	$q$ is a multiple of 12. Then since $\langle \s_2^6 \rangle$ is normal (by \lref{s14-s26-normal}), so is
	$\langle \s_2^{12} \rangle$, and the quotient by this normal subgroup yields a tight non-orientably regular polyhedron
	of type $\{p, 12\}$. Since $p \neq 4$, \lref{a-and-j-restrictions} says that $j = 7$. Now, $\langle \s_2^6 \rangle$ is a normal
	subgroup of our quotient by $\langle \s_2^{12} \rangle$, and in passing to the quotient by $\langle \s_2^6 \rangle$,
	we may replace $j$ with $1$. In that case, \lref{a-and-j-restrictions} says that $p = 4$ after all.
	Since $p \neq 4$, it follows that $q$ is not a multiple of 12.
	\end{proof}

	Thus, with the exception of the case where $p = 4$, there is only a single choice of parameters that (might) work,
	and in the case $p = 4$, there are 2 choices. It remains to show that there really are tight non-orientably regular polyhedra
	of these types $\{p, q\}$.
	
	\begin{lemma}
	\label{lem:non-or-existence}
	Let $r$ and $k$ be odd, let $p = 4r$ and let $q = 6k$. Let $i$, $j$, $a$, and $b$ satisfy the conditions of
	\tref{parameters}. Then $\D(p,q)_{(i,j,a,b)}$ is a string C-group of type $\{p, q\}$.
	\end{lemma}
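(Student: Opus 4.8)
The plan is to reduce the statement to a numerical fact and then establish that fact by an explicit construction. Since the displayed computation in the proof of \tref{second-rel} shows that $\D(p,q)_{(i,j,a,b)}$ is tight, we have $\D(p,q)_{(i,j,a,b)} = \langle\s_1,\rho_1\rangle\langle\rho_1,\s_2\rangle$, where $\langle\s_1,\rho_1\rangle = \langle\rho_0,\rho_1\rangle$ has order at most $2p$ and $\langle\rho_1,\s_2\rangle = \langle\rho_1,\rho_2\rangle$ has order at most $2q$; as both subgroups contain $\langle\rho_1\rangle$, the order of $\D(p,q)_{(i,j,a,b)}$ is at most $2pq$, with equality exactly when the two subgroups have full orders $2p$ and $2q$ and meet in $\langle\rho_1\rangle$. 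By \cite[Prop.\ 2E16(a)]{arp} this last condition already implies the intersection condition. Hence $\D(p,q)_{(i,j,a,b)}$ is a string C-group of type $\{p,q\}$ if and only if it has order $2pq$; being tight, it therefore suffices to exhibit a homomorphic image of order $2pq$.

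Write $p = 4r$ and $q = 6k$ with $r,k$ odd, and let $3^{e}\parallel k$. By \lref{s14-s26-normal} the subgroups $\langle\s_1^4\rangle$ and $\langle\s_2^6\rangle$ are normal, and a direct check shows that conjugation by each of $\s_1,\rho_1,\s_2$ sends $\s_1^4\mapsto\s_1^{\pm4}$ and $\s_2^6\mapsto\s_2^{\pm6}$; concretely, the induced action on a cyclic layer of order coprime to $6$ is by the signs $(+,-,-)$ for $(\s_1,\rho_1,\s_2)$ in the $\s_1$-direction and $(-,-,+)$ in the $\s_2$-direction, and well-definedness of the first uses that $j = 1+q/2$ is even when $p\neq4$, per \tref{parameters}(4). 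The image $H$ is then built as an iterated semidirect product: take as base the group $\D(4,6\cdot3^{e})$ with the parameters forced by \tref{parameters} --- a string C-group of type $\{4,6\cdot3^{e}\}$ and order $48\cdot3^{e}$, which is verified by computer for the small values of $e$ that occur (exactly as $\D(4,3)_{(2,-2,-1,2)}$ was verified in the proof of \tref{second-rel}, using \cite{atlas} or \cite{gap}) --- and extend it by $\Z_{k/3^{e}}$ in the $\s_2$-direction and by $\Z_r$ in the $\s_1$-direction, lifting the generators to $\tilde\s_1,\tilde\rho_1,\tilde\s_2$ so that $\tilde\s_1^{4}$ generates the $\Z_r$-factor and $\tilde\s_2^{6}$ generates the $\Z_{k/3^{e}}$-factor. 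One then checks that $H$ satisfies the defining relations of $\D(p,q)_{(i,j,a,b)}$, that $\tilde\s_1$ has order $p$ (using $\gcd(4,r)=1$) and $\tilde\s_2$ has order $q$ (using $\gcd(6\cdot3^{e},k/3^{e})=1$), and that $|H| = 48\cdot3^{e}\cdot(k/3^{e})\cdot r = 2pq$.

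The main obstacle is this construction, and specifically the prime $3$: when $3\mid k$ the group $\Z_q$ does not split off a factor $\Z_6$, so $H$ cannot be a direct product of a fixed finite group with an abelian group --- the $3$-part of $\s_2$ grows without bound --- which is why the base group must itself absorb the full $3$-primary part of $k$. Showing that $\D(4,6\cdot3^{e})$ has order $48\cdot3^{e}$ for every $e$, rather than collapsing to a smaller quotient, is the delicate point; I would prove it by induction on $e$, anchored at the computer-checked cases $e\in\{0,1\}$, with the inductive step producing the required $\Z_3$-extension in which $\s_2$ acquires order $6\cdot3^{e}$. With the base and $3$-primary cases settled, the remaining work --- well-definedness of the semidirect actions, the orders of $\tilde\s_1$ and $\tilde\s_2$, and the survival of the extra relations $\s_2^{-1}\s_1=\s_1^i\rho_1\s_2^j$ and $\s_2^{-2}\s_1=\s_1^a\s_2^b$ in $H$ --- is routine arithmetic.
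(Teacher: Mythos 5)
Your reduction of the lemma to exhibiting a homomorphic image of $\D(p,q)_{(i,j,a,b)}$ of order $2pq$ is sound: tightness gives the upper bound $2pq$ on the order, and equality forces $\s_1$ and $\s_2$ to have orders exactly $p$ and $q$ and forces $\langle\rho_0,\rho_1\rangle\cap\langle\rho_1,\rho_2\rangle=\langle\rho_1\rangle$, whence the intersection condition by \cite[Prop.\ 2E16(a)]{arp}. The problem is that the image is never actually constructed. Everything hinges on the claim that $\D(4,6\cdot3^{e})$, with the forced parameters, has order $48\cdot3^{e}$ for every $e\ge0$, and for this you offer only a plan: induction on $e$ with an inductive step ``producing the required $\Z_3$-extension.'' That step is precisely the hard content --- it asks you to show that a finitely presented group of exactly the same kind does not collapse, i.e.\ it is the original problem restricted to the subfamily $p=4$, $k=3^{e}$ --- and no mechanism for producing the extension is given. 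Computer verification covers only finitely many $e$, while $k$ is an arbitrary odd number, so $e$ is unbounded; your earlier phrase ``the small values of $e$ that occur'' is simply false. The surrounding semidirect-product bookkeeping (well-definedness of the sign actions, survival of the two extra relations in $H$) is also left as ``routine arithmetic,'' but the $3$-primary base case is the genuine gap.

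The paper's proof sidesteps all of this. It first notes that $\D(p,q)_{(i,j,a,b)}$ covers $\D(4,6)_{(i,j,a,b)}$, which GAP confirms has type $\{4,6\}$, so the order of $\s_1$ is divisible by $4$ and that of $\s_2$ by $6$. It then maps $\D(p,q)_{(i,j,a,b)}$ onto the dihedral groups $\langle x,y\mid x^2=y^2=(xy)^{r}=1\rangle$ (sending $\rho_2\mapsto1$) and $\langle y,z\mid y^2=z^2=(yz)^{3k}=1\rangle$ (sending $\rho_0\mapsto1$); the parameter values from \tref{parameters} make these maps well defined, and they show the order of $\s_1$ is divisible by $r$ and that of $\s_2$ by $3k$. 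Since $r$ and $k$ are odd, $\lcm(4,r)=4r$ and $\lcm(6,3k)=6k$, so the orders are exactly $p$ and $q$; in particular the entire $3$-primary part of $k$ is handled by a single dihedral quotient, with no induction and no extension theory. The string C-group property then follows from two applications of \pref{quo-crit} along $\D(p,q)_{(i,j,a,b)}\to\D(4,q)_{(i,j,a,b)}\to\D(4,6)_{(i,j,a,b)}$. If you replace your unproven claim about $\D(4,6\cdot3^{e})$ with these dihedral quotients, your argument closes --- but at that point the iterated extension construction is doing no work and you have essentially reproduced the paper's proof.
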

	
	\begin{proof}
	Since $p$ is a multiple of $4$ and $q$ is a multiple of $6$, the group $\D(p,q)_{(i,j,a,b)}$ covers $\D(4,6)_{(i,j,a,b)}$.
	In the latter, we may reduce $i$ and $a$ modulo 4, and we may reduce $j$ and $b$ modulo 6.
	The parameter $i$ was chosen (in \lref{value-of-i}) such that $i \equiv 2$ (mod $4$), and since $a = 1 + p/2 = 1 + 2r$
	for some odd integer $r$, it follows that $a \equiv 3$ (mod $4$). The parameter $j$ satisfies
	$j \equiv 1$ (mod $q/2$), and thus $j \equiv 1$ (mod $3k$), which implies that $j \equiv 1$ (mod $3$).
	Therefore, $j \equiv 1$ or $4$ (mod $6$). Finally, $b = 2$. It follows that $\D(4,6)_{(i,j,a,b)} = \D(4,6)_{(2,1,3,2)}$
	or $\D(4,6)_{(2,4,3,2)}$. Using GAP \cite{gap}, we can verify that these latter two groups are the automorphism groups of (non-isomorphic) tight polyhedra of type
	$\{4, 6\}$, ($\{4, 6\}*48b$ and $\{4,6\}*48c$ in \cite{atlas}),
	so in $\D(p,q)_{(i,j,a,b)}$, the order of $\s_1$ is divisible by $4$ and the order of $\s_2$ is divisible by $6$.

	Now, let $G = \langle x, y \mid x^2 = y^2 = (xy)^r = 1 \rangle$. Then a small calculation shows that the function
	$\varphi: \D(p,q)_{(i,j,a,b)} \to G$ that sends $\rho_0$ to $x$, $\rho_1$ to $y$, and $\rho_2$ to $1$ is
	a surjective group homomorphism. From this it follows that the order of $\s_1$ is divisible by $r$. Since the
	order of $\s_1$ is also divisible by $4$ and is a divisor of $4r$, the order must be exactly $4r$.

	Similarly, let $H = \langle y, z \mid y^2 = z^2 = (yz)^{3k} = 1 \rangle$. Then the function
	$\varphi: \D(p,q)_{(i,j,a,b)} \to H$ sending $\rho_0$ to $1$, $\rho_1$ to $y$, and $\rho_2$ to $z$ is a surjective
	group homomorphism. Thus, the order of $\s_2$ is divisible by $3k$, and since it is also divisible by $6$
	and a divisor of $6k$, the order must be $6k$.
	
	We have established that $\D(p,q)_{(i,j,a,b)}$ has type $\{p, q\}$. To see that it is a string C-group, we note that
	$\D(p,q)_{(i,j,a,b)}$ covers $\D(4,q)_{(i,j,a,b)}$, which in turn covers $\D(4,6)_{(i,j,a,b)}$. Since $\D(4,6)_{(i,j,a,b)}$
	is a string C-group, two applications of \pref{quo-crit} shows that so is $\D(p,q)_{(i,j,a,b)}$.
	\end{proof}

	It remains to show that there is a tight non-orientably regular polyhedron of type $\{4, 3k\}$ whenever $3k$
	is odd. This follows directly from \cite[Thm. 5.1]{tight2}. Combined with \tref{parameters} and \lref{non-or-existence},
	we obtain the following result:
										
	 \begin{theorem}
	 There is a tight non-orientably regular polyhedron of type $\{p, q\}$ if and only if
	 \begin{enumerate}
	 \item $p = 4$ and $q = 3k$, or
	 \item $p = 4r$ and $q = 6k$, with $r > 1$ odd and $k$ odd, or
	 \item $q = 4$ and $p = 3k$, or
	 \item $q = 4r$ and $p = 6k$, with $r > 1$ odd and $k$ odd.
	 \end{enumerate}
	 Furthermore, in each case there is a unique such polyhedron up to isomorphism,
	 except in the cases where $p = 4$ and $q = 6k$, or $q = 4$ and $p = 6k$,
	 in which case there are two isomorphism types.
	 \end{theorem}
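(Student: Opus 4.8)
The plan is to stitch together the structural results already obtained. By \tref{second-rel}, every tight non-orientably regular polyhedron $\calP$ of type $\{p,q\}$ has $\G(\calP) = \D(p,q)_{(i,j,a,b)}$ or $\G(\calP^{\delta}) = \D(q,p)_{(i,j,a,b)}$ for some parameters, and \tref{parameters} pins down exactly which $(p,q)$ and which tuples $(i,j,a,b)$ can occur; \lref{non-or-existence} and \cite[Thm.~5.1]{tight2} then supply the polyhedra realizing those types. So the proof splits into a necessity half (driven by \tref{parameters}) and a sufficiency half (driven by the two existence statements), glued together by the duality $\calP \mapsto \calP^{\delta}$.

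For necessity, let $\calP$ be tight non-orientably regular of type $\{p,q\}$. If $\G(\calP) = \D(p,q)_{(i,j,a,b)}$, then parts (a) and (b) of \tref{parameters} say that $p$ is an odd multiple of $4$ and $q$ is a multiple of $3$, and moreover $q$ is an odd multiple of $6$ whenever $p \neq 4$; this is precisely cases (1) and (2) of the statement. Otherwise $\G(\calP^{\delta}) = \D(q,p)_{(i,j,a,b)}$, and since $\calP^{\delta}$ is again tight non-orientably regular, of type $\{q,p\}$, applying \tref{parameters} to it and then reading the conclusion back as a condition on $\{p,q\}$ yields cases (3) and (4). Hence the four listed cases are necessary.

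For sufficiency I would treat the cases in the grouping that the existence lemmas provide. Case (2), together with the $p = 4$ instances of the form $q = 6k$ with $k$ odd, is delivered directly by \lref{non-or-existence}, which produces a string C-group of type $\{p,q\}$; it is non-orientably regular because the defining relation $\s_2^{-1}\s_1 = \s_1^i \rho_1 \s_2^j$ of $\D(p,q)_{(i,j,a,b)}$ has odd length (one may alternatively invoke \pref{quos-nonor} via a suitable covering). The remaining instances of case (1), namely $p = 4$ with $q$ an odd multiple of $3$, are handled by \cite[Thm.~5.1]{tight2}. Cases (3) and (4) then follow by passing to the duals of the polyhedra just constructed. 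This establishes the ``if and only if'' statement. For the uniqueness count I would simply read off the number of admissible parameter tuples from \tref{parameters}: once $p$ and $q$ are fixed, part (e) forces $a = 1+p/2$, part (f) forces $b = 2$, and part (c) forces a single value of $i$, while part (d) forces $j = 1+q/2$ unless $p = 4$. When $p \neq 4$ all four parameters are thereby determined, so there is exactly one polyhedron (a string C-group determines its polyhedron up to isomorphism, \cite[Thm.~2E11]{arp}); when $p = 4$ and $q$ is odd, $2j \equiv 2 \pmod q$ forces $j = 1$, again one polyhedron. The only surviving freedom is $p = 4$ with $q = 6k$, where $j \in \{1,\, 1+q/2\}$; there \lref{non-or-existence} exhibits both $\D(4,q)_{(2,1,3,2)}$ and $\D(4,q)_{(2,1+q/2,3,2)}$ as genuine tight polyhedra, so there are exactly two isomorphism types, and the dual statement covers $q = 4$, $p = 6k$.

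The step I expect to be most delicate is precisely this last one: confirming that the two permitted values of $j$ really give distinct polyhedra rather than isomorphic ones, which rests on the $\{4,6\}$ base-case computation inside \lref{non-or-existence}. The rest is bookkeeping — carrying the parity conditions of \tref{parameters} consistently through the duality (watching the substitution $\s_k \mapsto \s_{3-k}^{-1}$ and the swap of $p$ and $q$), and checking that every $(p,q)$ allowed by \tref{parameters} is matched by one of the two existence statements. The reductions to the finite base cases go through the normal subgroups $\langle \s_1^4 \rangle$ and $\langle \s_2^6 \rangle$ from \lref{s14-s26-normal}, and the GAP verifications cited in \lref{non-or-existence} and in the proof of \tref{parameters} are what ultimately close those finite checks.
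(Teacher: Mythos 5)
Your proposal matches the paper's proof essentially verbatim: the paper likewise assembles this theorem from \tref{parameters} (necessity, plus the parameter count that gives uniqueness, with the extra choice of $j$ arising only when $p=4$ and $q$ is even), \lref{non-or-existence} (existence for $p=4r$, $q=6k$ with $r,k$ odd, including the two non-isomorphic groups when $p=4$), \cite[Thm.~5.1]{tight2} (existence for type $\{4,3k\}$ with $3k$ odd), and duality for the remaining two cases. There is no substantive difference in approach.
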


Now as in the case of tight orientably regular polyhedra, we consider the core of $\langle \sigma_i \rangle$ in the automorphism groups of tight non-orientably regular polyhedra.
Whenever $\langle \s_1 \rangle$ and $\langle \s_2 \rangle$ are not core-free in $\Delta(p,q)_{i,j,a,b}$, we can take the quotients of $\Delta$ by the two cores to obtain two tight non-orientably regular polyhedra.
As in the orientable case, we could reconstruct $\Delta(p,q)_{i,j,a,b}$ from these two quotients. The difficulty is that, unlike in the orientable case, some polyhedra with $\langle \sigma_2 \rangle$ core-free might have multiple edges.

Let us show how this can happen. \cref{fix-all-verts} showed that, if $\calP$ is orientably regular and if some 
$\varphi \in \langle \s_2 \rangle$ fixes one neighbor of $v$, then $\varphi$ fixes every vertex. However, this
need not be true in the non-orientable case. In particular, consider a polyhedron with double edges and with
$\varphi = \s_2^{q/2}$. Then it can happen that $\varphi$ acts as a rotation at $v$ but as a reflection through
one of its neighbors $u$. This is illustrated in Figure \ref{fig:RotRef}, where the two flags labeled $\Psi$ are identified, and $\varphi$ maps flag $\Phi$ into flag $\Psi$ by a half-turn around $v$, but also by a reflection by a vertical line through $u$. This gives us a polyhedron with double edges, even though $\langle \s_2 \rangle$ is core-free.

\begin{figure}
\begin{center}
\includegraphics[width=7cm, height=2.5cm]{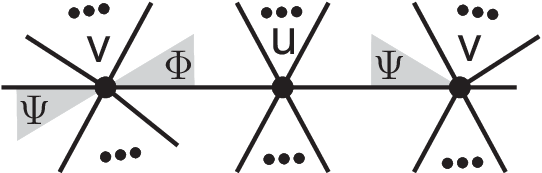}
\end{center}
\caption{Polyhedron with double edges but $\langle \sigma_2 \rangle$ core-free \label{fig:RotRef}}
\end{figure}

Examples of this situation are the duals of the polyhedra with automorphism groups $\Delta(4,q)_{2,1,3,2}$ for any $q$ divisible by $3$. The representation as maps of these polyhedra for $q\in \{3,6,9\}$ are shown in Figure \ref{fig:Grek} (see for example \cite{grek} for earlier appearance of these polyhedra as maps on surfaces). It is easy to see that in the polyhedron with group $\Delta(4,q)_{2,1,3,2}$, each square face shares opposite edges with another square, as shown in the upper part of Figure \ref{fig:Grek}. This implies that the dual has double edges. On the other hand, it is easy to verify that $\langle \sigma_1 \rangle$ is core-free in $\Delta(4,q)_{2,1,3,2}$.

\begin{figure}
\begin{center}
\includegraphics[width=7cm, height=4cm]{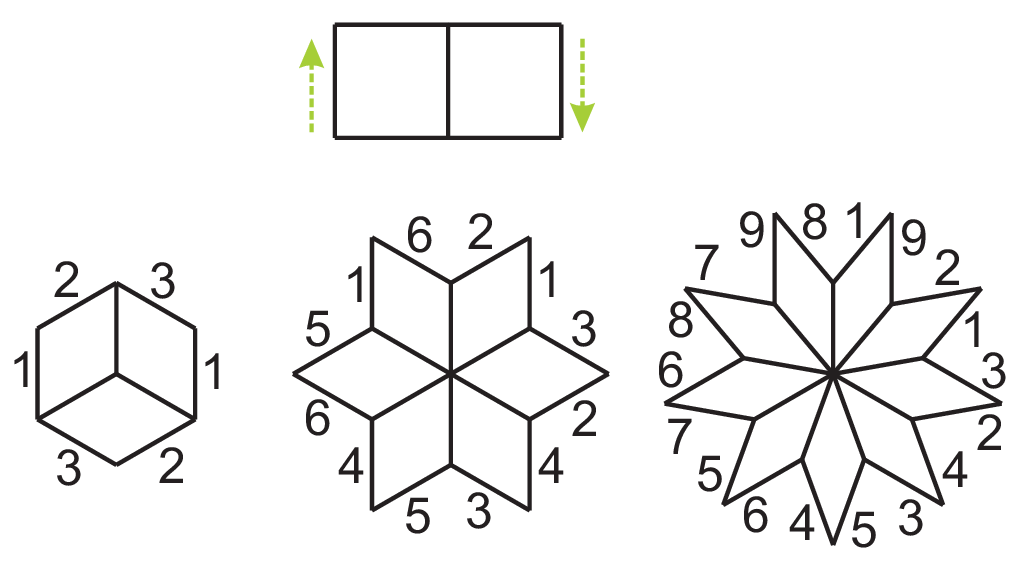}
\end{center}
\caption{Polyhedra with automorphism groups $\Delta(4,q)_{2,1,3,2}$ for $q \in \{3,6,9\}$ \label{fig:Grek}}
\end{figure}

\bibliographystyle{amsplain}
%\bibliography{gabe}

\providecommand{\bysame}{\leavevmode\hbox to3em{\hrulefill}\thinspace}
\providecommand{\MR}{\relax\ifhmode\unskip\space\fi MR }
% \MRhref is called by the amsart/book/proc definition of \MR.
\providecommand{\MRhref}[2]{%
  \href{http://www.ams.org/mathscinet-getitem?mr=#1}{#2}
}
\providecommand{\href}[2]{#2}

\ \\
Gabe Cunningham \\
{\em  University of Massachusetts Boston \\
Boston, Massachusetts. \\
e-mail: }{\tt gabriel.cunningham@gmail.com}

\ \\
Daniel Pellicer \\
{\em  Centro de Ciencias Matem\'aticas, \\
Universidad Nacional Aut\'onoma de M\'exico, \\
Morelia, Mexico. \\
e-mail: }{\tt pellicer@matmor.unam.mx}

\end{document}